\theoremstyle{plain}                 
\newtheorem{theorem}{Theorem}[section]     
\newtheorem{proposition}[theorem]{Proposition} 
\newtheorem{corollary}[theorem]{Corollary}     
\newtheorem{lemma}[theorem]{Lemma}        
\theoremstyle{definition}           
\newtheorem{definition}[theorem]{Definition}    
\theoremstyle{remark}       
\newtheorem{remark}[theorem]{Remark}    
\newcommand\numberthis{\addtocounter{equation}{1}\tag{\theequation}}
\newcommand{\corc}[1]{\bigg{\langle} \, #1 \,  \bigg{\rangle} ^{\circ}}
\newcommand{\cord}[1]{\bigg{\langle} \, #1 \, \bigg{\rangle} ^{\bullet}}
\def\cE{\mathcal{E}}
\def\cD{\mathcal{D}}
\DeclareRobustCommand{\Stirling}{\genfrac\{\}{0pt}{}}
\DeclareRobustCommand{\stirling}{\genfrac[]{0pt}{}}
\DeclareMathOperator\Sinh{sinh}
\DeclareMathOperator*\Res{Res}
\newcommand{\GL}{\mathord{\mathrm{GL}}}
\DeclareMathOperator{\End}{End}
\DeclareMathOperator{\ad}{ad}
\newcommand{\+}{\! + \!}
\newcommand{\mi}{\! - \!}
\newcommand{\cV}{\mathcal{V}}
\def\Dh{\mathcal{D}^{(h)}}
\def\Z{\mathbb{Z}}
\def\C{\mathbb{C}}
\def\CP1{\mathbb{C}\mathrm{P}^1}
\begin{document}
\title[Polynomiality of monotone orbifold Hurwitz numbers]{Quasi-polynomiality of monotone orbifold Hurwitz numbers and Grothendieck's dessins d'enfants}

\author[R.~Kramer]{R.~Kramer}
\address{R.~K.: Korteweg-de Vries Institute for Mathematics, University of Amsterdam, Postbus 94248, 1090 GE Amsterdam, The Netherlands}
\email{R.Kramer@uva.nl}

\author[D.~Lewanski]{D.~Lewanski}
\address{D.~L.: Korteweg-de Vries Institute for Mathematics, University of Amsterdam, Postbus 94248, 1090 GE Amsterdam, The Netherlands}
\email{D.Lewanski@uva.nl}

\author[S.~Shadrin]{S.~Shadrin}
\address{S.~S.: Korteweg-de Vries Institute for Mathematics, University of Amsterdam, Postbus 94248, 1090 GE Amsterdam, The Netherlands}
\email{S.Shadrin@uva.nl}

\begin{abstract}
We prove quasi-polynomiality for monotone and strictly monotone orbifold Hurwitz numbers. The second enumerative problem is also known as enumeration of a special kind of Grothendieck's dessins d'enfants or $r$-hypermaps. These statements answer positively two conjectures proposed by Do-Karev and Do-Manescu.
We also apply the same method to the usual orbifold Hurwitz numbers and obtain a new proof of the quasi-polynomiality in this case. 

In the second part of the paper we show that the property of quasi-polynomiality is equivalent in all these three cases to the property that the $n$-point generating function has a natural representation on the $n$-th cartesian powers of a certain algebraic curve. These representations are necessary conditions for the Chekhov-Eynard-Orantin topological recursion.  
\end{abstract}

\maketitle

\tableofcontents

\section{Introduction}

This paper is devoted to a combinatorial and analytic study of several kinds of orbifold Hurwitz numbers. The three kinds of orbifold Hurwitz numbers that we consider in this paper are the monotone, the strictly monotone, and the usual ones. Note that the theory of the strictly monotone orbifold Hurwitz numbers is equivalent to the enumeration of hypermaps on two-dimensional surfaces, or, in other words, to the enumeration of some special type of Grothendieck's dessins d'enfants. 

This type of combinatorial objects is important both for purely combinatorial reasons and also because of the numerous relations that these numbers and their generating functions have to the intersection theory of the moduli spaces of curves, matrix models and topological recursion, integrable systems, and low-dimensional topology. We will not make any attempt to survey this very rich theory, and we refer the interested reader to~\cite{ALS, BHLM, BS, DoDyerMathews, DoKarev, DLN, DoManescu, DM, DLPS, GioWa, GGJ1, GGJ2, GGJ3, H, HO, KZ, KZ2, MSS, ZJ, Z} and references therein. 

The Hurwitz numbers of these three types can be efficiently realized as the vacuum expectations in the semi-infinite wedge formalism. These formulae will be the starting point for our paper, and we use them as the definitions of the corresponding Hurwitz numbers. The equivalence with the usual definitions is established via the character formula, and we refer to~\cite{ALS} for that. 

Recall that the Hurwitz numbers that we consider, $h^{\circ,r,\leq}_{g;\vec\mu}$, $h^{\circ,r,<}_{g;\vec\mu}$, and $h^{\circ,r}_{g;\vec\mu}$, depend on a genus parameter $g\geq 0$, and a tuple of $n\geq 1$ positive integers $\vec\mu=(\mu_1,\dots,\mu_n)$. It is a natural combinatorial question how these numbers depend on the parameters $\mu_1,\dots,\mu_n$. We prove in this paper that for $2g-2+n>0$ the dependence on the parameters can be described in a very explicit way. Namely, let us represent any integer $a$ as $r[a]+\langle a \rangle$, $0\leq \langle a\rangle \leq r-1$, and let $\langle \vec\mu \rangle :=(\langle \mu_1 \rangle,\dots,\langle \mu_n \rangle)$. We will use this notation throughout the article. We prove that there exist polynomials $P_{\leq}^\eta$, $P_{<}^\eta$, and $P^\eta$ of degree $3g-3+n$ in $n$ variables, whose coefficients depend on $\eta$ and also on $g$ and $r$, such that 
\begin{align}
h^{\circ,r,\leq}_{g;\vec\mu} & = P_\leq^{\langle \vec\mu \rangle} (\mu_1,\dots,\mu_n) \cdot \prod_{i=1}^n \binom{\mu_i+{[\mu_i]}}{\mu_i};  \\
h^{\circ,r,<}_{g;\vec\mu} & = P_<^{\langle \vec\mu \rangle} (\mu_1,\dots,\mu_n) \cdot \prod_{i=1}^n \binom{\mu_i-1}{[\mu_i]};  \\
h^{\circ,r}_{g;\vec\mu} & = P^{\langle \vec\mu \rangle} (\mu_1,\dots,\mu_n) \cdot \prod_{i=1}^n \frac{\mu_i^{[\mu_i]}}{[\mu_i]!}. 
\end{align}
We call this property quasi-polynomiality. The proof is purely combinatorial and uses some properties of the analogues of the $\mathcal{A}$-operators of Okounkov and Panharipande~\cite{OP} in the semi-infinite wedge formalism. This statement was known for the usual orbifold Hurwitz numbers \cite{BHLM, DLPS, DLN}. In this case we give a new proof. In the cases of monotone and strictly monotone orbifold Hurwitz numbers, this property was conjectured by Do and Karev in~\cite{DoKarev} and Do and Manescu in~\cite{DoManescu}, respectively, and no proof was known.

\subsection{Quasi-polynomiality} Let us explain why the property of being quasi-polynomial is of crucial importance for these Hurwitz numbers, as well as some further results of this paper. For that, we recall several connections of the Hurwitz theory to other areas of mathematics. 

First of all,  there is a connection to the spectral curve topological recursion in the sense of Chekhov-Eynard-Orantin (CEO). This means that the corresponding Hurwitz numbers can be obtained as the coefficients of some particular expansion of the correlation differentials defined on the Cartesian products of some fixed Riemann surface called the spectral curve. These differentials are produced by the CEO topological recursion procedure from a fairly small input data. The input data consists of a curve $\Sigma$, a symmetric bi-differential $B$ defined on $\Sigma\times \Sigma$ with a double pole on the diagonal with biresidue $1$, and two meromorphic functions, $x$ and $y$, defined on $\Sigma$. This allows us to compute recursively the correlation differentials. We need one more piece of data \textemdash\, the variable in which we want to expand the correlation differentials in order to obtain as coefficients the solutions of the combinatorial problem.

In our cases, the data is the following. The curve $\Sigma$ is always $\CP1$ in all three cases. We denote by $z$ a global coordinate on $\CP1$. In the case of $\CP1$ the bi-differential $B(z_1,z_2)$ is uniquely determined by its properties and is equal to $dz_1dz_2/(z_1-z_2)^2$. The functions $x$ and $y$ are the following:
\begin{align}
x& =z(1-z^r), & y & =z^{r-1}/(z^r-1) & \text{in the monotone case;} \\
x& =z^r+z^{-1}, & y & =z & \text{in the strictly monotone case;} \\
x& =\log z - z^r, & y & =z^r & \text{in the usual case.} 
\end{align}
The correlation differentials obtained by the CEO recursion in these cases should be expanded
\begin{align}
\text{in the variable } & x & \text{near } & x=0 & \text{in the monotone case;} \\
\text{in the variable } & x^{-1} & \text{near } & x=\infty & \text{in the strictly monotone case;} \\
\text{in the variable } & e^x & \text{near } & e^x=0 & \text{in the usual case.} 
\end{align}

The topological recursion is proved in the case of the usual orbifold Hurwitz numbers in~\cite{BHLM,DLN}, in the case of strictly monotone Hurwitz numbers it was conjectured in~\cite{DoManescu} and combinatorially proved in~\cite{DOPS}, based on the original derivation of topological recursion in~\cite{CEO} in the case of the two-matrix model. In the case of monotone orbifold Hurwitz numbers only the case $r=1$ has been proved in~\cite{DoDyerMathews}, and a general conjecture was made in~\cite{DoKarev}.

The relation between quasi-polynomiality and the topological recursion is the following. We prove in this paper that a sequence of numbers depending on a tuple $(\mu_1,\dots,\mu_n)$ can be represented as a polynomial in $\mu_1,\dots,\mu_n$ times the non-polynomial factor $\prod_{i=1}^n \binom{\mu_i+{[\mu_i]}}{\mu_i}$ (respectively, $\prod_{i=1}^n \binom{\mu_i-1}{[\mu_i]}$, $\prod_{i=1}^n \mu_i^{[\mu_i]}/[\mu_i]!$) if and only if it can be represented as an expansion of a  special kind of symmetric $n$-differential on the curve $x =z(1-z^r)$ (respectively, 
$x =z^r+z^{-1}$, $x =\log z - z^r$)  in the variable $x$ (respectively, $x^{-1}$, $e^x$).

In the case of the usual orbifold Hurwitz numbers it was already known and used  in~\cite{DLN,BHLM,DLPS}, and, in a slightly different situation, in~\cite{SSZ}. In the case of monotone and strictly monotone orbifold Hurwitz numbers this equivalence was neither explicitly stated nor proved, though it is implicitly suggested in a conjectural form in~\cite{DoKarev} for the monotone and in~\cite{DoManescu} for the strictly monotone cases. Note that since the topological recursion is proved for the strictly monotone Hurwitz numbers independently~\cite{CEO,DOPS}, this equivalence implies the quasi-polynomiality as well.

There are also two unstable cases that have to be studied separately: $(g,n)=(0,1)$ and $(0,2)$. In the case $(g,n)=(0,1)$ (respectively, $(g,n)=(0,2)$) the topological recursion requires that the generating function of the corresponding Hurwitz numbers is given by the expansion of $ydx$ (respectively, $B(z_1,z_2)-B(x_1,x_2)$). For $(g,n)=(0,1)$ this property has been proved in all three cases, in~\cite{DoKarev} for the monotone, in~\cite{DoManescu} for the strictly monotone and in~\cite{DLN,BHLM} for the usual orbifold Hurwitz numbers. Basically, such a representation for the $(g,n)=(0,1)$ generating function is a way to guess a spectral curve for the corresponding combinatorial problem. For $(g,n)=(0,2)$ this property has been proved for strictly monotone and usual orbifold Hurwitz numbers (indeed, the topological recursion is proved in both cases), but it was not known for the monotone case. We prove this in appendix \ref{sec:Unstable}. 

Let us remark that this set of properties (namely, representation of the $(0,1)$ generating function as an expansion of $ydx$, the $(0,2)$ generating function as an expansion of $B(z_1,z_2)-B(x_1,x_2)$, and the quasi-polynomiality property for $2g-2+n>0$) is required for the approach to the topological recursion in~\cite{DMSS}. Once these properties are established, the topological recursion appears to be a Laplace transform of some much easier recursion property of the corresponding combinatorial problem. 

The other important connection for all three Hurwitz theories that we consider here is their relations to the intersection theory of the moduli spaces of curves. It appears that the coefficients of the polynomials in the quasi-polynomial representation of the $n$-point functions can be represented in terms of some intersection numbers on the moduli spaces of curves. This statement is proved for usual Hurwitz numbers for $r=1$ in~\cite{ELSV} and for any $r$ in~\cite{JPT}. 

In general, assume we know that being quasi-polynomial is equivalent to being an expansion of a symmetric differential of certain type. Then in this situation there is an equivalence between the topological recursion and representation in terms of the intersection theory of the moduli spaces of curves. The intersection numbers in this case appear to be the correlators of a certain cohomological field theory, possibly with a non-flat unit. This point of view on topological recursion was first suggested by Eynard in~\cite{Ey} and worked out in detail in many examples, see e.~g.~\cite{DOSS,DKOSS,SSZ,LPSZ}. 

In particular, the cohomological field theory for the case of the strictly monotone orbifold Hurwitz numbers is described in~\cite{DNOPS}. For the monotone orbifold Hurwitz numbers the intersection number formula was derived so far only the case $r=1$, see~\cite{ALS,DoKarev}, and it is based on the proof of the topological recursion in~\cite{DoDyerMathews}.

\subsection{Organization of the paper} In section~\ref{sec:semiinfty} we briefly recall the necessary background on semi-infinite wedge formalism. In section~\ref{sec:symmstirling} we review the interplay between symmetric polynomials and Stirling numbers, together with their generating function. In section~\ref{sec:Aoperators} we define the $\mathcal{A}$-operators and we express the generating series for monotone and strictly monotone Hurwitz numbers in terms of $\mathcal{A}$-operators acting on the Fock space. The main result of the paper is stated and proved in section~\ref{sec:Poly}. In section~\ref{sec:Correlation} the polynomiality properties are proved to be equivalent to the analytic properties that are necessary for the Chekhov-Eynard-Orantin topological recursion. 
Finally, in appendix~\ref{sec:Unstable} we perform the computations for the unstable $(0,1)$, as an example of the usage of the $\mathcal{A}$-operators, and we prove a formula relating the $(0,2)$-generating function for the monotone orbifold Hurwitz numbers to the expansion of the Bergman kernel.

\subsection{Acknowledgments} We would like to thank A.~Alexandrov, N.~Do, P.~Dunin-Barkowski, M.~Karev, and A.~Popolitov for interesting discussions and very useful remarks. The authors are supported by the Netherlands Organization for Scientific Research.

\section{Semi-infinite wedge formalism}\label{sec:semiinfty}

In this section we briefly recall the semi-infinite wedge formalism. It is nowadays a standard tool in Hurwitz theory, with many good introductions to it. We refer the reader, for instance, to~\cite{Joh} and~\cite{ALS} and references therein for a more complete exposition.

Let $V$ be an infinite-dimensional complex vector space with a basis labeled by half-integers. Denote the basis vector labeled by $m/2$ by $\underline{m/2}$, so $V = \bigoplus_{i \in \Z + \frac{1}{2}} \C \underline{i}$.

\begin{definition}\label{DEFSEMIINF}
The semi-infinite wedge space $\bigwedge^{\frac{\infty}{2}}(V) = \mathcal{V}$ is defined to be the span of all of the semi-infinite wedge products of the form
\begin{equation}\label{wedgeProduct}
\underline{i_1} \wedge \underline{i_2} \wedge \cdots
\end{equation}
for any decreasing sequence of half-integers $(i_k)$ such that there is an integer $c$ with $i_k + k - \frac{1}{2} = c$ for $k$ sufficiently large. The constant $c$ is called the \textit{charge}. We give $\mathcal{V}$ an inner product $(\cdot,\cdot)$ declaring its basis elements to be orthonormal.
\end{definition}

\begin{remark}
By definition \ref{DEFSEMIINF} the charge-zero subspace $\mathcal{V}_0$ of $\mathcal{V}$ is spanned by semi-infinite wedge products of the form 
$$
\underline{\lambda_1 - \frac{1}{2}} \wedge \underline{\lambda_2 - \frac{3}{2}} \wedge \cdots
$$
for some integer partition~$\lambda$. Hence we can identify integer partitions with the basis of this space:
\begin{equation*}
\mathcal{V}_0 = \bigoplus_{n \in \mathbb{N} } \bigoplus_{\lambda\,  \vdash\, n} \C v_{\lambda}
\end{equation*}
\end{remark}

The empty partition $\emptyset$ plays a special role.
We call $$v_{\emptyset} = \underline{-\frac{1}{2}} \wedge \underline{-\frac{3}{2}} \wedge \cdots$$ the vacuum vector and we denote it by $|0\rangle$. Similarly we call the covacuum vector its dual with respect to the scalar product $(\cdot,\cdot)$ and we denote it by $\langle0|$.

\begin{definition}
 The \emph{vacuum expectation value} or \emph{disconnected correlator} $\langle \mathcal{P}\rangle^{\bullet}$ of an operator~$\mathcal{P}$ acting on $\mathcal{V}_0$ is defined to be:
\begin{equation*}
\langle \mathcal{P}\rangle^{\bullet} \coloneqq (|0\rangle , \mathcal{P} |0\rangle) \eqqcolon \langle 0 |\mathcal{P}|0 \rangle
\end{equation*}
We also define the functions
\begin{equation}
\zeta(z)=e^{z/2} - e^{-z/2} = 2 \Sinh(z/2)
\end{equation}
and 
\begin{equation}
\mathcal{S}(z) = \frac{\zeta(z)}{z} = \frac{\Sinh(z/2)}{z/2}.
\end{equation}
\end{definition}

\begin{definition} This is the list of operators we will use:
\begin{enumerate}
\item[i)] For $k$ half-integer the operator
$\psi_k \colon (\underline{i_1} \wedge \underline{i_2} \wedge \cdots) \ \mapsto \ (\underline{k} \wedge \underline{i_1} \wedge \underline{i_2} \wedge \cdots)$
increases the charge by $1$. Its adjoint operator $\psi_k^*$ with respect to~$(\cdot,\cdot)$
 decreases the charge by $1$.
 \item[ii)] The normally ordered products of $\psi$-operators
\begin{equation}
E_{i,j} \coloneqq \begin{cases}\psi_i \psi_j^*, & \text{ if } j > 0 \\
-\psi_j^* \psi_i & \text{ if } j < 0\ .\end{cases} 
\end{equation}
preserve the charge and hence can be restricted to $\cV_0$ with the following action. For $i\neq j$, $E_{i,j}$ checks if $v_\lambda$ contains $\underline{j}$ as a wedge factor and if so replaces it by $\underline{i}$. Otherwise it yields~$0$. In the case $i=j > 0$, we have $E_{i,j}(v_\lambda) = v_\lambda$ if $v_\lambda$ contains $\underline{j}$ and $0$ if it does not; in the case  $i=j < 0$, we have $E_{i,j}(v_\lambda) = - v_\lambda$ if $v_\lambda$ does not contain $\underline{j}$ and $0$ if it does. This gives a projective representation of \( \mathcal{A}_\infty \), the Lie algebra of complex \( \Z \times \Z \) matrices with only finitely many non-zero diagonals~\cite{Joh}. 
 \item[iii)] The diagonal operators are assembled into the operators
\begin{equation}
\mathcal{F}_n \coloneqq \sum_{k\in\Z+\frac12} \frac{k^n}{n!} E_{k,k} 
\end{equation}
The operator $C \coloneqq \mathcal{F}_0$ is called \emph{charge operator}, while the operator $E \coloneqq \mathcal{F}_1$ is called \emph{energy operator}. Note that $\mathcal{F}_0$ vanishes identically on $\cV_0$.
We say that an operator $\mathcal{P}$ on $\cV_0$ is of energy $c\in\mathbb{Z}$ if
\begin{equation}
[\mathcal{P}, E] = c\, \mathcal{P}
\end{equation}
The operator $E_{i,j}$ has energy $j-i$, hence all the $\mathcal{F}_n$'s have zero energy. Operators with positive energy annihilate the vacuum while negative energy operators are annihilated by the covacuum.
\item[iv)] For $n$ any integer and $z$ a formal variable one has the energy $n$ operators:
\begin{equation}
\mathcal{E}_n(z) = \sum_{k \in \Z + \frac12} e^{z(k - \frac{n}{2})} E_{k-n,k} + \frac{\delta_{n,0}}{\zeta(z)}  .
\end{equation}
\item[v)] For $n$ any nonzero integer one has the energy $n$ operators:
\begin{equation}
\alpha_n = \mathcal{E}_n(0) = \sum_{k \in \Z + \frac12} E_{k-n,k}
\end{equation}
These \( \alpha_n \) can also be interpreted as elements of \( \GL (V)\), in which case \( \alpha_n^{-1} = \alpha_{-n} \).
\end{enumerate}
\end{definition}

The commutation formula for $\mathcal{E}$ operators reads:
\begin{equation}\label{eq:commE}
  \left[\cE_a(z),\cE_b(w)\right] =
\zeta\left(\det  \left[
\begin{matrix}
  a & z \\
b & w
\end{matrix}\right]\right)
\,
\cE_{a+b}(z+w)
\end{equation}
and in particular \( [\alpha_k, \alpha_l ] = k \delta_{k+l,0}\).\par
Note that $\cE_k(z)\big|0\big\rangle = 0$ if $k > 0$, while $\cE_0(z)\big|0\big\rangle = {\zeta(z)}^{-1}\big|0\big\rangle$. We will also use the $\cE$ operator without the correction in energy zero, i.e.
\begin{equation*}
\tilde{\mathcal{E}_0}(z) = \sum_{k \in \Z + \frac12} e^{zk} E_{k,k} = \sum_{n=0}^{\infty} \mathcal{F}_n z^n = C + Ez + \mathcal{F}_2 z^2 + \dots
\end{equation*}
which annihilates the vacuum and obeys the same commutation rule as $\mathcal{E}_0$.

\section{Symmetric polynomials and Stirling numbers}\label{sec:symmstirling}

In this section we recollect some combinatorial notions used in the rest of the paper. In particular we recall here some basic facts on homogeneous symmetric polynomials and Stirling numbers, and their interconnection.

\subsection{Symmetric polynomials}

\begin{definition}
Let \( X = \{ x_1, \dotsc, x_n\} \) be a finite set of variables. The \emph{complete symmetric polynomials \( h_k\)} and the \emph{elementary symmetric polynomials \( \sigma_k \)} on \( X \) are defined as follows:
\begin{align}
h_k (X) &=\sum_{1 \leq i_1 \leq i_2 \leq \dotsb \leq i_k \leq n} x_{i_1} \dotsb x_{i_k} \\
\sigma_k (X) &=\sum_{1 \leq i_1 < i_2 < \dotsb < i_k \leq n} x_{i_1} \dotsb x_{i_k}
\end{align}
\end{definition}
The properties of these functions are well-documented, see e.g. \cite{McD}. We will list some useful properties.
\begin{lemma}
The generating functions of the complete and elementary symmetric polynomials are as follows:
\begin{align}
\sum_{k=0}^\infty h_k (x_1, \dotsc, x_n) u^k &= \prod_{i=1}^n \frac{1}{1-ux_i}\\
\sum_{k=0}^\infty \sigma_k (x_1, \dotsc, x_n) u^k &= \prod_{i=1}^n (1+ux_i)
\end{align}
\end{lemma}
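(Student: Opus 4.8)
The plan is to establish both identities as equalities of formal power series in $u$ with coefficients in $\C[x_1,\dots,x_n]$, by expanding the right-hand products and reading off the coefficient of each power $u^k$. Since two formal power series agree if and only if all their coefficients agree, it suffices in each case to match the coefficient of $u^k$ with the asserted symmetric polynomial.

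For the elementary symmetric polynomials I would expand the finite product $\prod_{i=1}^n (1+ux_i)$ by the distributive law. Each monomial produced corresponds to a choice, for every factor indexed by $i$, of either the summand $1$ or the summand $ux_i$; selecting $ux_i$ precisely for the indices $i_1 < i_2 < \dots < i_k$ contributes $u^k x_{i_1}\cdots x_{i_k}$. Summing over all $k$-element subsets $\{i_1,\dots,i_k\}$ of $\{1,\dots,n\}$ yields exactly $u^k\, \sigma_k(x_1,\dots,x_n)$ by the definition of $\sigma_k$. Because there are only $n$ variables, this coefficient vanishes for $k>n$, so the finite sum may harmlessly be written as an infinite one.

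For the complete symmetric polynomials I would first expand each factor as a geometric series, $\frac{1}{1-ux_i}=\sum_{j\geq 0} u^j x_i^j$, and then multiply the $n$ resulting series. A typical term of the product is obtained by choosing an exponent $j_i\geq 0$ for each $i$ and contributes $u^{j_1+\dots+j_n}\, x_1^{j_1}\cdots x_n^{j_n}$. Grouping these terms according to the total degree $k=j_1+\dots+j_n$, the coefficient of $u^k$ becomes the sum of all degree-$k$ monomials in $x_1,\dots,x_n$, which is precisely $h_k(x_1,\dots,x_n)$ written in its defining form $\sum_{1\leq i_1\leq\dots\leq i_k\leq n} x_{i_1}\cdots x_{i_k}$.

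Since both manipulations are purely formal rearrangements of well-defined sums, there is no genuine obstacle. The only point that deserves a word of justification is the interchange of the finite product with the $n$ infinite geometric series in the second identity; this is legitimate because, for each fixed total degree $k$, only finitely many tuples $(j_1,\dots,j_n)$ with $j_1+\dots+j_n=k$ contribute, so every coefficient of $u^k$ is a \emph{finite} sum and the collection of coefficients is well defined.
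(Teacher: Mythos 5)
Your proof is correct. The paper itself offers no proof of this lemma -- it simply cites the standard reference (MacDonald's book on symmetric functions) -- and your argument is precisely the standard one that reference contains: expanding $\prod_{i=1}^n(1+ux_i)$ by distributivity to read off $\sigma_k$, and multiplying the geometric series $\sum_{j\ge 0}u^jx_i^j$ to read off $h_k$, with the correct observation that each coefficient of $u^k$ is a finite sum so the formal manipulations are legitimate.
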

\begin{corollary}
For any finite set of variables \( X \),
\begin{equation}\label{eq:sympolrel}
\sum_{k=0}^\infty h_k (X ) u^k \sum_{l=0}^\infty \sigma_l (X) (-u)^l = 1
\end{equation}
\end{corollary}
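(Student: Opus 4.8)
The plan is to read this identity off directly from the two generating functions established in the preceding lemma, since the corollary is merely their product after a sign substitution. First I would recall that the lemma gives
\begin{equation}
\sum_{k=0}^\infty h_k (x_1, \dotsc, x_n) u^k = \prod_{i=1}^n \frac{1}{1-ux_i},
\qquad
\sum_{l=0}^\infty \sigma_l (x_1, \dotsc, x_n) u^l = \prod_{i=1}^n (1+ux_i),
\end{equation}
both understood as identities of formal power series in $u$ with coefficients in the polynomial ring on $X$.

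The key step is to substitute $-u$ for $u$ in the elementary-symmetric generating function, which turns each factor $(1+ux_i)$ into $(1-ux_i)$, so that
\begin{equation}
\sum_{l=0}^\infty \sigma_l (X) (-u)^l = \prod_{i=1}^n (1-ux_i).
\end{equation}
Multiplying this against the complete-symmetric generating function, the product telescopes factor by factor, since $\prod_{i=1}^n (1-ux_i)^{-1} \cdot \prod_{i=1}^n (1-ux_i) = 1$ in the ring of formal power series. This yields \eqref{eq:sympolrel} immediately.

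There is essentially no obstacle here: the only points meriting care are that the manipulation takes place at the level of formal power series in $u$ (so that the infinite products and their inverses are well defined termwise), and that the substitution $u \mapsto -u$ is legitimate for formal series, which it is. I would note in passing that comparing the coefficient of $u^N$ on both sides recovers the classical Newton-type recursion $\sum_{k=0}^N (-1)^k \sigma_k(X) h_{N-k}(X) = \delta_{N,0}$ between the $h_k$ and $\sigma_k$, but the generating-function argument above is the cleanest route to the stated identity.
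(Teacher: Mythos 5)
Your proof is correct and is exactly the argument the paper intends: the corollary is stated as an immediate consequence of the preceding lemma, obtained by substituting $u \mapsto -u$ in the elementary-symmetric generating function and cancelling the products $\prod_{i=1}^n (1-ux_i)^{-1}\cdot\prod_{i=1}^n(1-ux_i)=1$ as formal power series.
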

The following lemma is an easy consequence of the definitions, and can be proved by induction on the number of arguments.
\begin{lemma}
If the variables in a symmetric polynomial are all offset by the same amount, they can be re-expressed as a linear combination of non-offset symmetric polynomials as follows:
\begin{equation}\label{eq:hsympoloffset}
h_k(x_1 + A, \dotsc, x_n + A) = \sum_{i=0}^{k}\binom{k + n - 1}{i} h_{k-i}(x_1, \dotsc, x_n) A^{i}
\end{equation}
\begin{equation}\label{eq:ssympoloffset}
\sigma_k(x_1 + A, \dotsc, x_n + A) = \sum_{i=0}^{k}\binom{n+ i - k}{i} \sigma_{k-i}(x_1, \dotsc, x_n) A^{i}
\end{equation}
\end{lemma}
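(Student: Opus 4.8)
The plan is to avoid induction on the number of arguments and instead read both identities directly off the generating functions of the preceding lemma, by extracting the coefficient of $u^k$. This is cleaner than an induction on $n$ because the substitution $x_i\mapsto x_i+A$ interacts with the product formulas in a completely transparent way.

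For the complete symmetric polynomials I would start from $\sum_k h_k(x_1+A,\dotsc,x_n+A)\,u^k=\prod_{i=1}^n\bigl(1-u(x_i+A)\bigr)^{-1}$, factor $(1-uA)$ out of each denominator using $1-u(x_i+A)=(1-uA)\bigl(1-\tfrac{u}{1-uA}x_i\bigr)$, and rewrite the product as
\[
\frac{1}{(1-uA)^n}\prod_{i=1}^n \frac{1}{1 - \frac{u}{1-uA}\,x_i} = \frac{1}{(1-uA)^n}\sum_{m=0}^\infty h_m(x_1,\dotsc,x_n)\left(\frac{u}{1-uA}\right)^m,
\]
where the last equality is the same generating function applied with the substituted variable $u/(1-uA)$. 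I would then expand each factor $(1-uA)^{-(n+m)}$ by the negative binomial theorem $\sum_j \binom{n+m+j-1}{j}(uA)^j$, collect powers of $u$ by setting $k=m+j$, and observe that $\binom{n+m+j-1}{j}=\binom{n+k-1}{k-m}$, which is exactly the coefficient $\binom{k+n-1}{i}$ in~\eqref{eq:hsympoloffset} after writing $i=k-m$.

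For the elementary symmetric polynomials the computation is the mirror image: from $\sum_k\sigma_k(x_1+A,\dotsc,x_n+A)\,u^k=\prod_{i=1}^n\bigl(1+u(x_i+A)\bigr)$ I would factor out $(1+uA)$ to obtain $\sum_{m=0}^n \sigma_m(x_1,\dotsc,x_n)\,u^m(1+uA)^{n-m}$. Here I note that since $\sigma_m$ vanishes for $m>n$, only non-negative integer powers $(1+uA)^{n-m}$ actually occur, so the ordinary binomial theorem suffices and no convergence question arises. Extracting the coefficient of $u^k$ gives $\sum_m \sigma_m(x_1,\dotsc,x_n)\binom{n-m}{k-m}A^{k-m}$, and substituting $i=k-m$ reproduces~\eqref{eq:ssympoloffset} with coefficient $\binom{n+i-k}{i}$.

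The only place that requires a little care is the formal-power-series substitution $u\mapsto u/(1\pm uA)$ together with the re-indexing of the resulting double sum; I expect the binomial bookkeeping in the $h$-case (verifying the identity $\binom{n+m+j-1}{j}=\binom{n+k-1}{k-m}$ and checking that the double sum reorganizes correctly under $k=m+j$) to be the main, though entirely elementary, obstacle. As the statement indicates, an alternative is a direct induction on $n$ using $h_k(X\cup\{x_{n+1}\})=\sum_{j=0}^k x_{n+1}^{\,j}\,h_{k-j}(X)$ and the splitting $\sigma_k(X\cup\{x_{n+1}\})=\sigma_k(X)+x_{n+1}\,\sigma_{k-1}(X)$; this works but is more cumbersome than the generating-function argument.
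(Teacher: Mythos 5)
Your proof is correct, and it takes a genuinely different route from the paper's. The paper disposes of this lemma in one sentence, asserting that it ``can be proved by induction on the number of arguments'' --- precisely the alternative you sketch in your last paragraph, adjoining one variable at a time via $h_k(X\cup\{x_{n+1}\})=\sum_{j} x_{n+1}^{j}h_{k-j}(X)$ and $\sigma_k(X\cup\{x_{n+1}\})=\sigma_k(X)+x_{n+1}\sigma_{k-1}(X)$. Your generating-function argument is sound: the factorization $1-u(x_i+A)=(1-uA)\bigl(1-\tfrac{u}{1-uA}x_i\bigr)$ is an identity, the substitution $u\mapsto u/(1-uA)$ is a legitimate formal-power-series operation since the substituted series has vanishing constant term, and the coefficient extraction checks out: with $j=k-m$ one has $\binom{n+m+j-1}{j}=\binom{n+k-1}{k-m}$, which is $\binom{k+n-1}{i}$ for $i=k-m$, and in the elementary case $\binom{n-m}{k-m}=\binom{n+i-k}{i}$, so both stated coefficients are recovered exactly. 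What your approach buys: it \emph{derives} the binomial coefficients rather than verifying a guessed formula, it treats the two cases uniformly (the only difference being the negative versus the ordinary binomial theorem), and it explains structurally why the coefficient in the complete case depends on the total degree $k$ --- it arises from $(1-uA)^{-(n+m)}$ with $m+j=k$ --- while in the elementary case it depends on $n-m=n-k+i$. What the paper's induction buys: it is entirely elementary, requiring no formal substitution or double-sum reindexing, at the cost of having to know the coefficients in advance and verify a Pascal-type recurrence at each inductive step.
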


\subsection{Stirling numbers}

We now recall some notions on Stirling numbers. A complete treatment of the subject can be found in \cite{CHA}.
\begin{definition}
The \textit{(unsigned) Stirling numbers of the first kind} $\stirling{i}{t}$  are defined as coefficients of the following expansion in the formal variable $T$
\begin{equation} (T)_i = \sum_{t=0}^i \stirling{i}{t} T^t
\end{equation}
where $i, t$ are nonnegative integers and the subscript indicates the {\em Pochhammer symbol}:
\begin{equation*} 
(x+1)_n=\frac{(x+n)!}{x!}=\left\{\begin{array}{ll} (x+1)(x+2)\cdots(x+n) &n\geq 0 \\
(x(x-1)\cdots(x+n+1))^{-1} &n\leq 0\end{array} \right. .
\end{equation*}
From the definition, $(x+1)_n$ vanishes for integers $x$ satisfying $-n\leq x\leq -1$, and $1/(x+1)_n$ vanishes for integers $x$ satisfying $0\leq x \leq -(n+1)$.
\par
The \textit{Stirling numbers of the second kind} $\Stirling{i}{t}$  are defined as coefficients of the following expansion in the formal variable $T$
\begin{equation} T^i = \sum_{t=0}^i \Stirling{i}{t} (T-t+1)_t
\end{equation}
where $i, t$ are nonnegative integers. Note that for $t > i$ we have $\stirling{i}{t} = \Stirling{i}{t} = 0$.
\end{definition}
The complete and elementary polynomials evaluated at integers are linked to the Stirling numbers by the following relation. 
\begin{align}
\sigma_v(1, 2, \dots, t-1) &= \stirling{t}{t-v}\label{eq:scomeS}\\
h_v(1, 2, \dots, t) &= \Stirling{t \+ v}{t} \label{eq:hcomeS}
\end{align}
The expressions in terms of generating series read 
\begin{lemma}\label{lem:genser} We have:
\begin{align}
\stirling{j}{t} &= [y^{j-t}]. \frac{(j-1)!}{(t-1)!} \mathcal{S}(y)^{-j} e^{yj/2}; & \Stirling{j}{t} &= [y^{j-t}]. \frac{j!}{t!} \mathcal{S}(y)^t e^{yt/2}.
\end{align}
\end{lemma}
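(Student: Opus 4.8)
The plan is to rewrite both right-hand sides as ordinary coefficient extractions from exponential generating functions for the Stirling numbers, after which each formula reduces to a single term. First I would record the purely algebraic identities
\begin{equation*}
\mathcal{S}(y)^t e^{yt/2}=\Bigl(\frac{e^{y}-1}{y}\Bigr)^t,
\qquad
\mathcal{S}(y)^{-j}e^{yj/2}=\Bigl(\frac{y}{1-e^{-y}}\Bigr)^j,
\end{equation*}
which follow from $\mathcal{S}(y)=(e^{y/2}-e^{-y/2})/y$ by absorbing the factors $e^{\pm yt/2}$ and $e^{\pm yj/2}$ using $e^{y/2}-e^{-y/2}=e^{-y/2}(e^{y}-1)$. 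This reduces the statement to the two coefficient evaluations $[y^{j-t}]\bigl((e^y-1)/y\bigr)^t=(t!/j!)\Stirling{j}{t}$ and $[y^{j-t}]\bigl(y/(1-e^{-y})\bigr)^j=\frac{(t-1)!}{(j-1)!}\stirling{j}{t}$.

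For the Stirling numbers of the second kind the argument is immediate: I would invoke the classical generating function $\sum_{j\ge t}\Stirling{j}{t}\,y^j/j!=(e^y-1)^t/t!$ (see \cite{CHA}), divide by $y^t$ to get $\bigl((e^y-1)/y\bigr)^t=t!\sum_{j\ge t}\Stirling{j}{t}\,y^{j-t}/j!$, and read off the coefficient of $y^{j-t}$. Here the exponent of the generating function already equals the lower index $t$, so nothing else is required.

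The Stirling numbers of the first kind are the subtle case, and I expect this to be the main obstacle: the expression $\bigl(y/(1-e^{-y})\bigr)^j$ carries the upper index $j$ in the exponent, whereas the classical generating function $\sum_{j\ge t}\stirling{j}{t}\,w^j/j!=(-\log(1-w))^t/t!$ carries the lower index $t$. To bridge this mismatch I would write the coefficient as a residue and substitute $w=1-e^{-y}$, i.e.\ $y=-\log(1-w)$ and $dy=dw/(1-w)$, which transforms $[y^{j-t}]\bigl(y/(1-e^{-y})\bigr)^j=\Res_{y=0} y^{t-1}(1-e^{-y})^{-j}\,dy$ into $[w^{j-1}]\bigl((-\log(1-w))^{t-1}/(1-w)\bigr)$. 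Since $\frac{d}{dw}\bigl((-\log(1-w))^t/t!\bigr)=(-\log(1-w))^{t-1}/\bigl((t-1)!\,(1-w)\bigr)$, differentiating the first-kind generating function term by term gives $[w^{j-1}]\bigl((-\log(1-w))^{t-1}/(1-w)\bigr)=\frac{(t-1)!}{(j-1)!}\stirling{j}{t}$, as needed. An alternative that avoids the residue would be to derive both coefficient identities directly from the symmetric-function generating functions of the previous subsection via \eqref{eq:scomeS} and \eqref{eq:hcomeS}, but the change of variables above is the shortest route and pinpoints the only real difficulty, namely converting the exponent from $j$ to $t$.
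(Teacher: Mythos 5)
Your proof is correct. Note that the paper states this lemma without any proof at all, treating it as a standard fact about Stirling numbers (the reference \cite{CHA} is given at the start of the subsection), so there is no internal argument to compare against; your write-up supplies the missing derivation. Both of your reductions check out: the identities $\mathcal{S}(y)^t e^{yt/2} = \bigl((e^y-1)/y\bigr)^t$ and $\mathcal{S}(y)^{-j}e^{yj/2} = \bigl(y/(1-e^{-y})\bigr)^j$ are immediate from $\mathcal{S}(y)=(e^{y/2}-e^{-y/2})/y$; the second-kind formula then follows by reading off coefficients from $(e^y-1)^t/t! = \sum_{j\ge t}\Stirling{j}{t}\,y^j/j!$; and for the first kind, your change of variables $w=1-e^{-y}$ in the formal residue is legitimate, since $w(y)$ is a formal power series with $w(0)=0$ and $w'(0)=1$, so residues transform exactly as you claim, after which differentiating $(-\log(1-w))^t/t! = \sum_{j\ge t}\stirling{j}{t}\,w^j/j!$ yields $[w^{j-1}]\bigl((-\log(1-w))^{t-1}/(1-w)\bigr)=\frac{(t-1)!}{(j-1)!}\stirling{j}{t}$, as needed. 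Two consistency points you implicitly rely on do hold: the paper's definitions via its Pochhammer convention are the rising factorial $T(T+1)\cdots(T+i-1)=\sum_t\stirling{i}{t}T^t$ and the falling factorial expansion $T^i=\sum_t\Stirling{i}{t}T(T-1)\cdots(T-t+1)$, which are precisely the classical definitions underlying the exponential generating functions you invoke. Your diagnosis of the only genuine difficulty — that the first-kind formula carries the upper index $j$ in the exponent while the classical generating function carries the lower index $t$ — is accurate, and the residue substitution is a clean way to bridge it.
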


\section{\texorpdfstring{$ \mathcal{A}$}{A}-operators for monotone orbifold Hurwitz numbers} \label{sec:Aoperators}
In this section we express the generating series for monotone and strictly monotone orbifold Hurwitz numbers in terms of correlators of certain $\mathcal{A}$-operators acting on the Fock space.

\subsection{Generating series for monotone orbifold Hurwitz numbers}

Let us define the genus-generating series for disconnected monotone and strictly monotone orbifold Hurwitz numbers as
\begin{equation}\label{eq:GenSer}
H^{\bullet, r, \leq}(u,\vec\mu) \coloneqq \sum_{g = 0}^{\infty} \Big( h^{r,\leq}_{g; \vec\mu} \Big) u^b, \qquad \qquad  H^{\bullet, r, <}(u,\vec\mu) \coloneqq \sum_{g = 0}^{\infty} \Big( h^{r,<}_{g; \vec\mu} \Big) u^b
\end{equation}
 where, by Riemann-Hurwitz, $b$ is the number of simple ramifications 
 $$b = 2g - 2 + l(\mu) + |\mu|/r.$$
We want to express the generating series through the semi-infinite wedge formalism. In \cite{ALS} it was proved that the eigenvalue of the operator 
$$ \cD^{(h)}(u) 
 \coloneqq \exp\left(\left[\frac{\tilde \cE_0\left(u^2\frac{d}{du}\right)}{\zeta\left(u^2\frac{d}{du}\right)} - E\right] .\log u \right) \\$$
acting on the basis of the charge zero sector of the Fock space is the generating series for the complete symmetric polynomials, in the sense that
$$\mathcal{D}^{(h)}(u) .v_\lambda  =  \sum_{k=0}^\infty h_k(\textsf{cr}^{\lambda})u^{k}  v_{\lambda},$$
where the set of variables $\textsf{cr}^{\lambda}$ is the content of Young tableau $\lambda$. Similarly, the operator
$$ \cD^{(\sigma)}(u) 
 \coloneqq \exp\left( - \left[\frac{\tilde \cE_0\left(- u^2\frac{d}{du}\right)}{\zeta\left(- u^2\frac{d}{du}\right)} - E\right] .\log u \right) \\$$
produces as eigenvalue the generating series for elementary symmetric polynomials:
$$\mathcal{D}^{(\sigma)}(u) .v_\lambda  =  \sum_{k=0}^\infty \sigma_k(\textsf{cr}^{\lambda})u^{k}  v_{\lambda}.$$
The generating series in equation~\eqref{eq:GenSer} therefore read respectively
\begin{equation}\label{eq:GenSerFock}
H^{\bullet, r, \leq}(u,\vec\mu) = \cord{ e^{\frac{\alpha_{r}}{r}} \mathcal{D}^{(h)}(u) \prod_{i=1}^n \frac{\alpha_{-\mu_i}}{\mu_i} } 
\end{equation}
and
\begin{equation}\label{eq:GenSerFocksigma}
 H^{\bullet, r, <}(u,\vec\mu) = \cord{ e^{\frac{\alpha_{r}}{r}} \mathcal{D}^{(\sigma)}(u) \prod_{i=1}^n \frac{\alpha_{-\mu_i}}{\mu_i} } 
\end{equation}

\subsection{Conjugations of operators}
In this section we prove several lemmata that we will use later.
\begin{lemma}\label{lem: firstconj} We have:
\begin{align*}
\mathcal{O}^h_\mu (u ) &\coloneqq
  \mathcal{D}^{(h)}(u) \alpha_{-\mu} \mathcal{D}^{(h)}(u)^{-1} = \sum_{k \in \Z + \frac12} \sum_{v=0}^{\infty} h_v(1 + k - \sfrac{1}{2}, \dots, \mu + k - \sfrac{1}{2}) u^v E_{k + \mu, k} ;
\\
\mathcal{O}^{\sigma}_\mu (u ) &\coloneqq
  \mathcal{D}^{(\sigma)}(u) \alpha_{-\mu} \mathcal{D}^{(\sigma)}(u)^{-1} = \sum_{k \in \Z + \frac12} \sum_{v=0}^{\infty} \sigma_v(1 + k - \sfrac{1}{2}, \dots, \mu + k - \sfrac{1}{2}) u^v E_{k + \mu, k} .
\end{align*}
\end{lemma}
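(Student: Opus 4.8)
The plan is to exploit the fact that $\Dh(u)$ acts diagonally in the basis $\{v_\lambda\}$, so that conjugating a shift operator by it reduces to a ratio of eigenvalues. Recall from the discussion preceding the lemma that $\Dh(u)\,v_\lambda = D_\lambda(u)\,v_\lambda$ with $D_\lambda(u) = \sum_{k\ge 0} h_k(\textsf{cr}^\lambda)u^k = \prod_{c\in\textsf{cr}^\lambda}(1-uc)^{-1}$, the last equality being the complete-symmetric generating function. Since $\alpha_{-\mu} = \sum_{k\in\Z+\frac12} E_{k+\mu,k}$, and each $E_{k+\mu,k}$ sends $v_\lambda$ to $\pm v_{\lambda'}$ (or to $0$), where $\lambda'$ is obtained by replacing the wedge factor $\underline{k}$ by $\underline{k+\mu}$, conjugating by the diagonal operator $\Dh(u)$ multiplies the entry $E_{k+\mu,k}$ by $D_{\lambda'}(u)/D_\lambda(u)$, the reordering sign cancelling between the two sides. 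Everything thus reduces to computing this ratio and checking that it depends on $k$ and $\mu$ only, not on $\lambda$.

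The heart of the argument is therefore the combinatorial claim that passing from $\lambda$ to $\lambda'$ enlarges the multiset of contents $\textsf{cr}^\lambda$ by exactly the $\mu$ consecutive integers $k+\tfrac12, k+\tfrac32, \dots, k+\mu-\tfrac12$, independently of $\lambda$. I would prove this via the Maya-diagram description of contents: writing $S_\lambda = \{\lambda_i - i + \tfrac12\}$ for the set of occupied positions, the number of boxes of content $c$ equals $\#\{s\in S_\lambda : s>c\}$ for $c\ge 0$ and, dually, the number of holes strictly below $c$ for $c<0$. The move only deletes $k$ from $S_\lambda$ and inserts $k+\mu$, so at each $c$ the count changes by $[\,k+\mu>c\,]-[\,k>c\,]$ (respectively by the hole-dual expression), which in both regimes is precisely the indicator of $c\in\{k+\tfrac12,\dots,k+\mu-\tfrac12\}$. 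This yields the claim with no dependence on the rest of $\lambda$.

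Combining the two steps, the ratio is $D_{\lambda'}(u)/D_\lambda(u) = \prod_{j=1}^{\mu}\bigl(1-u(k+j-\tfrac12)\bigr)^{-1} = \sum_{v\ge 0} h_v\bigl(1+k-\tfrac12,\dots,\mu+k-\tfrac12\bigr)u^v$, applying the complete-symmetric generating function once more. As this is independent of $\lambda$, the family of matrix-element computations promotes to the single operator identity for $\mathcal{O}^h_\mu(u)$. The strictly monotone case is verbatim the same argument with $\Dsig(u)$, whose eigenvalue is $\prod_{c\in\textsf{cr}^\lambda}(1+uc)$, so the ratio becomes $\prod_{j=1}^{\mu}(1+u(k+j-\tfrac12)) = \sum_{v} \sigma_v(1+k-\tfrac12,\dots,\mu+k-\tfrac12)u^v$, giving $\mathcal{O}^\sigma_\mu(u)$.

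The main obstacle is the combinatorial claim of the second paragraph, namely that the content increment is exactly the consecutive block $\{k+\tfrac12,\dots,k+\mu-\tfrac12\}$ \emph{uniformly} in $\lambda$: this $\lambda$-independence is what upgrades a family of matrix-element identities into one operator identity, and it must be checked carefully across both ranges $c\ge 0$ and $c<0$. A secondary point deserving care is the reduction in the first paragraph: one should confirm that for the purpose of conjugating $\alpha_{-\mu}$ the operator $\Dh(u)$ may legitimately be treated as the $\C[[u]]$-linear diagonal operator with eigenvalues $D_\lambda(u)$, so that it genuinely commutes past the scalar factor $D_\lambda(u)^{-1}$ produced by $\Dh(u)^{-1}$.
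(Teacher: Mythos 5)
Your proof is correct, and it takes a genuinely different route from the paper's. The paper argues entirely on the operator side: after the substitution $u=-z^{-1}$ it writes $\mathcal{D}^{(h)}(u(z))=e^{B(z)}(-z)^{E}$, expands the conjugation by the Hadamard lemma into iterated commutators $\ad_{B(z)}^{s}(\alpha_{-\mu})$, and evaluates these with the commutation relation \eqref{eq:commE}, so that the product $\prod_{l=0}^{\mu-1}\bigl(1-(l+k+\sfrac{1}{2})u\bigr)^{-1}$ emerges from resummed geometric series of shift operators $e^{a\frac{d}{dz}}$. You instead diagonalize: since $\mathcal{D}^{(h)}(u)$ acts on $v_\lambda$ by the scalar $\prod_{c\in\textsf{cr}^{\lambda}}(1-uc)^{-1}$, conjugation rescales each matrix element $E_{k+\mu,k}$ by the eigenvalue ratio $D_{\lambda'}(u)/D_{\lambda}(u)$ (with any reordering sign cancelling, as you note), and the whole lemma reduces to the combinatorial fact that passing from $\lambda$ to $\lambda'$ adds to the content multiset exactly the block $k+\tfrac12,\dots,k+\mu-\tfrac12$, uniformly in $\lambda$; your Maya-diagram verification of this (particle count for $c\ge 0$, hole count for $c<0$, with the change equal to the indicator of $k<c<k+\mu$ in both regimes) is correct, and the $\lambda$-independence is precisely what promotes the family of matrix-element identities to the operator identity. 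Your route is more elementary and arguably more illuminating: it needs neither the formal substitution $u=-z^{-1}$ nor the $\mathcal{E}$-operator commutators, and it explains why complete (respectively elementary) symmetric polynomials in $\mu$ consecutive arguments appear --- they are generating functions over the contents of the $\mu$ added boxes. What the paper's computation buys in exchange is uniformity of technique: the same Hadamard-plus-commutator manipulation is reused immediately in lemma~\ref{conjall} for the conjugation by $e^{\alpha_{r}/r}$, where no convenient common eigenbasis is available, so the paper's proof doubles as the template for the later, unavoidable operator computations.
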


\begin{proof}
We prove only the first equation, since the proof for the second is completely analogous.
Applying the change of variable $u(z) = -z^{-1}$, we have 
\begin{equation}
\cD^{(h)}(u(z)) 
 = \exp\left(- \frac{\tilde \cE_0\left(\frac{d}{dz}\right)}{\zeta\left(\frac{d}{dz}\right)}.\log (-z) \right) (-z)^E  =: e^{B(z)} (-z)^E
\end{equation}
Observe that the operator $B(z)$ has zero energy and hence commutes with $(-z)^E$. On the other hand, the operator $\alpha_{-\mu}$ has energy $-\mu$, hence the conjugation by the operator $(-z)^E$ produces the extra factor $(-z)^{\mu}$. By the Hadamard lemma we can expand the conjugation as
\begin{equation} 
\mathcal{D}^{(h)}(u) \alpha_{-\mu} \mathcal{D}^{(h)}(u)^{-1} = (-z)^{\mu} \sum_{s=0}^{\infty} \frac{1}{s!}\ad_{B(z)}^s (\alpha_{-\mu})
\end{equation}
It is enough to show that 
\begin{equation}\label{eq: commutator}
\ad_{B(z)}^s (\alpha_{-\mu}) = \sum_{k \in \Z + \frac12} {\log\left(\prod_{l=0}^{\mu - 1} \frac{1}{(-z -l - k - \sfrac{1}{2})} \right)}^s E_{k + \mu, k}
\end{equation}
Indeed this would imply 
\begin{equation*} 
\mathcal{D}^{(h)}(u) \alpha_{-\mu} \mathcal{D}^{(h)}(u)^{-1} = \sum_{k \in \Z + \frac12} \left(\prod_{l=0}^{\mu - 1} \frac{1}{1- (l + k + \sfrac{1}{2})(-z^{-1})} \right) E_{k + \mu, k}
\end{equation*}
which proves the lemma by substituting back $u = -z^{-1}$ and expanding in the generating series for complete symmetric polynomials. Let $C(s)$ be the left hand side of equation~\eqref{eq: commutator}. We compute:
\begin{align*}
C(s) &= \left[- \frac{\tilde \cE_0\left(\frac{d}{dz_s}\right)}{\zeta\left(\frac{d}{dz_s}\right)}, \dots \left[ - \frac{\tilde \cE_0\left(\frac{d}{dz_1}\right)}{\zeta\left(\frac{d}{dz_1}\right)}, \mathcal{E}_{-\mu}(0)\right] \dots \right].\prod_{i=1}^s \log(-z_i) \Big{|}_{z_i = z} \\
& = (-1)^s \prod_{i=1}^s \frac{ \zeta\left(\mu \frac{d}{dz_i}\right)}{\zeta\left(\frac{d}{dz_i}\right)}  \mathcal{E}_{-\mu}\left(\sum_{i=1}^s \frac{d}{dz_i}\right).\prod_{i=1}^s \log(-z_i) \Big{|}_{z_i = z} \\
& =  \sum_{k \in \Z + \sfrac{1}{2}} \prod_{i=1}^s  \sum_{l=0}^{\infty} -\left(e^{\frac{d}{dz_i}(\mu + k - l -\sfrac{1}{2})} - e^{\frac{d}{dz_i}(k - l -\sfrac{1}{2} )}\right).\log(-z_i) E_{k + \mu, k}\Big{|}_{z_i = z}
\end{align*}
Observe that the summation over $l$ is the result of the expansion in geometric formal power series of $1/(1 - e^{-d/dz_i})$.
The expression in the last line equals the right hand side of equation \eqref{eq: commutator} since the $s$ operators act independently, and using \( e^{a\frac{d}{dz}} f(z) = f(z+a)\). The lemma is proved.
\end{proof}

In the following lemma, we calculate the inverse of the  \( \mathcal{O}\)-operators, defined in lemma~\ref{lem: firstconj}, when viewed as operators on the space \( V\). 

\begin{lemma} The \( \mathcal{O}\)-operators can be viewed as elements of the ring \( \End (V)\llbracket u\rrbracket \), and considered as such are invertible with the following inverses:
\begin{align}
{\mathcal{O}^{h}_\mu (u )}^{-1}  &= \sum_{k \in \Z + \frac12} \sum_{v=0}^{\infty} \sigma_v(1 + k - \sfrac{1}{2}, \dots, \mu + k - \sfrac{1}{2}) (-u)^v E_{k, k - \mu} \\
{\mathcal{O}^{\sigma}_\mu (u )}^{-1}  &= \sum_{k \in \Z + \frac12} \sum_{v=0}^{\infty} h_v(1 + k - \sfrac{1}{2}, \dots, \mu + k - \sfrac{1}{2}) (-u)^v E_{k, k - \mu} 
\end{align}
\begin{proof}
This follows from the duality between generating series of complete and elementary symmetric polynomials expressed in equation~\eqref{eq:sympolrel}, and the form of the \( \mathcal{O}\)-operators in lemma~\ref{lem: firstconj}.
\end{proof}
\end{lemma}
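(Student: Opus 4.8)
The plan is to work entirely in the one-particle space \(V\), where each \(E_{i,j}\) acts as the elementary matrix unit sending \(\underline{j}\mapsto\underline{i}\) and annihilating every other basis vector, so that \(E_{i,j}E_{k,l}=\delta_{j,k}E_{i,l}\). Read this way, Lemma~\ref{lem: firstconj} says that \(\mathcal{O}^h_\mu(u)\) is supported on the single shifted diagonal \(\{(k+\mu,k)\}\): it sends \(\underline{k}\) to \(a_k(u)\,\underline{k+\mu}\), where \(a_k(u)=\sum_{v\ge 0}h_v(X_k)u^v\) is the generating series of complete symmetric polynomials in the variable set \(X_k=\{1+k-\sfrac12,\dots,\mu+k-\sfrac12\}\). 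I would first record that this already makes \(\mathcal{O}^h_\mu(u)\) invertible in \(\End(V)\llbracket u\rrbracket\) for a cheap reason: since \(h_0=1\), its \(u^0\)-coefficient is \(\sum_k E_{k+\mu,k}=\alpha_{-\mu}\), which lies in \(\GL(V)\) with inverse \(\alpha_\mu\); hence \(\mathcal{O}^h_\mu(u)=\alpha_{-\mu}\,(\mathrm{id}+u\,N)\) for some \(N\in\End(V)\llbracket u\rrbracket\), and the second factor is inverted by a geometric series.

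To pin down the inverse exactly, I would use that a shifted-diagonal support turns the inversion into independent scalar problems: because \(k\mapsto k+\mu\) is a bijection of \(\Z+\frac12\), the operator is a weighted shift, and its inverse is the weighted shift the other way, sending \(\underline{k+\mu}\) back to \(a_k(u)^{-1}\,\underline{k}\). Thus the inverse is supported on the transposed diagonal, \((\mathcal{O}^h_\mu(u))^{-1}=\sum_k a_k(u)^{-1}E_{k,k+\mu}\), and everything reduces to inverting the scalar series \(a_k(u)\). Here the one essential input is the duality \eqref{eq:sympolrel}: for the fixed set \(X_k\) it gives \(\big(\sum_v h_v(X_k)u^v\big)\big(\sum_l \sigma_l(X_k)(-u)^l\big)=1\), so \(a_k(u)^{-1}=\sum_l \sigma_l(X_k)(-u)^l\) is precisely the elementary-symmetric generating series in the same variables. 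This is the mechanism that exchanges \(h\) for \(\sigma\) and produces the sign \((-u)^l\) in the claimed formula; the \(\mathcal{O}^\sigma_\mu\)-statement is identical with the roles of \(h\) and \(\sigma\) swapped, using the symmetric form of \eqref{eq:sympolrel}.

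Finally I would confirm the two-sided inverse by composing matrix units directly. In one order, \(\mathcal{O}^h_\mu(u)\,(\mathcal{O}^h_\mu(u))^{-1}=\sum_k a_k(u)a_k(u)^{-1}\,E_{k+\mu,k}E_{k,k+\mu}=\sum_k E_{k+\mu,k+\mu}=\mathrm{id}\), using \(E_{k+\mu,k}E_{k,k+\mu}=E_{k+\mu,k+\mu}\) and the scalar identity just established; the other order collapses symmetrically through \(E_{k,k+\mu}E_{k+\mu,k}=E_{k,k}\). I expect the only genuine obstacle to be the index bookkeeping: one must pair the supports so that the \(\underline{k+\mu}\) produced by \(\mathcal{O}^h_\mu(u)\) is returned to \(\underline{k}\) by the inverse, i.e. the \emph{same} variable set \(X_k\) occurs in both factors of \eqref{eq:sympolrel}. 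Once this pairing is set correctly, the term-by-term application of the duality is forced and the remaining verification is routine.
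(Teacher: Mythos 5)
Your proof is correct and is essentially the paper's own argument written out in full: the paper's proof consists precisely of combining the duality \eqref{eq:sympolrel} with the shifted-diagonal form of the $\mathcal{O}$-operators from lemma~\ref{lem: firstconj}, which is exactly what your weighted-shift decomposition and term-by-term composition make explicit. One point deserves emphasis, however: the inverse you derive carries the matrix units $E_{k,k+\mu}$, not the $E_{k,k-\mu}$ printed in the statement, and yours is the correct index. An operator built from $E_{k,k-\mu}$ has the same energy $-\mu$ as $\mathcal{O}^h_\mu(u)$ itself, so it cannot be its inverse; indeed, composing the printed expression with $\mathcal{O}^h_\mu(u)$ and applying \eqref{eq:sympolrel} yields $\sum_k E_{k+\mu,k-\mu}$ rather than the identity. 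Your version is also the one consistent with the rest of the paper: the $u^0$-coefficient of the inverse must be $\alpha_{-\mu}^{-1}=\alpha_\mu=\sum_k E_{k-\mu,k}$, and the corollary following the lemma writes ${\mathcal{O}^h_\mu(u)}^{-1}$ in terms of $\mathcal{E}_{\mu}(-uz)$, which is assembled from the energy-$(+\mu)$ units $E_{k-\mu,k}$. So the printed $E_{k,k-\mu}$ is a typo for $E_{k,k+\mu}$, and your derivation, including the verification that both compositions collapse to $\sum_k E_{k,k}=\mathrm{id}$, is sound as written.
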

Because of the way we constructed the \( \mathcal{O}\)-operators, we have that
\begin{align}
{\mathcal{O}^{h}_\mu (u )}^{-1} &= \mathcal{D}^{(h)}(u) \alpha_\mu \mathcal{D}^{(h)}(u)^{-1} & {\mathcal{O}^{\sigma}_\mu (u )}^{-1} &= \mathcal{D}^{(\sigma)}(u) \alpha_\mu \mathcal{D}^{(\sigma)}(u)^{-1}
\end{align}
From now on, we will keep using this notation also if we consider actions of these operators on \( \mathcal{V} \).

\begin{corollary}
The different kinds of \( \mathcal{O}\)-operators can also be written as follows:
\begin{align}
\mathcal{O}^h_\mu (u) &= \sum_{v = 0}^{\infty} \frac{ (v \+ \mu \mi 1)!}{(\mu \mi 1) !}[z^v] \mathcal{S}(uz)^{\mu - 1} \mathcal{E}_{- \mu}( uz)\\
{\mathcal{O}^h_\mu (u)}^{-1} &= \sum_{v=0}^\mu \frac{\mu!}{(\mu \mi v)!} [z^v] \mathcal{S}(uz)^{-\mu -1} \mathcal{E}_{\mu} (-uz) \\
\mathcal{O}^{\sigma}_\mu (u) &= \sum_{v=0}^\mu \frac{\mu!}{(\mu \mi v)!} [z^v] \mathcal{S}(uz)^{-\mu -1} \mathcal{E}_{-\mu} (uz)  \\
{\mathcal{O}^{\sigma}_\mu (u)}^{-1} &= \sum_{v = 0}^{\infty} \frac{ (v \+ \mu \mi 1)!}{(\mu \mi 1) !} [z^v] \mathcal{S}(uz)^{\mu - 1} \mathcal{E}_{\mu}( -uz) 
\end{align}
\end{corollary}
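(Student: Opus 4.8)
The four formulas all have the same shape: by Lemma~\ref{lem: firstconj} and the subsequent inverse lemma each $\mathcal{O}$-operator is a sum $\sum_k\big(\sum_v c_v u^v\big)E$ whose matrix-element coefficients $c_v$ are the symmetric polynomials $h_v$ or $\sigma_v$ evaluated at the consecutive arguments $1+k-\tfrac12,\dots,\mu+k-\tfrac12$, whereas the right-hand side is $\sum_v(\text{factorial factor})\,[z^v]\,\mathcal{S}(uz)^{\pm}\mathcal{E}_{\mp\mu}(\pm uz)$. So the first thing I would do is expand each $\mathcal{E}_{\pm\mu}(\pm uz)$ by its definition, observe that $[z^v]$ of a function of the single combination $uz$ equals $u^v$ times the coefficient $[y^v]$ with $y=uz$, and thereby strip off the operator $E$ and the bookkeeping variable $u$. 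Each operator identity then collapses to a purely scalar identity between a symmetric polynomial in consecutive arguments and a coefficient of a power of $\mathcal{S}$.

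Concretely, for $\mathcal{O}^h_\mu$ the scalar identity to prove is
\begin{equation*}
h_v\big(1+k-\tfrac12,\dots,\mu+k-\tfrac12\big)=\frac{(v+\mu-1)!}{(\mu-1)!}\,[y^v]\,\mathcal{S}(y)^{\mu-1}e^{y(k+\mu/2)},
\end{equation*}
and for $\mathcal{O}^\sigma_\mu$ the analogue with $\sigma_v$, the factor $\frac{\mu!}{(\mu-v)!}$, and $\mathcal{S}(y)^{-\mu-1}$. I would prove these by peeling off the $k$-dependence with the offset relations \eqref{eq:hsympoloffset}/\eqref{eq:ssympoloffset}: writing the arguments as $\{1,\dots,\mu\}$ shifted by $A=k-\tfrac12$ and using $e^{y(k+\mu/2)}=e^{yA}e^{y(\mu+1)/2}$, both sides become power series in $A$ whose $A^i$-coefficients must match. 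A short factorial cancellation shows this reduces each identity to the single non-offset statement $h_w(1,\dots,\mu)=\frac{(\mu+w-1)!}{(\mu-1)!}[y^w]\mathcal{S}(y)^{\mu-1}e^{y(\mu+1)/2}$, respectively $\sigma_w(1,\dots,\mu)=\frac{\mu!}{(\mu-w)!}[y^w]\mathcal{S}(y)^{-\mu-1}e^{y(\mu+1)/2}$.

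The $\sigma$-case of this non-offset identity is immediate: by \eqref{eq:scomeS} it reads $\sigma_w(1,\dots,\mu)=\stirling{\mu+1}{\mu+1-w}$, and applying the first-kind half of Lemma~\ref{lem:genser} with $j=\mu+1$ produces exactly the power $\mathcal{S}^{-\mu-1}$ together with the factor $\frac{\mu!}{(\mu-w)!}$. The $h$-case is where the only genuine subtlety lies, and this is the step I expect to be the main obstacle: by \eqref{eq:hcomeS} and Lemma~\ref{lem:genser} the natural generating function for $h_w(1,\dots,\mu)=\Stirling{\mu+w}{\mu}$ carries the power $\mathcal{S}^{\mu}$, not the $\mathcal{S}^{\mu-1}$ demanded by the statement. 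To bridge this gap I would use the elementary identity $\mathcal{S}(y)e^{y/2}=(e^y-1)/y$ to rewrite $\mathcal{S}(y)^{\mu-1}e^{y(\mu+1)/2}=(e^y-1)^{\mu-1}e^y/y^{\mu-1}$, note that $(e^y-1)^{\mu-1}e^y=\tfrac1\mu\frac{d}{dy}(e^y-1)^\mu$, and extract coefficients against the exponential generating function $(e^y-1)^\mu=\mu!\sum_n\Stirling{n}{\mu}y^n/n!$; the single derivative is precisely what converts the prefactor $\frac{(\mu+w)!}{\mu!}$ into $\frac{(\mu+w-1)!}{(\mu-1)!}$, recovering $\Stirling{\mu+w}{\mu}$ with the shifted power of $\mathcal{S}$.

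Finally, the two inverse-operator formulas are handled identically: expanding $\mathcal{E}_{\mu}(-uz)$ and matching against the matrix elements supplied by the inverse lemma reduces them, after reindexing the $E$-summation and tracking the sign $u\mapsto-u$, to the very same two scalar identities, now with the roles of $h$ and $\sigma$ interchanged exactly as dictated by the duality \eqref{eq:sympolrel} that was already used to establish that lemma. Thus no fresh computation is needed for the inverses beyond the sign bookkeeping, and the whole corollary follows once the single power-shift identity in the $h$-case is in place.
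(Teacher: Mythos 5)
Your proof is correct, and its skeleton is the paper's: start from Lemma~\ref{lem: firstconj} and its inverse, reduce to scalar identities for each matrix element, peel off the $k$-dependence with the offset relations \eqref{eq:hsympoloffset}--\eqref{eq:ssympoloffset}, and convert to Stirling numbers via \eqref{eq:hcomeS}, \eqref{eq:scomeS} and Lemma~\ref{lem:genser}. The one real difference is the choice of offset, and it is precisely what creates the obstacle you flag as the main subtlety. You shift uniformly by $A=k-\tfrac12$, so the base arguments are $(1,\dots,\mu)$; this is the natural choice for the $\sigma$-case (it lands on $\stirling{\mu+1}{\mu+1-w}$, whose generating series in Lemma~\ref{lem:genser} already carries $\mathcal{S}^{-\mu-1}$), but in the $h$-case it lands on $\Stirling{\mu+w}{\mu}$ with the power $\mathcal{S}^{\mu}$, forcing your power-shift identity. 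The paper instead shifts the $h$-case by $A=k+\tfrac12$, so the base arguments are $(0,1,\dots,\mu-1)$; since a zero argument is invisible to complete symmetric polynomials, this gives $h_{w}(0,1,\dots,\mu-1)=\Stirling{\mu-1+w}{\mu-1}$, whose generating series carries exactly $\mathcal{S}^{\mu-1}e^{y(\mu-1)/2}$ and the prefactor $\frac{(\mu-1+w)!}{(\mu-1)!}$, so no bridging identity is needed at all: with the offset chosen per case ($k+\tfrac12$ for the $h$-type formulas, $k-\tfrac12$ for the $\sigma$-type ones), each of the four formulas is a one-line application of Lemma~\ref{lem:genser}. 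Your bridge, $\mathcal{S}(y)^{\mu-1}e^{y(\mu+1)/2}=\frac{1}{\mu\,y^{\mu-1}}\frac{d}{dy}(e^y-1)^{\mu}$ combined with $(e^y-1)^{\mu}=\mu!\sum_n \Stirling{n}{\mu}\,y^n/n!$, is a correct substitute (indeed it amounts to re-deriving Lemma~\ref{lem:genser} at the shifted index), and your treatment of the two inverse formulas by reindexing the $E$-sum and absorbing signs through homogeneity of $h_v,\sigma_v$ and evenness of $\mathcal{S}$ matches the paper's ``the calculation is similar''. In short: what the paper's per-case offset buys is economy; what your uniform offset buys is a single reduction scheme for all four formulas, at the cost of one extra (elementary, correctly proved) identity that also makes transparent why the exponent of $\mathcal{S}$ is $\mu-1$ rather than $\mu$.
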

\begin{proof}
We will first derive the first equation, starting from lemma~\ref{lem: firstconj}. First we use equation~\eqref{eq:hsympoloffset}:
\begin{align}
\mathcal{O}^h_\mu (u ) &= \sum_{k \in \Z + \frac12} \sum_{v=0}^{\infty} h_v(1 + k - \sfrac{1}{2}, \dots, \mu + k - \sfrac{1}{2}) u^v E_{k + \mu, k}\\
&= \sum_{k \in \Z + \frac12} \sum_{v=0}^{\infty} \sum_{i=0}^v \binom{v \+ \mu \mi 1}{i} h_{v-i}(0, \dots, \mu -1) \big( k + \frac{1}{2} \big)^i u^v E_{k + \mu, k}
\end{align}
By equation~\refeq{eq:hcomeS} and lemma~\ref{lem:genser}, we then get:
\begin{align}
\mathcal{O}^h_\mu (u ) &= \sum_{k \in \Z + \frac12} \sum_{v=0}^{\infty} \sum_{i=0}^v \binom{v \+ \mu \mi 1}{i}[y^{v-i}] \frac{(v\+ \mu \mi i \mi 1)!}{(\mu \mi 1)!}\mathcal{S}(y)^{\mu - 1} e^{y\frac{\mu - 1}{2}} [z^i]i! e^{z(k + \frac{1}{2})} u^v E_{k + \mu, k}\\
&= \sum_{v=0}^{\infty} \frac{(v \+ \mu \mi 1)!}{(\mu \mi 1)!}[z^v] \mathcal{S}(uz)^{\mu - 1} \mathcal{E}_{-\mu}(uz)
\end{align}
For the other equations, the calculation is similar, replacing the equations for the complete symmetric polynomials with their counterparts for the elementary symmetric polynomials where necessary.
\end{proof}

\begin{lemma}\label{conjall}
\begin{align}
\qquad \qquad e^{\frac{\alpha_{r}}{r}} \mathcal{O}^h_\mu (u)  e^{-\frac{\alpha_{r}}{r}} &= \sum_{t=0}^{\infty} \sum_{v = t}^{\infty} \frac{ (v + \mu - 1)!}{t! \,(\mu-1) !} u^t
[z^{v-t}] \mathcal{S}(uz)^{\mu - 1} \mathcal{S}(ruz)^t \mathcal{E}_{tr - \mu}( uz)  \numberthis \label{eq:conj1}
\\
e^{\frac{\alpha_{r}}{r}} {\mathcal{O}^h_\mu (u)}^{-1}  e^{-\frac{\alpha_{r}}{r}} &= \sum_{t=0}^\mu  \sum_{v=t}^\mu \frac{\mu!}{t!(\mu \mi v)!} (-u)^t [z^{v-t}] \mathcal{S}(uz)^{-\mu -1} \mathcal{S}(ruz)^t \mathcal{E}_{tr+\mu} (-uz)  \label{eq:conj2}
\\
e^{\frac{\alpha_{r}}{r}} \mathcal{O}^{\sigma}_\mu (u) e^{-\frac{\alpha_{r}}{r}} &= \sum_{t=0}^\mu  \sum_{v=t}^\mu \frac{\mu!}{t!(\mu \mi v)!} u^t [z^{v-t}] \mathcal{S}(uz)^{-\mu -1} \mathcal{S}(ruz)^t \mathcal{E}_{tr-\mu} (uz)  \label{eq:conj3}
\\
e^{\frac{\alpha_{r}}{r}} {\mathcal{O}^{\sigma}_\mu (u)}^{-1}  e^{-\frac{\alpha_{r}}{r}} &= \sum_{t=0}^{\infty} \sum_{v = t}^{\infty} \frac{ (v + \mu - 1)!}{t! \,(\mu-1) !} (-u)^t
[z^{v-t}] \mathcal{S}(uz)^{\mu - 1} \mathcal{S}(ruz)^t \mathcal{E}_{tr + \mu}( -uz) \numberthis \label{eq:conj4}
\end{align}
\end{lemma}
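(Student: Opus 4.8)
The plan is to reduce all four identities to a single computation: the conjugation of one $\mathcal{E}$-operator by $e^{\alpha_r/r}$. Starting from the closed forms for the $\mathcal{O}$-operators given in the preceding corollary, each of them is a sum over $v$ of $[z^v]$ applied to the product of a scalar power series in $uz$ (a power of $\mathcal{S}(uz)$) and a single operator $\mathcal{E}_{\mp\mu}(\pm uz)$. Since the scalar factors $\mathcal{S}(uz)^{\mu-1}$ (respectively $\mathcal{S}(uz)^{-\mu-1}$) and the coefficient extraction $[z^v]$ commute past the conjugation, it suffices to compute $e^{\alpha_r/r}\,\mathcal{E}_{-\mu}(uz)\,e^{-\alpha_r/r}$ and $e^{\alpha_r/r}\,\mathcal{E}_{\mu}(-uz)\,e^{-\alpha_r/r}$.

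For this I would use the Hadamard lemma $e^{A}Be^{-A}=\sum_{s\geq 0}\tfrac{1}{s!}\ad_A^s(B)$ with $A=\alpha_r/r$, together with the fact that $\alpha_r=\mathcal{E}_r(0)$ and the commutation formula \eqref{eq:commE}. The decisive observation is that the relevant $2\times 2$ determinant is \emph{independent of the running first index}: for any integer $m$,
\begin{equation*}
\bigl[\mathcal{E}_r(0),\mathcal{E}_{m}(w)\bigr]=\zeta\!\left(\det\begin{bmatrix} r & 0\\ m & w\end{bmatrix}\right)\mathcal{E}_{r+m}(w)=\zeta(rw)\,\mathcal{E}_{r+m}(w),
\end{equation*}
because the second argument of $\alpha_r$ vanishes. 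Hence each application of $\ad_{\alpha_r/r}$ shifts the first index of $\mathcal{E}$ up by $r$, leaves the argument $w$ untouched, and multiplies by the \emph{same} scalar $\zeta(rw)/r$. The adjoint series therefore collapses to
\begin{equation*}
\ad_{\alpha_r/r}^{t}\bigl(\mathcal{E}_{-\mu}(uz)\bigr)=\Bigl(\tfrac{\zeta(ruz)}{r}\Bigr)^{t}\mathcal{E}_{tr-\mu}(uz),
\end{equation*}
while the analogue with $\mathcal{E}_{\mu}(-uz)$ produces $\bigl(-\zeta(ruz)/r\bigr)^t\mathcal{E}_{tr+\mu}(-uz)$, the extra sign coming from the oddness of $\zeta$ and the value $\det\begin{bmatrix} r & 0\\ m & -uz\end{bmatrix}=-ruz$.

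To finish I would substitute $\zeta(ruz)=ruz\,\mathcal{S}(ruz)$, so that $\bigl(\zeta(ruz)/r\bigr)^t=(uz)^t\mathcal{S}(ruz)^t$, and resum the Hadamard series:
\begin{equation*}
e^{\alpha_r/r}\,\mathcal{E}_{-\mu}(uz)\,e^{-\alpha_r/r}=\sum_{t=0}^{\infty}\frac{(uz)^t}{t!}\,\mathcal{S}(ruz)^t\,\mathcal{E}_{tr-\mu}(uz),
\end{equation*}
with the sign variant for $\mathcal{E}_{\mu}(-uz)$. Inserting these into the closed forms of the corollary, pulling $u^t$ (respectively $(-u)^t$) out front and absorbing the factor $z^t$ into the coefficient extraction via $[z^v](z^t f)=[z^{v-t}]f$, and finally swapping the order of the $v$- and $t$-summations yields exactly the four right-hand sides of \eqref{eq:conj1}--\eqref{eq:conj4}; the summation ranges $v\geq t$ (and $v\leq\mu$ in the finite cases) are forced automatically by $[z^{v-t}]$.

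The conceptual content sits entirely in the constant-determinant observation, which turns a priori complicated nested commutators into a geometric progression and lets the exponential resum in closed form. The only work that remains is routine bookkeeping: tracking the signs that distinguish the four cases and checking that the reindexing respects the stated summation bounds. I expect no genuine obstacle beyond this bookkeeping, and the formal power-series nature in $u$ makes all manipulations in $\End(V)\llbracket u\rrbracket$ legitimate.
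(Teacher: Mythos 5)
Your proposal is correct and follows essentially the same route as the paper's proof: the Hadamard expansion of the conjugation, the observation that $\ad_{\alpha_r}$ acts on any $\mathcal{E}_m(w)$ by the $m$-independent scalar $\zeta(rw)$ while shifting the index by $r$, the substitution $\zeta(ruz)=ruz\,\mathcal{S}(ruz)$ to pull out $u^t$ and shift the coefficient extraction to $[z^{v-t}]$, and the sign bookkeeping for the inverse operators (your appeal to the oddness of $\zeta$ is the paper's ``$\mathcal{S}$ is an even function''). The only cosmetic difference is that you make the constant-determinant observation explicit and treat all four cases uniformly, where the paper writes out \eqref{eq:conj1} and declares the rest analogous.
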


\begin{proof} Let us prove equation \eqref{eq:conj1}. Applying  the Hadamard lemma as in lemma~\ref{lem: firstconj} we find
\begin{align}
 e^{\frac{\alpha_{r}}{r}} \mathcal{O}_\mu (u) e^{-\frac{\alpha_{r}}{r}} &= \sum_{t=0}^\infty \frac{1}{ t! r^t} \ad_{\alpha_r}^t \Big( \sum_{v = 0}^{\infty} \frac{ (v + \mu - 1)!}{(\mu-1) !}[z^v] \mathcal{S}(uz)^{\mu - 1} \mathcal{E}_{- \mu}( uz) \Big)\\
&= \sum_{t=0}^\infty \sum_{v = 0}^{\infty} \frac{ (v + \mu - 1)!}{t! \, (\mu-1)! r^t}[z^v] \mathcal{S}(uz)^{\mu - 1} \ad_{\alpha_r}^t  \mathcal{E}_{- \mu}( uz)
\end{align}
By equation~\eqref{eq:commE}, we know
\begin{equation}
\ad_{\alpha_r} \mathcal{E}_{-\mu}(uz) = \zeta (ruz)\mathcal{E}_{r-\mu}(uz)
\end{equation}
Using this \( t\) times, we get that
\begin{align}
 e^{\frac{\alpha_{r}}{r}} \mathcal{O}_\mu (u) e^{-\frac{\alpha_{r}}{r}} &= \sum_{t=0}^\infty \sum_{v = 0}^{\infty} \frac{ (v + \mu - 1)!}{t! \, (\mu-1)! r^t}[z^v] \mathcal{S}(uz)^{\mu - 1} \zeta (ruz)^t  \mathcal{E}_{tr- \mu}( uz) \\
 &= \sum_{t=0}^\infty \sum_{v = 0}^{\infty} \frac{ (v + \mu - 1)!}{t! \, (\mu-1)!}u^t[z^{v-t}] \mathcal{S}(uz)^{\mu - 1} \mathcal{S} (ruz)^t  \mathcal{E}_{tr- \mu}( uz)
\end{align}
For the other equations, the calculation is completely analogous, using that \( \mathcal{S} \) is an even function.
This finishes the proof of the lemma.
\end{proof}

\subsection{\texorpdfstring{$ \mathcal{A}$}{A}-operators}

Let us now define the $\mathcal{A}$-operators for the $r$-orbifold monotone Hurwitz numbers as
\begin{align}\label{eq:FormulaForA-Operator}
& \mathcal{A}^{h}_{\langle \mu \rangle}(u,\mu) =  \sum_{t \in \Z} \sum_{v=t}^{\infty} \frac{ ([\mu ]  \! + \! \mu \! + \! 1)_{v-1}}{([\mu ] + 1)_t } [z^{v-t}] \mathcal{S}(uz)^{\mu - 1} \mathcal{S}(ruz)^{t+[\mu]} \mathcal{E}_{tr - \langle \mu \rangle}( uz)\\
\label{eq:FormulaForA-Operatorsigma}
& \mathcal{A}^{\sigma}_{\langle \mu \rangle}(u,\mu) =  \sum_{t = - \infty}^{\mu - [\mu ]} \sum_{v=t}^{\mu - [\mu ]} \frac{ ( \mu \mi [\mu] \mi v \+ 1)_{v-1}}{([\mu ] + 1)_t } [z^{v-t}] \mathcal{S}(uz)^{- \mu - 1} \mathcal{S}(r uz)^{t+[\mu]} \mathcal{E}_{tr - \langle \mu \rangle}(uz)
\end{align}
where \( \mu = r[\mu ] + \langle \mu \rangle \) denotes the euclidean division by $r$.

\begin{proposition}\label{AhOper}
\begin{align}
H^{\bullet, r, \leq}(u,\vec{\mu} ) &= u^{\frac{d}{r}} \prod_{i=1}^{l(\vec\mu )} \binom{\mu_i  + [\mu_i ]  }{\mu_i} \cord{ \prod_{i=1}^{l(\vec\mu)} \mathcal{A}^h_{\langle \mu_i \rangle }(u, \mu_i) } \label{eq:FormulaForGeneratingFunction} \\
H^{\bullet, r, <}(u,\vec{\mu} ) &= u^{\frac{d}{r}}  \prod_{i=1}^{l(\vec\mu )} 
\binom{\mu_i - 1}{[\mu_i]}
\cord{ \prod_{i=1}^{l(\vec\mu)} \mathcal{A}^{\sigma}_{\langle \mu_i \rangle }(u, \mu_i) } \label{eq:FormulaForGeneratingFunctionsigma}
\end{align}
where \( \mu = r[\mu ] + \langle \mu \rangle \) denotes the euclidean division by $r$.
\end{proposition}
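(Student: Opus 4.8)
The plan is to move the operators $\mathcal{D}^{(h)}(u)$ and $e^{\alpha_r/r}$ in the Fock-space formula \eqref{eq:GenSerFock} onto the vacuum, where they act as the identity, thereby turning the product $\prod_i \alpha_{-\mu_i}/\mu_i$ into a product of the conjugated operators of Lemma~\ref{conjall}, and then to match those term by term against the $\mathcal{A}$-operators \eqref{eq:FormulaForA-Operator}.

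First I would record that both $\mathcal{D}^{(h)}(u)$ and $e^{\alpha_r/r}$ fix the vacuum. The empty partition has empty content, so the stated eigenvalue property of $\mathcal{D}^{(h)}(u)$ gives $\mathcal{D}^{(h)}(u)|0\rangle=|0\rangle$; and $\alpha_r$ has positive energy $r$, hence $\alpha_r|0\rangle=0$ and $e^{\alpha_r/r}|0\rangle=|0\rangle$. Inserting $\mathcal{D}^{(h)}(u)^{-1}\mathcal{D}^{(h)}(u)$ between the factors $\alpha_{-\mu_i}$ and using $\mathcal{D}^{(h)}(u)|0\rangle=|0\rangle$ rewrites \eqref{eq:GenSerFock} as $(\prod_i\mu_i)^{-1}\langle 0|\,e^{\alpha_r/r}\prod_i\mathcal{O}^h_{\mu_i}(u)\,|0\rangle$, where $\mathcal{O}^h_{\mu_i}(u)=\mathcal{D}^{(h)}(u)\alpha_{-\mu_i}\mathcal{D}^{(h)}(u)^{-1}$ is the operator of Lemma~\ref{lem: firstconj}. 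Inserting $e^{-\alpha_r/r}e^{\alpha_r/r}$ after each such factor and using $e^{\alpha_r/r}|0\rangle=|0\rangle$ on the rightmost copy then yields $(\prod_i\mu_i)^{-1}\langle 0|\prod_i\big(e^{\alpha_r/r}\mathcal{O}^h_{\mu_i}(u)e^{-\alpha_r/r}\big)|0\rangle$, to which the explicit expression \eqref{eq:conj1} of Lemma~\ref{conjall} applies directly.

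Next I would compare \eqref{eq:conj1} with the definition of $\mathcal{A}^h_{\langle\mu\rangle}(u,\mu)$ via the index substitution $t\mapsto t+[\mu]$, $v\mapsto v+[\mu]$. Since $\mu=r[\mu]+\langle\mu\rangle$, the energy becomes $(t+[\mu])r-\mu=tr-\langle\mu\rangle$, while $\mathcal{S}(ruz)^{t+[\mu]}$ and the monomial extraction $[z^{v-t}]$ coincide with those in \eqref{eq:FormulaForA-Operator}; a short Pochhammer computation then shows that the scalar coefficient in \eqref{eq:conj1} equals $\mu\binom{\mu+[\mu]}{\mu}$ times the coefficient of $\mathcal{A}^h_{\langle\mu\rangle}(u,\mu)$, leaving over exactly one extra factor $u^{t+[\mu]}$ per term. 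Extending the range of $t$ to all of $\Z$ in \eqref{eq:FormulaForA-Operator} is harmless, since for $t<-[\mu]$ the reciprocal Pochhammer $1/([\mu]+1)_t$ vanishes (as noted after the definition of the Stirling numbers), so those extra terms contribute nothing. Thus $e^{\alpha_r/r}\mathcal{O}^h_\mu(u)e^{-\alpha_r/r}$ equals $\mu\binom{\mu+[\mu]}{\mu}$ times $\mathcal{A}^h_{\langle\mu\rangle}(u,\mu)$ with each summand additionally weighted by $u^{t+[\mu]}$.

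The final and most delicate step is to absorb these weights into the single prefactor $u^{d/r}$ ($d=|\mu|$). Here I would use energy conservation: since $\langle 0|\,\cdot\,|0\rangle$ annihilates every operator of nonzero energy, expanding the correlator over all multi-indices leaves only the terms with $\sum_i(t_ir-\langle\mu_i\rangle)=0$, i.e. $\sum_i t_i=|\mu|/r-\sum_i[\mu_i]$ (which forces the divisibility $r\mid|\mu|$ required for a nonzero Hurwitz number). On this support $\prod_i u^{t_i+[\mu_i]}=u^{|\mu|/r}=u^{d/r}$ is a constant, so it factors out of the sum and the surviving operator sum is exactly $\prod_i\mathcal{A}^h_{\langle\mu_i\rangle}(u,\mu_i)$. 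Collecting constants, $(\prod_i\mu_i)^{-1}\prod_i\mu_i\binom{\mu_i+[\mu_i]}{\mu_i}=\prod_i\binom{\mu_i+[\mu_i]}{\mu_i}$, which is \eqref{eq:FormulaForGeneratingFunction}. The strictly monotone identity \eqref{eq:FormulaForGeneratingFunctionsigma} follows verbatim from \eqref{eq:conj3}, \eqref{eq:FormulaForA-Operatorsigma} and the elementary-symmetric Pochhammer identity, which produces the factor $\mu\binom{\mu-1}{[\mu]}$ in place of $\mu\binom{\mu+[\mu]}{\mu}$. I expect the bookkeeping of $u$-powers in this last step to be the only real obstacle; the rest is a direct conjugation followed by a coefficient match.
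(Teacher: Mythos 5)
Your argument is correct, and apart from one step it coincides with the paper's own proof: the insertion of $\mathcal{D}^{(h)}(u)$ and $e^{\alpha_r/r}$ against the vacuum, the appeal to lemma~\ref{conjall}, the reindexing $t\mapsto t+[\mu_i]$, $v\mapsto v+[\mu_i]$, the Pochhammer computation producing the factors $\mu_i\binom{\mu_i+[\mu_i]}{\mu_i}$ (resp.\ $\mu_i\binom{\mu_i-1}{[\mu_i]}$), and the extension of the $t$-sum to all of $\Z$ via the vanishing of $1/([\mu_i]+1)_t$ for $t<-[\mu_i]$ are all exactly as in the paper. The one place you deviate is the step you yourself flagged: the paper absorbs the leftover weights $u^{t_i+[\mu_i]}$ by conjugating each factor with $u^{\mathcal{F}_1/r}$ (legitimate since $\mathcal{F}_1$ annihilates both vacuum and covacuum), which turns $e^{\alpha_r/r}\mathcal{O}^h_{\mu_i}(u)e^{-\alpha_r/r}$ into $u^{\mu_i/r}\,\mu_i\binom{\mu_i+[\mu_i]}{\mu_i}\,\mathcal{A}^h_{\langle\mu_i\rangle}(u,\mu_i)$ as an identity of operators, whereas you argue inside the correlator, using that only total-energy-zero monomials survive, so that $\prod_i u^{t_i+[\mu_i]}=u^{d/r}$ is constant on the support (strictly speaking what survives is the energy-zero part of $\prod_i\mathcal{A}^h_{\langle\mu_i\rangle}(u,\mu_i)$, but the discarded terms have vanishing expectation, so this is immaterial). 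Both mechanisms rest on the same fact, namely that the vacuum and covacuum have energy zero, and both are valid. What the paper's conjugation trick buys is the per-factor operator identity, which it reuses immediately afterwards to compute the inverse $\mathcal{A}$-operators and again in the residue analysis of theorem~\ref{Poly}; your correlator-level energy argument is marginally more elementary, but it identifies the product only under the vacuum expectation, so the operator-level statement would have to be re-derived in the conjugation form at the points where the paper needs it.
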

\begin{proof} Let us prove equation \eqref{eq:FormulaForGeneratingFunction}. Observe that both the operators $\tilde{\mathcal{E}}$ and $\alpha_r$ annihilate the vacuum. Hence inserting the operators $\Dh $ and $e^{\alpha_r}$ acting on the vacuum does not change the expression in equation~\eqref{eq:GenSerFock}:
\begin{equation}
H^{\bullet, r, \leq}(u,\vec\mu) = \cord{ \prod_{i=1}^n \frac{1}{\mu_i}e^{\frac{\alpha_{r}}{r}} \mathcal{D}^{(h)}(u)  \alpha_{-\mu_i} (\mathcal{D}^{(h)}(u))^{-1} e^{\frac{-\alpha_{r}}{r}}}
\end{equation}
The operators in the correlator are given by formula~\eqref{eq:conj1}, divided by \( \mu \). For every $i = 1, \dots, n$, rescale the $t$-sum in formula~\eqref{eq:conj1} by $ t_\text{new} \coloneqq t - [\mu_i]$ and the \( v\)-sum by \( v_\text{new} \coloneqq v - [\mu_i ]\), and conjugate by the operator $u^{\mathcal{F}_1/r}$. The latter operation has the effect of annihilating the factor $u^t$ and of creating a factor $u^{\mu_i /r} $ that can be written outside the sum. Extracting the binomial coefficient in equation~\eqref{eq:FormulaForGeneratingFunction} and extending the $t$-sum over all integers (since the Pochhammer symbol in the denominator is infinite for \( t < - [\mu_i] \)) proves equation~\eqref{eq:FormulaForGeneratingFunction}.\par
 The proof for equation~\eqref{eq:FormulaForGeneratingFunctionsigma} is analogous, starting from the operator given by formula~\eqref{eq:conj3}. After rescaling the $t$- and \( v\)-sums and conjugating with $u^{\mathcal{F}_1/r}$, we extract from the correlator the factor
\begin{equation}
\frac{(\mu - 1)!}{[\mu]!(\mu - [\mu] - 1 )!}
\end{equation}
Here, we can also extend the sum to \( + \infty \), because the Pochhammer symbol in the numerator is zero for the added terms.
Proposition~\ref{AhOper} is proved.
\end{proof}

%
%

\begin{proposition}
The inverses of the \( \mathcal{A}\)-operators (as elements of \( \End (V)\llbracket u \rrbracket \)) are given as follows:
\begin{align}\label{eq:A-inv}
\mathcal{A}^h_{\langle \mu \rangle}(u,\mu )^{-1} &= \sum_{t=0}^\mu  \sum_{v=t}^\mu \frac{(-1)^t(\mu \+ [\mu ])! \mu}{t! (\mu \mi v)![\mu ]!} [z^{v-t}] \mathcal{S}(uz)^{-\mu -1} \mathcal{S}(ruz)^t \mathcal{E}_{tr+\mu} (-uz)\\
\label{eq:A-invsigma}
\mathcal{A}^{\sigma}_{\langle \mu \rangle}(u,\mu )^{-1} &= \sum_{t=0}^\infty  \sum_{v=t}^\infty \frac{(-1)^t(v \+ [\mu ] -1)! \mu}{t! (\mu \mi [\mu] -1)![\mu ]! } [z^{v-t}] \mathcal{S}(uz)^{\mu -1} \mathcal{S}(ruz)^t \mathcal{E}_{tr+\mu} (-uz)
\end{align}
\end{proposition}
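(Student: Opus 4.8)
The plan is to avoid any direct manipulation of the $\mathcal{E}$-operators and instead exploit the fact that, by construction, each $\mathcal{A}$-operator is a central scalar multiple of a conjugate of the corresponding $\mathcal{O}$-operator, whose inverse is already in hand. Concretely, set $\mathcal{B}^{h}_\mu(u):=e^{\alpha_r/r}\mathcal{O}^h_\mu(u)e^{-\alpha_r/r}$ and $\mathcal{B}^{\sigma}_\mu(u):=e^{\alpha_r/r}\mathcal{O}^\sigma_\mu(u)e^{-\alpha_r/r}$, the operators whose expansions are \eqref{eq:conj1} and \eqref{eq:conj3}. Reading off the steps in the proof of Proposition~\ref{AhOper}---conjugation by $u^{\mathcal{F}_1/r}$, the rescaling of the $t$- and $v$-sums (which is pure relabelling and does not change the operator), and the extraction of the binomial prefactor---one sees that
\begin{equation*}
\mathcal{A}^{h}_{\langle\mu\rangle}(u,\mu)=\tfrac{1}{\mu}\tbinom{\mu+[\mu]}{\mu}^{-1}u^{-\mu/r}\,u^{\mathcal{F}_1/r}\,\mathcal{B}^{h}_\mu(u)\,u^{-\mathcal{F}_1/r},
\end{equation*}
and likewise for $\mathcal{A}^\sigma$ with prefactor $\tfrac{1}{\mu}\binom{\mu-1}{[\mu]}^{-1}u^{-\mu/r}$ and $\mathcal{B}^\sigma_\mu(u)$ in place of $\mathcal{B}^h_\mu(u)$.

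First I would record the structural consequence. Conjugation $X\mapsto u^{\mathcal{F}_1/r}Xu^{-\mathcal{F}_1/r}$ is an algebra automorphism, and the remaining factor $\tfrac1\mu\binom{\mu+[\mu]}{\mu}^{-1}u^{-\mu/r}$ is a central scalar; inverting the displayed identity therefore gives
\begin{equation*}
\mathcal{A}^{h}_{\langle\mu\rangle}(u,\mu)^{-1}=\mu\tbinom{\mu+[\mu]}{\mu}u^{\mu/r}\,u^{\mathcal{F}_1/r}\,\mathcal{B}^{h}_\mu(u)^{-1}\,u^{-\mathcal{F}_1/r},
\end{equation*}
and analogously in the $\sigma$ case. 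Now $\mathcal{B}^{h}_\mu(u)^{-1}=e^{\alpha_r/r}\mathcal{O}^h_\mu(u)^{-1}e^{-\alpha_r/r}$ and $\mathcal{B}^{\sigma}_\mu(u)^{-1}=e^{\alpha_r/r}\mathcal{O}^\sigma_\mu(u)^{-1}e^{-\alpha_r/r}$ are exactly the operators whose expansions Lemma~\ref{conjall} supplies in \eqref{eq:conj2} and \eqref{eq:conj4}. So nothing remains but to substitute those expansions and push the conjugation and the scalar through.

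The last step is the bookkeeping. Conjugation by $u^{\mathcal{F}_1/r}$ sends an energy-$n$ summand $\mathcal{E}_n(\pm uz)$ to $u^{-n/r}\mathcal{E}_n(\pm uz)$; in \eqref{eq:conj2} and \eqref{eq:conj4} the energy is $n=tr+\mu$, so the factor is $u^{-t}u^{-\mu/r}$, which combines with the $(-u)^t$ already present to produce $(-1)^t$ together with an overall $u^{-\mu/r}$. That $u^{-\mu/r}$ cancels the $u^{\mu/r}$ in the prefactor, so all fractional powers of $u$ disappear and the result indeed lies in $\End(V)\llbracket u\rrbracket$; multiplying through by $\mu\binom{\mu+[\mu]}{\mu}$ (resp.\ $\mu\binom{\mu-1}{[\mu]}$) and simplifying the product of this binomial with the factorial coefficient of \eqref{eq:conj2} (resp.\ \eqref{eq:conj4}) collapses the prefactor to the closed form in \eqref{eq:A-inv} (resp.\ \eqref{eq:A-invsigma}), the summation ranges being inherited unchanged from \eqref{eq:conj2} and \eqref{eq:conj4}. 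I expect this final simplification of the coefficients---together with the verification that the fractional $u$-powers cancel---to be the only delicate point; it is routine but must be carried out attentively. An alternative is to verify $\mathcal{A}\,\mathcal{A}^{-1}=\un$ directly by collapsing the double product with the commutation relation \eqref{eq:commE} and the parity of $\mathcal{S}$, but this essentially reproves the complete/elementary duality \eqref{eq:sympolrel} by hand and is best avoided.
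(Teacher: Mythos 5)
Your proposal is correct and takes essentially the same route as the paper: the paper's proof likewise starts from the identity $\mathcal{A}^h_{\langle\mu\rangle}(u,\mu)^{-1}=u^{\mu/r}\,\mu\binom{\mu+[\mu]}{\mu}\,u^{\mathcal{F}_1/r}e^{\frac{\alpha_r}{r}}\mathcal{O}^h_\mu(u)^{-1}e^{-\frac{\alpha_r}{r}}u^{-\mathcal{F}_1/r}$ (and its $\sigma$-analogue with $\mu\binom{\mu-1}{[\mu]}$), obtained by inverting the conjugation chain of proposition~\ref{AhOper}, then substitutes \eqref{eq:conj2} and \eqref{eq:conj4} and observes that conjugation by $u^{\mathcal{F}_1/r}$ kills the factor $u^t$ and produces the $u^{-\mu/r}$ cancelling the prefactor. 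You only spell out more explicitly the inversion step (conjugation being an automorphism, scalars being central) and the final coefficient bookkeeping, which the paper leaves implicit.
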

\begin{proof}
Let us prove equation \eqref{eq:A-inv}. By lemma \ref{conjall} and proposition \ref{AhOper},  the inverse operator is given by
\begin{equation}
\mathcal{A}^h_{\langle \mu \rangle}(u,\mu )^{-1} = u^{\mu/r} \mu\binom{\mu + [\mu ]}{\mu} u^{\mathcal{F}_1/r} e^{\frac{\alpha_r}{r}} \mathcal{O}^h_\mu (u)^{-1} e^{-\frac{\alpha_r}{r}} u^{-\mathcal{F}_1/r}
\end{equation}
The conjugation of $\mathcal{O}$ by the operator $e^{\alpha_r/r}$ is given by formula \eqref{eq:conj2}. The conjugation with \( u^{\mathcal{F}_1/r} \) annihilates the factor $u^t$ and produces a factor $u^{-\mu/r}$, which simplifies with $u^{\mu/r}$. This proves equation \eqref{eq:A-inv}. Equation \eqref{eq:A-invsigma} is proved in the same way starting from
\begin{equation}
\mathcal{A}^{\sigma}_{\langle \mu \rangle}(u,\mu )^{-1} = u^{\mu/r} \mu\binom{\mu - 1}{[\mu]} u^{\mathcal{F}_1/r} e^{\frac{\alpha_r}{r}} {\mathcal{O}^{\sigma}_\mu (u)}^{-1} e^{-\frac{\alpha_r}{r}} u^{-\mathcal{F}_1/r}
\end{equation}
and using the conjugation given by formula \eqref{eq:conj4}. The proposition is proved.
\end{proof}

\section{Quasi-polynomiality results}\label{sec:Poly}

In this section we state and prove the quasi-polynomiality property for monotone and strictly monotone orbifold Hurwitz numbers.
\begin{definition}
We define the connected operators \( \left\langle \prod_{i=1}^n \mathcal{A}_{\eta_i}(u, \mu_i) \right\rangle^\circ \) in terms of the disconnected correlator \( \left\langle \prod_{i=1}^n \mathcal{A}_{\eta_i}(u, \mu_i) \right\rangle^\bullet \) by means of the inclusion-exclusion formula, see, e.~g., \cite{DKOSS,DLPS}.
\end{definition}
The monotone Hurwitz numbers are expressed in terms of connected correlators as
\begin{align}
h^{\circ, r, \leq}_{g;\vec\mu} &= [u^{2g-2+l(\vec\mu)}].\prod_{i=1}^{l(\vec\mu )} \binom{\mu_i  + [\mu_i ]  }{\mu_i} \corc{ \prod_{i=1}^{l(\vec\mu)} \mathcal{A}^h_{\langle \mu_i \rangle }(u, \mu_i) }\\
h^{\circ, r, <}_{g;\vec\mu} &=  [u^{2g-2+l(\vec\mu)}]. \prod_{i=1}^{l(\vec\mu)} 
\binom{\mu_i - 1}{[\mu_i ]}
\corc{ \prod_{i=1}^{l(\vec\mu)} \mathcal{A}^{\sigma}_{\langle \mu_i \rangle }(u, \mu_i) }
\end{align}
We are now ready to state and prove the main result of the paper.
\begin{theorem}[Quasi-polynomiality for monotone and strictly monotone orbifold Hurwitz numbers] \label{Poly} For \( 2g - 2 + l(\vec\mu ) \geq0 \), the monotone and strictly monotone orbifold Hurwitz numbers can be expressed as follows:
\begin{align}
h^{\circ, r, \leq}_{g;\vec\mu} &= \prod_{i=1}^{l(\vec\mu )} \binom{\mu_i  + [\mu_i ]  }{\mu_i} P^{\langle \vec\mu \rangle}_{\leq}(\mu_1, \dots, \mu_{l(\vec\mu)}) \label{eq:Polyh} \\
h^{\circ, r, <}_{g;\vec\mu} &=   \prod_{i=1}^{l(\vec\mu )} 
\binom{\mu_i - 1}{[\mu_i ]}
P^{\langle \vec\mu\rangle}_{<}(\mu_1, \dots, \mu_{l(\vec\mu)}) \label{eq:Polysigma}
\end{align}
where $P^{\langle \vec\mu\rangle}_{<}$ and $P^{\langle \vec\mu\rangle}_{\leq}$ are polynomials of degree $3g-3+l(\vec\mu)$ depending on the parameters $\langle \mu_1 \rangle, \dots \langle \mu_{l(\vec\mu)} \rangle $ and \( \mu = r[\mu ] + \langle \mu \rangle \) denotes the euclidean division by $r$.
\end{theorem}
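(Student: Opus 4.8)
The plan is to take Proposition~\ref{AhOper} and the two displayed formulas preceding the theorem as the starting point: there the nonpolynomial prefactors $\prod_i\binom{\mu_i+[\mu_i]}{\mu_i}$ and $\prod_i\binom{\mu_i-1}{[\mu_i]}$ are already split off, so the theorem reduces to the single statement that, for the monotone case,
\[
[u^{2g-2+n}]\,\corc{\textstyle\prod_{i=1}^n \mathcal{A}^h_{\langle\mu_i\rangle}(u,\mu_i)}
\]
(and its strictly monotone analogue with $\mathcal{A}^\sigma$) is a polynomial in $\mu_1,\dots,\mu_n$ of degree $3g-3+n$ whose coefficients depend only on the residues $\langle\mu_i\rangle$, on $g$, and on $r$. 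I would treat the two cases in parallel, since the only differences are $\mathcal{S}(uz)^{\mu-1}$ versus $\mathcal{S}(uz)^{-\mu-1}$, the finiteness of some summation ranges, and signs.

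First I would expand each $\mathcal{A}$-operator according to~\eqref{eq:FormulaForA-Operator}, pull the scalar coefficients out of the correlator, and reduce to a sum over $(t_i,v_i)$ of products of Pochhammer ratios, of $z_i$-coefficient extractions of $\prod_i\mathcal{S}(uz_i)^{\mu_i-1}\mathcal{S}(ruz_i)^{t_i+[\mu_i]}$, and of the connected $\mathcal{E}$-correlator $\corc{\prod_i\mathcal{E}_{a_i}(uz_i)}$ with $a_i=t_ir-\langle\mu_i\rangle$. Because every variable enters only through the products $uz_i$, extracting $[z_i^{v_i-t_i}]$ fixes the total $u$-degree of a term to $\sum_i(v_i-t_i)$, so only terms with $\sum_i(v_i-t_i)=2g-2+n$ survive; in particular each $m_i:=v_i-t_i$ lies in $\{0,\dots,2g-2+n\}$. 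The crucial finiteness input is then that energy conservation forces $\sum_i a_i=0$, i.e.\ $r\sum_i t_i=\sum_i\langle\mu_i\rangle$, while the denominator Pochhammer $([\mu_i]+1)_{t_i}$ makes the coefficient vanish for $t_i<-[\mu_i]$; together these confine each $t_i$ to a finite interval whose endpoints are linear in the $[\mu_j]$. I expect this step, finiteness of the $t$-sums with $\mu$-linear ranges, to be the organizing observation that makes everything else work.

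Next I would compute the connected $\mathcal{E}$-correlator by iterating the commutation relation~\eqref{eq:commE}, writing it as a finite sum over cyclic orderings of ratios of $\zeta$-functions, i.e.\ a Laurent series in $u$ with overall factor $1/\zeta(u\sum_i z_i)$. Polynomiality in $\mu$ can then be read off source by source: since $\mathcal{S}$ is even with $\mathcal{S}(0)=1$, the extractions $[z_i^{k}]\mathcal{S}(uz_i)^{\mu_i-1}$ and $[z_i^{k}]\mathcal{S}(ruz_i)^{t_i+[\mu_i]}$ are polynomials in $\mu_i$ (respectively in $t_i+[\mu_i]$) of degree at most $k/2$; the $\mathcal{E}$-correlator is polynomial in the energies $a_i$, hence in the $t_i$ and the $\langle\mu_i\rangle$; and the Pochhammer ratio, once the residue $\langle\mu_i\rangle$ is fixed and one writes $[\mu_i]=(\mu_i-\langle\mu_i\rangle)/r$, is a product of consecutive integers that is polynomial in $t_i$, its $\mu$-dependent factorial factors being exactly those that combine with the binomial prefactor already extracted in Proposition~\ref{AhOper} so that the net dependence is polynomial. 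Summing such a polynomial summand over the finite ranges of the $t_i$, whose endpoints are linear in $\mu$, again produces a polynomial in $\mu$, by the elementary fact that $\sum_{t=A(\mu)}^{B(\mu)}Q(t,\mu)$ is polynomial in $\mu$ when $A,B$ are linear and $Q$ is polynomial. That the coefficients depend on $\mu$ only through $\langle\mu_i\rangle$ follows because the energy labels, the operator structure, and the Pochhammer residues all see $\mu_i$ only through $\langle\mu_i\rangle$.

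The hard part is the sharp degree bound $3g-3+n$. Here I would keep careful track of how $\mu$-degree accumulates: the even function $\mathcal{S}$ contributes at most one half-unit of $\mu$-degree per unit of $u$-degree, the $\mathcal{E}$-correlator contributes through the energies, and the summation over the $\mu$-linear $t$-ranges raises the degree in a controlled way, with the constraint $\sum_i(v_i-t_i)=2g-2+n$ tying these contributions together. The two things I expect to require genuine care are: first, verifying that the factorial-length products in the Pochhammer ratios truly recombine to leave an honest polynomial rather than a rational function; and second, showing that the accumulated degree is exactly $3g-3+n=\dim\overline{\mathcal{M}}_{g,n}$ and not merely bounded by it. I would organize the degree count around the Euler characteristic $2g-2+n$ carried by the $u$-grading, so that the final degree $3g-3+n$ emerges transparently. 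The strictly monotone case is then handled by the same argument applied to~\eqref{eq:FormulaForA-Operatorsigma}, replacing complete by elementary symmetric polynomials throughout.
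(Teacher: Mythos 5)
Your opening coincides with the paper's: reduce via Proposition~\ref{AhOper} to polynomiality of the connected correlator, and observe that energy conservation $\sum_i(t_ir-\eta_i)=0$ together with the vanishing of $1/([\mu_i]+1)_{t_i}$ for $t_i<-[\mu_i]$ makes the $t$-sums finite, with ranges linear in the $[\mu_j]$. But your next step has a genuine gap: the summands are \emph{not} polynomial in $[\mu_1]$. For $t_1>0$ the factor $1/([\mu_1]+1)_{t_1}=1/\big(([\mu_1]+1)([\mu_1]+2)\cdots([\mu_1]+t_1)\big)$ is a rational function of $[\mu_1]$ with simple poles at negative integers, and the numerator Pochhammer $([\mu_1]+\mu_1+1)_{v_1-1}$ does not in general vanish there (for instance, at $[\mu_1]=-1$ it is nonzero whenever $v_1\leq r-\eta_1+1$). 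So your elementary fact about summing a polynomial $Q(t,\mu)$ over ranges with linear endpoints does not apply: the coefficient of a fixed power of $u$ is a priori only a \emph{rational} function of $[\mu_1]$. This is not a defect that a finer term-by-term analysis can remove, because the statement you are implicitly proving is false: the \emph{disconnected} correlator genuinely has poles. Indeed, by the paper's lemma~\ref{lem:A-res}, its residue at $[\mu_1]=-l$ is a nonzero multiple of $\langle e^{\alpha_r/r}\mathcal{D}(u)\,\alpha_{lr-\eta_1}\prod_{i\geq 2}\alpha_{-\mu_i}\rangle^{\bullet}$, which is nonzero whenever $\mu_i=lr-\eta_1$ for some $i\geq 2$. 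Since your argument never uses connectedness (you never invoke the inclusion-exclusion structure), it would equally apply to the disconnected correlator, and therefore cannot be correct. Relatedly, your suggestion that the factorial factors ``combine with the binomial prefactor already extracted in Proposition~\ref{AhOper}'' is circular: that prefactor has already been pulled out and is not available to repair the correlator, and even multiplying it back in, $\binom{\mu+[\mu]}{\mu}\big/([\mu]+1)_t=(\mu+[\mu])!\big/\big(\mu!\,([\mu]+t)!\big)$ is still not a polynomial in $[\mu]$.

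The missing idea, which is the heart of the paper's proof, is pole cancellation at the level of the \emph{connected} correlator: (i) regard the coefficient of each power of $u$ as a rational function of $\nu_1=[\mu_1]$, all of whose poles are simple and located at negative integers; (ii) compute the residue of the $\mathcal{A}$-operator at $\nu_1=-l$ and identify it, via lemma~\ref{lem:A-res}, as a constant multiple of the \emph{inverse} operator $\mathcal{A}_{-\eta_1}(u,lr-\eta_1)^{-1}$, so that the residue of the disconnected correlator collapses, as in equation~\eqref{eq:rescord}, to a correlator containing $\alpha_{lr-\eta_1}$, which is nonzero only when some $\mu_i$ equals $lr-\eta_1$; (iii) verify that in the inclusion-exclusion formula this contribution is cancelled exactly by the residue of $\langle\mathcal{A}_{\eta_1}(u,\mu_1)\mathcal{A}_{\eta_i}(u,\mu_i)\rangle^{\bullet}\cdot\langle\prod_{j\neq 1,i}\mathcal{A}_{\eta_j}(u,\mu_j)\rangle^{\bullet}$. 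Hence the connected correlator is a rational function without poles, i.e.\ a polynomial. Your treatment of the degree bound is also only a sketch, but it points in the right direction (the paper's count: the Pochhammer ratios contribute degree $2g-2$, the leading term of the connected $\mathcal{E}$-correlator degree $n-1$, and the $\mathcal{S}$-corrections degree $g$, totalling $3g-3+n$); the essential gap is the polynomiality step above.
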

\begin{remark}
The two statements of theorem \ref{Poly} confirm respectively conjecture 23 in \cite{DoKarev} and conjecture 12 in \cite{DoManescu}. 
Note that the small difference in the conjecture 23 does not affect quasi-polynomiality since the polynomials $P_{\leq}$ depend on the parameters $\langle \mu \rangle$. Conjecture 12 is stated for Grothendieck dessin d'enfants, which indeed correspond to strictly monotone Hurwitz numbers by the Jucys correspondence (see for example \cite{ALS} for details).
\end{remark}
\begin{remark}\label{rem:mu-floormu-polynomiality}
	Note that since we allow the coefficients of the polynomials  $P^{\langle \vec\mu \rangle}_{\leq}$ and $P^{\langle \vec\mu \rangle}_{<}$ to depend on $\langle \vec\mu \rangle$, we can equivalently consider them as polynomials in $[\mu_1],\dots,[\mu_n]$, $n:=l(\vec\mu)$. The latter way is more convenient in the proof. 
\end{remark}

\begin{proof} We will show that,
for fixed \( \eta_i \), the connected correlator $ \left\langle \prod_{i=1}^n \mathcal{A}_{\eta_i}(u, \mu_i) \right\rangle^{\circ}$ is a power series in \( u\) with polynomial coefficients in all \( \mu_i \), for both the operators $\mathcal{A}^h$ and $\mathcal{A}^{\sigma}$.
As these are symmetric functions in the \( \mu_i \), it is sufficient to prove polynomiality in \( \mu_1\). Indeed, if a symmetric function $P(\mu_1, \dots, \mu_n)$ is polynomial in the first variable, it can be written in the form $P(\mu_1, \dots, \mu_n) = \sum_{k=0}^d  a_k(\mu_2, \dots, \mu_n) \mu_1^k$. To check that each coefficient of $P$ is also polynomial in $\mu_2$, we can compute the values of $P$ at the points $\mu_1=1,\dots,d+1$ and show that these values are polynomial in $\mu_2$. But the values of $P$ at these particular values of $\mu_1$ can be computed using the symmetry of $P$ as $P(\mu_2,\dots,\mu_n,\mu_1)$, so they are polynomial in $\mu_2$. Proceeding this way, we establish polynomiality of $P$ in all arguments.

 We will first consider the disconnected correlator $ \left\langle \prod_{i=1}^n \mathcal{A}_{\eta_i}(u, \mu_i) \right\rangle^{\bullet} $ where, setting \( \mu_i = \nu_i r + \eta_i\) to stress the independence the parameters $\nu_i=[\mu_i]$ and $\eta_i=\langle \mu_i \rangle$ here, the operator $\mathcal{A}$ is either
\begin{equation}
\mathcal{A}^h_{\eta_i}(u,\mu_i) =  \sum_{t_i \in \Z}^{\infty} \sum_{v_i=t_i}^{\infty} \frac{ (\nu_i  \! + \! \mu_i \! + \! 1)_{v_i - 1}}{(\nu_i + 1)_{t_i} } [z^{v_i-t_i}] \mathcal{S}(uz)^{\mu_i - 1} \mathcal{S}(ruz)^{t_i+\nu_i} \mathcal{E}_{t_ir - \eta_i}(uz)
\end{equation}
in the monotone case or
\begin{equation}
\mathcal{A}^{\sigma}_{\eta_i}(u,\mu_i) =  \sum_{t_i = - \infty}^{\mu_i} \sum_{v_i=t_i}^{\mu_i} \frac{ ( \mu_i \! - \! \nu_i - \! (v_i -1))_{v_i-1}}{(\nu_i + 1)_{t_i} } [z^{v_i-t_i}] \mathcal{S}(uz)^{-\mu_i - 1} \mathcal{S}(r uz)^{t+\nu_i} \mathcal{E}_{tr - \eta_i}(uz)
\end{equation}
in the strictly monotone case. 
In both cases, if we expand the product of all the \( t\)-sums in the disconnected correlator, we get the condition \( \sum_{i=1}^{l(\mu )} (t_i r - \eta_i) = 0\), as the total energy of the operators in
a given monomial must be zero. Furthermore, \( t_1r - \eta_1 \geq 0 \), since the first \( \mathcal{E} \) would get annihilated by the covacuum otherwise, 
and \( t_i \geq -\nu_i \) (otherwise the symbol $1/(\nu_i + 1)_{t_i}$ vanishes), so if we fix \( \eta_1, \nu_2, \eta_2, \dotsc, \nu_n, \eta_n \), the \( t_1 \)-sum becomes finite. Since the power of \( u\) is fixed, it also gives a bound on the degree in $\nu_1$. So the coefficient of a particular power of $u$ in the disconnected correlator $ \left\langle \prod_{i=1}^n \mathcal{A}_{\eta_i}(u, \mu_i) \right\rangle^{\bullet} $ is a rational function in $\nu_1$.

Because the coefficients are rational functions, we can extend them to the complex plane, and it makes sense to talk about their poles. The only possible poles must come from \( \frac{1}{(\nu +1)_t} \) (because we only look at non-negative exponents of \( u\)), and all of these poles are simple. Let us calculate the residue at \( \nu = - l\), for \( l = 1, 2, \dotsc \)
\begin{lemma}\label{lem:A-res}
The residue of the \( \mathcal{A} \)-operators is, up to a linear multiplicative constant, equal to the inverse of the operator with a negative argument. More precisely,
\begin{align}
\Res_{\nu = - l} \mathcal{A}^h_\eta (u, \nu r + \eta ) &= \frac{1}{l r - \eta} \mathcal{A}^h_{-\eta}(u,lr - \eta )^{-1} &&\text{if } \eta \neq 0 \label{eq:res1}\\
\Res_{\nu = - l} \mathcal{A}^h_0 (u, \nu r) &= \frac{1}{lr(r + 1)} \mathcal{A}^h_0 (u,lr)^{-1} && \text{if } \eta = 0 \label{eq:res2}\\
\Res_{\nu = - l} \mathcal{A}^{\sigma}_\eta (u, \nu r + \eta ) &= \frac{1}{lr - \eta} \mathcal{A}^{\sigma}_{-\eta}(u,lr - \eta )^{-1} &&\text{if } \eta \neq 0 \label{eq:res3}\\
\Res_{\nu = - l} \mathcal{A}^{\sigma}_0 (u, \nu r) &= \frac{1}{lr(r-1)} \mathcal{A}^{\sigma}_0 (u,lr)^{-1} && \text{if } \eta = 0 \label{eq:res4}
\end{align}
\end{lemma}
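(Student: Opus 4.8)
The plan is to isolate the unique source of $\nu$-dependence capable of producing a pole and to evaluate everything else at $\nu=-l$. As noted just before the statement, for a fixed power of $u$ the coefficient of each operator $\mathcal{E}_{tr-\eta}(uz)$ is a rational function of $\nu$, and the only factor that is not polynomial in $\nu$ is $1/(\nu+1)_t$. Hence the pole at $\nu=-l$ receives contributions precisely from the terms with $t\geq l$, and it is simple. First I would record
\[
\Res_{\nu=-l}\frac{1}{(\nu+1)_t}=\frac{(-1)^{l-1}}{(l-1)!\,(t-l)!},
\]
and then substitute $\nu=-l$ into the remaining (entire) factors.

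The decisive observation is that at $\nu=-l$ the argument becomes $\mu=\nu r+\eta=-(lr-\eta)$, so, writing $\mu'=lr-\eta$, every exponent flips sign: in the monotone case $\mathcal{S}(uz)^{\mu-1}\mapsto\mathcal{S}(uz)^{-\mu'-1}$ and $\mathcal{S}(ruz)^{t+\nu}\mapsto\mathcal{S}(ruz)^{t-l}$, while in the strictly monotone case $\mathcal{S}(uz)^{-\mu-1}\mapsto\mathcal{S}(uz)^{\mu'-1}$. This is exactly the shape of the $\mathcal{S}$-factors in the inverse operators \eqref{eq:A-inv} and \eqref{eq:A-invsigma}. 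I would then reindex both summations by $t\mapsto t-l$, $v\mapsto v-l$ so that they start at $0$; this turns $\mathcal{E}_{tr-\eta}(uz)$ into $\mathcal{E}_{(t-l)r+\mu'}(uz)$, matching the energy index $\mathcal{E}_{tr+\mu'}$ of the inverse. Finally, since $\mathcal{S}$ is even, the substitution $z\mapsto -z$ converts $\mathcal{E}(uz)$ into $\mathcal{E}(-uz)$ at the cost of a sign $(-1)^{v-t}$ produced by the coefficient extraction $[z^{v-t}]$; this is precisely the $-uz$ occurring in \eqref{eq:A-inv}/\eqref{eq:A-invsigma}, so after these steps the residue is term-by-term of the same monomial type as the claimed inverse.

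What remains is to match the scalar prefactors. I would evaluate the numerator Pochhammer at $\nu=-l$ as a ratio of factorials; in the monotone case a direct computation gives $(\nu+\mu+1)_{v-1}\big|_{\nu=-l}=(-1)^{v-1}(\mu'+l-1)!/(\mu'-v+l)!$, with the analogous elementary-symmetric expression in the strictly monotone case. Multiplying this by the residue $\tfrac{(-1)^{l-1}}{(l-1)!(t-l)!}$ and by the sign $(-1)^{v-t}$, and comparing with the coefficient $\tfrac{(-1)^{t}(\mu+[\mu])!\,\mu}{t!(\mu-v)![\mu]!}$ (respectively its counterpart) in \eqref{eq:A-inv}, all powers of $-1$ collapse and the factorials telescope, leaving a single overall constant. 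The case split enters exactly here: the floor that the inverse-operator formula must use is the honest $\lfloor\mu'/r\rfloor$, which equals $l-1$ when $\eta\neq0$ but jumps to $l$ when $\eta=0$. Reading off the constant in each case yields $\frac{1}{lr-\eta}$ for $\eta\neq0$, and the anomalous $\frac{1}{lr(r+1)}$ (monotone) or $\frac{1}{lr(r-1)}$ (strictly monotone) for $\eta=0$.

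The main obstacle is precisely this last bookkeeping step: keeping the signs and the rising and falling factorials consistent while recognizing that the residue-class label $-\eta$ forces the honest floor $\lfloor(lr-\eta)/r\rfloor$ in \eqref{eq:A-inv}, and that this floor behaves discontinuously at $\eta=0$. Getting that discontinuity right is what separates \eqref{eq:res1} and \eqref{eq:res3} from \eqref{eq:res2} and \eqref{eq:res4}; the extra factor $r\pm1$ ultimately reflects the $\pm$ in the exponent of $\mathcal{S}(uz)^{\pm\mu-1}$, i.e. the difference between complete and elementary symmetric polynomials. The strictly monotone case is then dispatched by the same four steps with the dual Pochhammer identity, using once more that $\mathcal{S}$ is even.
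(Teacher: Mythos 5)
Your proposal is correct and follows essentially the same route as the paper's proof: isolate the simple pole coming from $1/(\nu+1)_t$ in the terms with $t\geq l$, evaluate the remaining entire factors at $\nu=-l$ (flipping the $\mathcal{S}$-exponents since $\mu\mapsto -(lr-\eta)$), use evenness of $\mathcal{S}$ together with the sign $(-1)^{v-t}$ from coefficient extraction to produce the arguments $-uz$, reindex $t\mapsto t-l$, $v\mapsto v-l$, and compare with the inverse-operator formulas, with the case split coming exactly from $\lfloor (lr-\eta)/r\rfloor = l-1+\delta_{\eta 0}$. The paper packages the Pochhammer residue as $(1-l)_{l-1}(t-l)!$ rather than your explicit $(-1)^{l-1}(l-1)!(t-l)!$, but this is the identical computation, and your reading of the constants $\tfrac{1}{lr-\eta}$, $\tfrac{1}{lr(r+1)}$, $\tfrac{1}{lr(r-1)}$ agrees with the paper's.
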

\begin{proof}
Let us prove equations \eqref{eq:res1} and \eqref{eq:res2} together. The only contributing terms have \( t \geq l\), so we calculate
\begin{align}
\Res_{\nu = -l} &\mathcal{A}^h_\eta (u, \mu )\\
&= \sum_{t = l}^{\infty} \sum_{v=t}^{\infty} \frac{ (\nu  \+ \mu \! + \! 1)_{v - 1} (\nu \+ l)}{(\nu \+ 1)_t } [x^{v-t}] \mathcal{S}(xu)^{\mu - 1} \mathcal{S}(rx u)^{t+\nu} \mathcal{E}_{tr - \eta}(x u) \bigg|_{\nu = -l}\\
&= \sum_{t = l}^{\infty} \sum_{v=t}^{\infty} \frac{(\mu \mi l \+ 1)_{v - 1}}{(1 \mi l)_{l - 1} (t \mi l)!} (-1)^{v-t}[x^{v-t}] \mathcal{S}(-xu)^{\mu - 1} \mathcal{S}(-rxu)^{t-l} \mathcal{E}_{tr - \eta}(-xu)\\
&= \sum_{t = 0}^{\infty} \sum_{v=t}^{\infty} \frac{(-1)^{l+v-t-1} (\mu \mi l \+ 1)_{v + l - 1}}{(l \mi 1)! t!}[x^{v-t}] \mathcal{S}(xu)^{\mu - 1} \mathcal{S}(rxu)^t \mathcal{E}_{tr -\mu}(-xu)
\end{align}
where we kept writing \( \mu \) for \( -lr + \eta \). As this is negative, however, it makes sense to rename it \( \mu = -\lambda \). Substituting and collecting the minus signs from the Pochhammer symbol, we get
\begin{align}
\Res_{\nu = -l} &\mathcal{A}^h_\eta (u, \mu )\\
&= \sum_{t = 0}^\lambda \sum_{v=t}^\lambda \frac{(-1)^t (\lambda \+ 1 \mi v)_{v + l - 1}}{(l \mi 1)! t!}[x^{v-t}] \mathcal{S}(ux)^{- \lambda - 1} \mathcal{S}(rux)^t \mathcal{E}_{tr +\lambda}(-ux)\\
&= \sum_{t = 0}^\lambda \sum_{v=t}^\lambda \frac{(-1)^t (\lambda \+ l \mi 1)!}{(l \mi 1 )! t! (\lambda \mi v)!}[x^{v-t}] \mathcal{S}(ux)^{- \lambda - 1} \mathcal{S}(rux)^t \mathcal{E}_{tr +\lambda}(-ux)
\end{align}
Because \( \lambda = lr - \eta \), we have \( l = [\lambda ] + 1-\delta_{\eta 0} \) and \( \eta = - \langle \lambda \rangle \). Recalling equation \eqref{eq:A-inv}, we obtain the result. Equations \eqref{eq:res3} and \eqref{eq:res4} follow from the analogous computation of the residue and the comparison with equation \eqref{eq:A-invsigma}.
\end{proof}
In the following we will use the notation $\mathcal{A}$ and $\mathcal{D}$ without specifying the symmetric polynomial chosen, since the argument is valid for both the choices of $(\mathcal{A}^h, \mathcal{D}^h)$ and $(\mathcal{A}^\sigma, \mathcal{D}^{\sigma})$. Lemma \ref{lem:A-res} implies that we can express the residues in \( \mu_1 \) 
of the disconnected correlator as follows:
\begin{equation}
\Res_{\nu_1 = - l}\cord{ \prod_{i=1}^n \mathcal{A}_{\eta_i}(u, \mu_i)} = 
c(l,\eta_1 )
\cord{\mathcal{A}_{-\eta_1}(u,lr-\eta_1 )^{-1} \prod_{i=2}^n \mathcal{A}_{\eta_i}(u,\mu_i)}.
\end{equation}
where \( c(l,\eta_1 )\) is the coefficient in lemma~\ref{lem:A-res}.
 Recalling equations~\eqref{eq:GenSerFock} and \eqref{eq:FormulaForGeneratingFunction} for the monotone case and equations~\eqref{eq:GenSerFocksigma} and \eqref{eq:FormulaForGeneratingFunctionsigma} for the strictly monotone case and realising that the inverse \( \mathcal{A}\)-operator is given by the same conjugations as the normal \( \mathcal{A}\)-operator, but starting from \( \alpha_\mu \) instead of \( \alpha_{-\mu} \), we can see that this reduces to
\begin{equation}\label{eq:rescord}
\Res_{\nu_1 = - l}\cord{ \prod_{i=1}^n \mathcal{A}_{\eta_i}(u, \mu_i)} = C\cord{ e^{\frac{\alpha_{r}}{r}} \mathcal{D}(u) \alpha_{lr-\eta_1} \prod_{i=2}^n \alpha_{-\mu_i} }
\end{equation}
for some specific coefficient \( C\) that depends only on $l$ and $\eta_1$.\par
Because \( [\alpha_k, \alpha_l] = k\delta_{k+l,0} \), and \( \alpha_{lr-\eta_1} \) annihilates the vacuum, this residue is zero unless one of the \( \mu_i \) equals \( lr - \eta_1 \) for \( i \geq 2\).\par
Now return to the connected correlator. It can be calculated from the disconnected one by the inclusion-exclusion principle, so in particular it is a finite sum of products of disconnected correlators. Hence the connected correlator is also a rational function in $\nu_1$, and all possible poles must be inherited from the disconnected correlators. So let us assume \( \mu_i = lr-\eta_1 \) for some \( i \geq 2\). Then we get a contribution from \eqref{eq:rescord}, but this is canceled exactly by the term coming from
\begin{align}
\Res_{\nu_1 = - l}&\cord{\mathcal{A}_{\eta_1}(u, \mu_1)\mathcal{A}_{-\eta_1}(u,lr-\eta_1) }\cord{\prod_{\substack{2 \leq j \leq n \\ j \neq i}} \mathcal{A}_{\eta_j}(u, \mu_j)} \\
&= C\cord{ e^{\frac{\alpha_{r}}{r}} \mathcal{D}(u) \alpha_{lr-\eta_1} \alpha_{-(lr-\eta_1)} }\cord{ e^{\frac{\alpha_{r}}{r}} \mathcal{D}(u) \alpha_{lr-\eta_1} \prod_{\substack{2 \leq j \leq n \\ j \neq i}} \alpha_{-\mu_j} }
\end{align}
Hence, the connected correlator has no residues, which proves it is polynomial in \( \nu_1 \). Therefore, it is also a polynomial in $\mu_1$, see remark~\ref{rem:mu-floormu-polynomiality}. This completes the proof of the polynomiality. 

Now, once we know that the coefficient of $u^{2g-2+n}$, $2g-2+n\geq 0$, of a connected correlator $ \left\langle \prod_{i=1}^n \mathcal{A}_{\eta_i}(u, \mu_i) \right\rangle^{\circ} $ is a polynomial in $\mu_1,\dots,\mu_n$, or, equivalently, in $\nu_1,\dots,\nu_n$, we can compute its degree. The argument is the same in both cases, monotone and strictly monotone, so let us use the formulas for the $\mathcal{A}^h$-operators. We can compute the degree of the connected correlator considered as a rational function. Once we know that it is a polynomial, we obtain the degree of the polynomial.  For the computation of the degree in $\nu_1,\dots,\nu_n$ it is sufficient to observe that $\sum_{i=1}^n (v_i-t_i) = 2g-2+n$, therefore  $\prod_{i=1}^n  (\nu_i  \! + \! \mu_i \! + \! 1)_{v_i - 1}/(\nu_i + 1)_{t_i}  $ has degree $2g-2$. Moreover, the leading term in $ \left\langle \prod_{i=1}^n \mathcal{E}_{t_ir-\eta_i}(uz)\right\rangle^{\circ} $ has degree $n-2$ in $uz$ and $n-1$ in $\nu_1,\dots,\nu_n$, and the coefficient of $(uz)^{2g}$ in  the product of $\mathcal{S}\cdot \prod_{i=1}^n\mathcal{S}(uz)^{\mu_i - 1} \mathcal{S}(ruz)^{t_i+\nu_i}$, where $\mathcal{S}$ without an argument denotes the $\mathcal{S}$-functions coming from the connected correlator $ \left\langle \prod_{i=1}^n \mathcal{E}_{t_ir-\eta_i}(uz)\right\rangle^{\circ} $ divided by its leading term, is a polynomial of degree $2g/2=g $ in $\nu_1,\dots,\nu_n$. So, the total degree in $\nu_1,\dots,\nu_n$ is equal to $2g-2+n-1+g=3g-3+n$. 

This completes the proof of the theorem.
\end{proof}

\subsection{Quasi-polynomiality for the usual orbifold Hurwitz numbers}

In the case of the usual orbifold Hurwitz numbers, quasi-polynomiality was already known, see \cite{BHLM,DLN,DLPS}. However, all known proofs use either the Johnson-Pandharipande-Tseng formula ~\cite{JPT} (the ELSV formula~\cite{ELSV} for $r=1$) or very subtle analytic tools due to Johnson~\cite{Joh2} (Okounkov-Pandharipande~\cite{OP} for $r=1$). In the second approach, presented in~\cite{DKOSS,DLPS}, the analytic continuation to the integral points outside the area of convergence requires an extra discussion, which is so far omitted. So, it would be good to have a more direct combinatorial proof of quasi-polynomiality for usual orbifold Hurwitz numbers, and we will reprove it here using the same technique as for the (strictly) monotone orbifold Hurwitz numbers. 

\begin{definition}
The usual orbifold \( \mathcal{A}\)-operators are given by
\begin{equation}
\mathcal{A}_{\langle \mu \rangle}(u,\mu ) \coloneqq r^{-\frac{\langle \mu \rangle}{r}} \mathcal{S}(ru\mu )^{[\mu ]} \sum_{t \in \Z} \frac{\mathcal{S} (ru\mu )^t \mu^{t-1}}{([\mu ]+1)_t} \mathcal{E}_{tr-\langle \mu \rangle}(u\mu )
\end{equation}
\end{definition}
\begin{remark}
Up to slightly different notation and a shift by one in the exponent of \( \mu \), these are the \( \mathcal{A}\)-operators of \cite{DLPS}.
\end{remark}
The importance of these operators is given in the following proposition:
\begin{proposition}\cite[proposition 3.1]{DLPS}
The generating function for disconnected orbifold Hurwitz can be expressed in terms of the \( \mathcal{A}\)-operators by:
\begin{equation}\label{eq:GenSerFockusual}
H^\bullet(u,\vec\mu ) = \sum_{g=0}^\infty h^{\circ}_{g;\vec\mu} u^b = r^{\sum_{i=1}^{l (\vec\mu)} \frac{\langle \mu_i\rangle}{r}} \prod_{i=1}^{l(\vec\mu )} \frac{u^{\frac{\mu_i}{r}}\mu_i^{[\mu_i ]}}{[\mu_i ]!} \cord{ \prod_{i=1}^{l(\vec\mu )} \mathcal{A}_{\langle \mu_i \rangle} (u,\mu_i )}
\end{equation}
\end{proposition}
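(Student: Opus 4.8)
The plan is to re-derive this formula by the same conjugation strategy used for the monotone and strictly monotone cases in Lemmata~\ref{lem: firstconj} and~\ref{conjall} and Proposition~\ref{AhOper}, now replacing the content operators $\mathcal{D}^{(h)},\mathcal{D}^{(\sigma)}$ by the operator that governs simple ramifications. Concretely, I would start from the semi-infinite wedge realization of the disconnected usual orbifold Hurwitz numbers from~\cite{ALS}, namely
\begin{equation*}
H^\bullet(u,\vec\mu) = \cord{ e^{\frac{\alpha_r}{r}}\, e^{u\mathcal{F}_2}\, \prod_{i=1}^n \frac{\alpha_{-\mu_i}}{\mu_i} },
\end{equation*}
where $\mathcal{F}_2$ is the cut-and-join operator for the simple branch points and each power of $u$ counts one such point, so that the total power is $b=2g-2+l(\vec\mu)+|\mu|/r$ by Riemann--Hurwitz. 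Since $\alpha_r$ and $\mathcal{F}_2$ annihilate the vacuum, and $\mathcal{F}_1 = E$ fixes both vacuum and covacuum, I can insert $e^{u\mathcal{F}_2}e^{\alpha_r/r}$, $u^{\mathcal{F}_1/r}$ and their inverses around each $\alpha_{-\mu_i}$ without changing the correlator. This reduces the proposition to computing the single conjugate $\tfrac1\mu\, u^{\mathcal{F}_1/r} e^{\alpha_r/r} e^{u\mathcal{F}_2}\alpha_{-\mu} e^{-u\mathcal{F}_2} e^{-\alpha_r/r} u^{-\mathcal{F}_1/r}$ and identifying it with $r^{\langle\mu\rangle/r}u^{\mu/r}\tfrac{\mu^{[\mu]}}{[\mu]!}\mathcal{A}_{\langle\mu\rangle}(u,\mu)$.

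The conjugation proceeds in three steps, the first of which is markedly simpler than in the monotone cases. From the commutation relation~\eqref{eq:commE} one gets $[\mathcal{F}_2,\mathcal{E}_{-\mu}(w)] = \mu\,\tfrac{d}{dw}\mathcal{E}_{-\mu}(w)$, so conjugation by $e^{u\mathcal{F}_2}$ acts as a pure shift of the spectral argument, sending $\alpha_{-\mu}=\mathcal{E}_{-\mu}(0)$ to the single operator $\mathcal{E}_{-\mu}(u\mu)$; this is the origin of the argument $u\mu$ throughout the usual $\mathcal{A}$-operator, and it avoids the infinite sum of symmetric polynomials in the contents that appeared in Lemma~\ref{lem: firstconj}. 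For the second step I would conjugate by $e^{\alpha_r/r}$ exactly as in Lemma~\ref{conjall}: iterating $\ad_{\alpha_r}\mathcal{E}_{-\mu}(u\mu) = \zeta(ru\mu)\,\mathcal{E}_{r-\mu}(u\mu)$ produces $\sum_{t\geq 0}\tfrac{\zeta(ru\mu)^t}{t!\,r^t}\mathcal{E}_{tr-\mu}(u\mu)$, and rewriting $\zeta(x)=x\,\mathcal{S}(x)$ turns this into $\sum_{t\geq 0}\tfrac{(u\mu)^t\mathcal{S}(ru\mu)^t}{t!}\mathcal{E}_{tr-\mu}(u\mu)$.

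The third step is the conjugation by $u^{\mathcal{F}_1/r}$ together with the index shift $t\mapsto t-[\mu]$, mirroring the end of Proposition~\ref{AhOper}. Since $\mathcal{E}_a(z)$ has energy $a$, the conjugation multiplies the $t$-th term by $u^{-(tr-\mu)/r}=u^{-t}u^{\mu/r}$, cancelling $u^t$ and extracting the overall $u^{\mu/r}$; the shift $t\mapsto t-[\mu]$ then converts $\mathcal{E}_{tr-\mu}$ into $\mathcal{E}_{tr-\langle\mu\rangle}$ and reorganizes $\tfrac{\mu^t}{t!}$ into $\tfrac{\mu^{[\mu]}}{[\mu]!}\cdot\tfrac{\mu^{t}}{([\mu]+1)_{t}}$ via $(t+[\mu])! = [\mu]!\,([\mu]+1)_{t}$. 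Extending the $t$-sum to all of $\Z$ is harmless because $1/([\mu]+1)_t$ vanishes for $t<-[\mu]$, and collecting the remaining scalars reproduces $r^{\langle\mu\rangle/r}u^{\mu/r}\tfrac{\mu^{[\mu]}}{[\mu]!}\mathcal{A}_{\langle\mu\rangle}(u,\mu)$, where the factor $r^{\langle\mu\rangle/r}$ merely compensates the normalization $r^{-\langle\mu\rangle/r}$ built into the definition of $\mathcal{A}_{\langle\mu\rangle}$. Taking the product over $i$ then yields the claimed prefactor $r^{\sum_i\langle\mu_i\rangle/r}\prod_i u^{\mu_i/r}\mu_i^{[\mu_i]}/[\mu_i]!$.

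The conceptual content is light once the wedge formula is in hand; the main obstacle is purely the bookkeeping of scalars, namely tracking the powers of $u$, the factors $r^{\pm\langle\mu\rangle/r}$, and the ratio $\mu^{[\mu]}/[\mu]!$ through the index shift and the energy-grading conjugation, and checking that the factorials reassemble precisely into the Pochhammer symbol $([\mu]+1)_t$ so that the extension of the $t$-sum over all integers introduces no spurious terms. As the statement coincides with Proposition~3.1 of~\cite{DLPS}, this derivation simply recasts it within the $\mathcal{A}$-operator formalism developed above.
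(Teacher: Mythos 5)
Your proposal is correct and follows essentially the same route as the paper: the paper reduces the proposition to precisely the conjugation identity \eqref{eq:usualAconj}, which you derive through the same three steps ($e^{u\mathcal{F}_2}$ via the Okounkov--Pandharipande shift formula, $e^{\alpha_r/r}$ via the commutation relation \eqref{eq:commE}, and $u^{\mathcal{F}_1/r}$ together with the index shift $t \mapsto t - [\mu]$ and the identity $(t+[\mu])! = [\mu]!\,([\mu]+1)_t$). If anything, your bookkeeping is slightly more careful than the paper's displayed equation, since you carry the factor $1/\mu$ from $\alpha_{-\mu}/\mu$ through the conjugation, which is exactly what is needed to match the exponent $\mu^{t-1}$ in the definition of $\mathcal{A}_{\langle\mu\rangle}$.
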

The proof of this proposition amounts to the calculation
\begin{equation}\label{eq:usualAconj}
r^{\frac{\langle \mu \rangle}{r}} \frac{u^{\frac{\mu}{r}}\mu^{[\mu ]}}{[\mu ]!} \mathcal{A}_{\langle \mu \rangle}(u,\mu ) = u^{\frac{\mathcal{F}_1}{r}} e^{\frac{\alpha_r}{r}} e^{u\mathcal{F}_2} \alpha_{-\mu} e^{-u\mathcal{F}_2} e^{-\frac{\alpha_r}{r}} u^{-\frac{\mathcal{F}_1}{r}}
\end{equation}
With these data, we can start our scheme of proof.
\begin{lemma}
The inverse of \( \mathcal{A}_{\langle \mu \rangle} (u,\mu )\) (in the same sense as before) is given by
\begin{equation}
\mathcal{A}_{\langle \mu \rangle} (u,\mu )^{-1} = \frac{r^{\frac{\langle \mu \rangle}{r}}}{[\mu ]!} \sum_{t \geq 0} (-1)^t \frac{\mathcal{S} (ru\mu )^t \mu^{t+[\mu ]}}{t!} \mathcal{E}_{tr+ \mu}(-u\mu )
\end{equation}
\end{lemma}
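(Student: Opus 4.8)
The plan is to invert the conjugation identity \eqref{eq:usualAconj} directly, exactly as the inverse \( \mathcal{A} \)-operators in the (strictly) monotone cases were obtained by replacing \( \alpha_{-\mu} \) with \( \alpha_\mu \). Every factor on the right-hand side of \eqref{eq:usualAconj} is invertible in \( \End(V)\llbracket u\rrbracket \), the scalar prefactor is a unit, and \( \alpha_{-\mu}^{-1} = \alpha_\mu \) since \( \alpha_n\in\GL(V) \) with \( \alpha_n^{-1}=\alpha_{-n} \). Taking the inverse of both sides therefore gives
\[
\mathcal{A}_{\langle \mu \rangle}(u,\mu)^{-1} = \frac{r^{\frac{\langle \mu\rangle}{r}} u^{\frac{\mu}{r}}\mu^{[\mu]}}{[\mu]!}\, u^{\frac{\mathcal{F}_1}{r}} e^{\frac{\alpha_r}{r}} e^{u\mathcal{F}_2}\, \alpha_{\mu}\, e^{-u\mathcal{F}_2} e^{-\frac{\alpha_r}{r}} u^{-\frac{\mathcal{F}_1}{r}},
\]
so it remains only to evaluate the same chain of three conjugations as in the forward computation, now applied to \( \alpha_\mu = \mathcal{E}_\mu(0) \).

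First I would handle the conjugation by \( e^{u\mathcal{F}_2} \), which acts as a shift of the argument. Writing \( \mathcal{F}_2 = [w^2]\,\tilde{\mathcal{E}}_0(w) \) and using the commutation rule \eqref{eq:commE} (valid for \( \tilde{\mathcal{E}}_0 \) as well), one finds \( [\mathcal{F}_2, \mathcal{E}_\mu(z)] = -\mu\, \frac{d}{dz}\mathcal{E}_\mu(z) \); iterating gives \( \ad_{\mathcal{F}_2}^t\mathcal{E}_\mu(z) = (-\mu)^t\frac{d^t}{dz^t}\mathcal{E}_\mu(z) \), so the Hadamard series is a Taylor expansion and \( e^{u\mathcal{F}_2}\mathcal{E}_\mu(0)e^{-u\mathcal{F}_2} = \mathcal{E}_\mu(-u\mu) \). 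Next, conjugation by \( e^{\alpha_r/r} \) is the identical step used in lemma~\ref{conjall}: from \( \ad_{\alpha_r}\mathcal{E}_\mu(w) = \zeta(rw)\mathcal{E}_{r+\mu}(w) \) the Hadamard series collapses to \( \sum_{t\geq 0}\frac{\zeta(rw)^t}{t!\,r^t}\mathcal{E}_{tr+\mu}(w) \), which at \( w=-u\mu \), using \( \zeta(-ru\mu) = -ru\mu\,\mathcal{S}(ru\mu) \) (oddness of \( \zeta \), evenness of \( \mathcal{S} \)), becomes \( \sum_{t\geq 0}\frac{(-1)^t (u\mu)^t \mathcal{S}(ru\mu)^t}{t!}\mathcal{E}_{tr+\mu}(-u\mu) \).

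Finally, conjugation by \( u^{\mathcal{F}_1/r}=u^{E/r} \) multiplies each summand by \( u^{-(tr+\mu)/r}=u^{-t}u^{-\mu/r} \), since \( \mathcal{E}_{tr+\mu} \) has energy \( tr+\mu \). Restoring the scalar prefactor, the factor \( u^{\mu/r} \) cancels \( u^{-\mu/r} \), each \( u^{-t} \) reduces \( (u\mu)^t \) to \( \mu^t \), and \( \mu^{[\mu]}\mu^t=\mu^{t+[\mu]} \); collecting terms yields exactly
\[
\frac{r^{\frac{\langle\mu\rangle}{r}}}{[\mu]!}\sum_{t\geq 0}(-1)^t\frac{\mathcal{S}(ru\mu)^t\,\mu^{t+[\mu]}}{t!}\,\mathcal{E}_{tr+\mu}(-u\mu).
\]

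There is no deep obstacle here: the whole argument is the same conjugation computation that underlies \eqref{eq:usualAconj}, run on \( \alpha_\mu \) rather than on \( \alpha_{-\mu} \). The points that most need care are the \( e^{u\mathcal{F}_2} \)-conjugation, where one must recognize the Hadamard series as a Taylor shift of the argument rather than expand term by term, and the bookkeeping of signs and of the energy exponent \( -(tr+\mu)/r \) in the final rescaling. I would also note in passing that since \( \mu\geq 1 \) one always has \( tr+\mu>0 \) for \( t\geq 0 \), so the \( \delta_{n,0}/\zeta(z) \) correction term in \( \mathcal{E}_{tr+\mu} \) never appears and the \( t \)-sum causes no complication.
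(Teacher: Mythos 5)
Your proof is correct and follows essentially the same route as the paper: the paper likewise obtains the inverse by running the three conjugations $e^{u\mathcal{F}_2}$, $e^{\alpha_r/r}$, $u^{\mathcal{F}_1/r}$ on $\alpha_\mu$ instead of $\alpha_{-\mu}$ and then comparing with equation~\eqref{eq:usualAconj}, exactly as you do after inverting that identity. The only cosmetic difference is that you derive the shift $e^{u\mathcal{F}_2}\alpha_\mu e^{-u\mathcal{F}_2}=\mathcal{E}_\mu(-u\mu)$ from the commutation relation, whereas the paper simply cites \cite[equation (2.14)]{OP}.
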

\begin{proof}
The proof is very analogous to the proof of \cite[proposition 3.1]{DLPS}.\par
We do the same commutation as for the \( \mathcal{A}\)-operators, but starting from \( \alpha_\mu \). First recall \cite[equation (2.14)]{OP}:
\begin{equation}
e^{u\mathcal{F}_2} \alpha_{\mu} e^{-u\mathcal{F}_2} = \mathcal{E}_\mu (-u\mu )
\end{equation}
The second conjugation gives
\begin{align}
e^{\frac{\alpha_r}{r}} e^{u\mathcal{F}_2} \alpha_{\mu} e^{-u\mathcal{F}_2} e^{-\frac{\alpha_r}{r}} &= e^{\frac{\alpha_r}{r}} \mathcal{E}_\mu (-u\mu ) e^{-\frac{\alpha_r}{r}} \\
&= \sum_{t=0}^\infty \Big( \frac{\zeta (-ru\mu )}{r}\Big)^t \frac{1}{t!} \mathcal{E}_{tr+\mu}(-u\mu )\\
&= \sum_{t=0}^\infty \frac{(-u\mu )^t \mathcal{S} (-ru\mu )^t}{t!} \mathcal{E}_{tr+\mu}(-u\mu )
\end{align}
And the third conjugation finally shifts the exponent of \( u\):
\begin{equation}
u^{\frac{\mathcal{F}_1}{r}} e^{\frac{\alpha_r}{r}} e^{u\mathcal{F}_2} \alpha_{\mu} e^{-u\mathcal{F}_2} e^{-\frac{\alpha_r}{r}} u^{-\frac{\mathcal{F}_1}{r}} = u^{-\frac{\mu}{r}}\sum_{t=0}^\infty \frac{(-\mu )^t \mathcal{S} (-ru\mu )^t}{t!} \mathcal{E}_{tr+\mu}(-u\mu )
\end{equation}
Comparing this to equation \eqref{eq:usualAconj} shows that this is the inverse of
\begin{equation}
r^{\frac{\langle \mu \rangle}{r}} \frac{u^{\frac{\mu}{r}}\mu^{[\mu ]}}{[\mu ]!} \mathcal{A}_{\langle \mu \rangle}(u,\mu )
\end{equation}
Multiplying by this coefficient finishes the proof.
\end{proof}

\begin{theorem}[Quasi-polynomiality for usual orbifold Hurwitz numbers] \label{Polyusual} For \( 2g - 2 + l(\mu ) \geq0 \), the usual orbifold Hurwitz numbers can be expressed as follows:
\begin{align}
h^{\circ,r}_{g;\vec\mu} &= r^{\sum_{i-1}^{l (\vec\mu)} \frac{\langle \mu_i\rangle}{r}} \prod_{i=1}^{l(\vec\mu )} \frac{u^{\frac{\mu_i}{r}}\mu_i^{[\mu_i ]}}{[\mu_i ]!}  P^{\langle \vec\mu \rangle}(\mu_1, \dots, \mu_{l(\vec\mu)}) \label{eq:Polyusual}
\end{align}
where $P^{\langle \mu\rangle}$ are polynomials of degree $3g-3+l(\vec\mu)$ whose coefficients depend on the parameters $\langle \mu_1 \rangle, \dots \langle \mu_{l(\mu)} \rangle $ and \( \mu = r[\mu ] + \langle \mu \rangle \) denotes the euclidean division by $r$.
\end{theorem}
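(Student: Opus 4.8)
The plan is to follow verbatim the scheme used in the proof of Theorem~\ref{Poly}. By the symmetry of the connected correlator in the $\mu_i$ and the reduction recalled there, it suffices to prove that the coefficient of each power of $u$ in $\corc{\prod_{i=1}^n \mathcal{A}_{\langle\mu_i\rangle}(u,\mu_i)}$ is polynomial in $\nu_1=[\mu_1]$, polynomiality in the remaining variables then following by symmetry and the equivalence recorded in Remark~\ref{rem:mu-floormu-polynomiality}. First I would expand the product of the $t$-sums in the disconnected correlator. The total-energy constraint forces $\sum_{i=1}^n (t_ir-\langle\mu_i\rangle)=0$, the covacuum forces $t_1r-\langle\mu_1\rangle\geq 0$, and the denominator $1/([\mu_i]+1)_{t_i}$ forces $t_i\geq -\nu_i$, so for a fixed power of $u$ the $t_1$-sum is finite. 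Hence each such coefficient is a rational function of $\nu_1$ whose only poles come from $1/(\nu_1+1)_{t_1}$; these are simple and located at $\nu_1=-l$, $l=1,2,\dots$.

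The key step is an analogue of Lemma~\ref{lem:A-res}: I would compute $\Res_{\nu=-l}\mathcal{A}_{\langle\mu\rangle}(u,\nu r+\langle\mu\rangle)$ and show it equals, up to an explicit constant depending only on $l$ and $\langle\mu\rangle$, the inverse operator $\mathcal{A}_{-\langle\mu\rangle}(u,\,lr-\langle\mu\rangle)^{-1}$ (with a slightly different constant in the resonant case $\langle\mu\rangle=0$). Only the terms with $t\geq l$ survive; after extracting the simple zero $(\nu+l)$ from $(\nu+1)_t$, setting $\mu=-lr+\langle\mu\rangle$ and renaming $\lambda=-\mu=lr-\langle\mu\rangle$, the surviving sum reorganizes into exactly the inverse formula derived in the lemma preceding this theorem. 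This is a direct Pochhammer manipulation, and in fact simpler than in the (strictly) monotone case, because the usual $\mathcal{A}$-operator is already in closed form in $\mu$ rather than requiring a $[z^{\,v}]$-extraction.

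With the residue lemma in hand I would invoke the generating-function formula~\eqref{eq:GenSerFockusual} together with the conjugation identity~\eqref{eq:usualAconj}, noting that the inverse operator arises from the same chain of conjugations applied to $\alpha_{\mu}$ instead of $\alpha_{-\mu}$. This rewrites the residue of the disconnected correlator at $\nu_1=-l$ as a vacuum expectation containing $\alpha_{lr-\langle\mu_1\rangle}$; since $\alpha_{lr-\langle\mu_1\rangle}$ annihilates the vacuum and $[\alpha_k,\alpha_l]=k\delta_{k+l,0}$, the residue vanishes unless $\mu_i=lr-\langle\mu_1\rangle$ for some $i\geq 2$. Passing to the connected correlator by inclusion--exclusion, this surviving residue is cancelled exactly by the contribution in which $\mathcal{A}_{\langle\mu_1\rangle}(u,\mu_1)$ and the resonant factor indexed by $\mu_i=lr-\langle\mu_1\rangle$ lie in the same connected block, precisely as in Theorem~\ref{Poly}. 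Hence the connected correlator has no poles in $\nu_1$, so it is a polynomial, and by symmetry and Remark~\ref{rem:mu-floormu-polynomiality} it is polynomial in all the $\mu_i$.

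Finally I would compute the degree, and this is the step I expect to require the most care. Unlike the (strictly) monotone operators, the usual $\mathcal{A}$-operator carries $\mu$ directly inside the arguments $\mathcal{S}(ru\mu)$ and $\mathcal{E}_{tr-\langle\mu\rangle}(u\mu)$, so the $u$-grading is coupled to the $\mu$-grading and one cannot read off the $\nu$-degree from the prefactors alone. Extracting $[u^{2g-2+l(\vec\mu)}]$, I would balance the degree contributed by $\prod_i \mu_i^{\,t_i-1}/(\nu_i+1)_{t_i}$, by the leading term of the connected correlator $\corc{\prod_{i=1}^n \mathcal{E}_{t_ir-\langle\mu_i\rangle}(u\mu_i)}$, and by the factors $\mathcal{S}(ru\mu_i)^{\nu_i+t_i}$, tracking how each power of $u$ forces a matching power of $\mu$. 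As in Theorem~\ref{Poly} these contributions should sum to $3g-3+l(\vec\mu)$, yielding the stated degree. Absorbing the normalization $r^{\sum_i \langle\mu_i\rangle/r}\prod_i \mu_i^{[\mu_i]}/[\mu_i]!$ of~\eqref{eq:GenSerFockusual} into the non-polynomial factor of~\eqref{eq:Polyusual} then completes the proof.
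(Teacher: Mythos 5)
Your proposal follows the paper's scheme step for step, and most of it is sound, but it has one genuine gap in the pole analysis. You claim that the coefficients of powers of $u$ in the disconnected correlator, viewed as rational functions of $\nu_1$, have poles \emph{only} at the negative integers $\nu_1=-l$ coming from $1/(\nu_1+1)_{t_1}$. That is true for the monotone and strictly monotone operators, but it fails for the usual orbifold $\mathcal{A}$-operator, which carries the factor $\mu_1^{t_1-1}$: the term $t_1=0$ contains $\mu_1^{-1}$ and hence contributes a pole at $\mu_1=0$, i.e.\ at $\nu_1=0$ (the covacuum constraint $t_1r-\langle\mu_1\rangle\geq 0$ forces $t_1\geq 0$, so this happens precisely when $\langle\mu_1\rangle=0$ and $t_1=0$). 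Since your entire argument is ``rational function with no poles, hence polynomial'', cancelling only the residues at $\nu_1=-l$, $l\geq 1$, does not finish the proof: you must also show that the connected correlator has no pole at $\mu_1=0$. This is exactly the extra step the paper inserts for the usual case, and it is the main structural difference from the proof of theorem~\ref{Poly} that you set out to imitate.

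The repair is short and parallel to your other cancellation. Write $\eta_i=\langle\mu_i\rangle$ and $n=l(\vec\mu)$. A pole at $\mu_1=0$ requires $t_1\leq 0$ (to get a negative power of $\mu_1$) and $t_1r-\eta_1\geq 0$ (covacuum), hence $\eta_1=t_1=0$. That term of the disconnected correlator cancels, in the inclusion-exclusion formula for the connected correlator, against the corresponding term of
\begin{equation}
\cord{ \mathcal{A}_{\eta_1}(u,\mu_1)}\cord{ \prod_{i=2}^{n} \mathcal{A}_{\eta_i}(u,\mu_i)}
\end{equation}
because the one-point correlator $\cord{ \mathcal{A}_{\eta_1}(u,\mu_1)}$ is nonvanishing only under the same conditions $\eta_1=t_1=0$. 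With this added, your residue lemma and cancellation at the negative integers coincide with lemma~\ref{lem:usualA-res} of the paper (where the constants are $1$ for $\eta\neq 0$ and $1/r$ for $\eta=0$). Your degree discussion is only a sketch, but it identifies the correct difficulty (the coupling of the $u$- and $\mu$-gradings through $\mathcal{S}(ru\mu_i)$ and $\mathcal{E}_{t_ir-\eta_i}(u\mu_i)$); for the record, the paper's count is: $\prod_{i}\mu_i^{t_i-1}/(\nu_i+1)_{t_i}$ contributes degree $-n$ in the $\nu_i$, the leading term of $\corc{ \prod_{i}\mathcal{E}_{t_ir-\eta_i}(u\mu_i)}$ contributes degree $2n-3$ in the $\nu_i$ and $n-2$ in $u$, and extracting $u^{2g}$ from the remaining $\mathcal{S}$-factors contributes degree $3g$, for a total of $-n+(2n-3)+3g=3g-3+n$.
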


\begin{remark}
As stated before, this result is not new. It has been proved in several ways in \cite{BHLM,DLN,DLPS}. We add this new proof for completeness.
\end{remark}

\begin{proof} We will show that, for fixed \( \eta_i \), the connected correlator $ \left\langle \prod_{i=1}^n \mathcal{A}_{\eta_i}(u, \mu_i) \right\rangle^{\circ}$, $n=l(\vec\mu)$, is a power series in \( u\) with polynomial coefficients in all \( \mu_i \) for the operators $\mathcal{A}$.
As these are symmetric functions in the \( \mu_i \), it is again sufficient to prove polynomiality in \( \mu_1\), or, equivalently (see remark~\ref{rem:mu-floormu-polynomiality}) in $\nu_1:=[\mu_1]$.\par

 We will first consider the disconnected correlator $ \left\langle \prod_{i=1}^n \mathcal{A}_{\eta_i}(u, \mu_i) \right\rangle^{\bullet} $ where, setting \( \mu_i = \nu_i r + \eta_i\), the operator $\mathcal{A}$ is 
\begin{equation}
\mathcal{A}_{\eta_i}(u,\mu_i ) \coloneqq r^{-\frac{\eta_i}{r}} \mathcal{S}(ru\mu_i )^{\nu_i} \sum_{t_i \in \Z} \frac{\mathcal{S} (ru\mu_i )^{t_i} \mu_i^{t_i-1}}{(\nu_i+1)_{t_i}} \mathcal{E}_{t_ir-\eta_i}(u\mu_i )
\end{equation}
If we expand all of the \( t\)-sums in the disconnected correlator, we get the condition \( \sum_{i=1}^{l(\mu )} (t_i r - \eta_i) = 0\), as the total energy of the operators in a given monomial must be zero. Furthermore, \( t_1r - \eta_1 \geq 0 \), since the first \( \mathcal{E} \) would get annihilated by the covacuum otherwise, and \( t_i \geq -\nu_i \) (otherwise the symbol $1/(\nu_i+1)_{t_1}$ vanishes), so if we fix \( \eta_1, \nu_2, \eta_2, \dotsc, \nu_n, \eta_n \), the \( t_1 \)-sum becomes finite. Since the power of \( u\) is fixed, it also gives a bound on the degree in $\nu_1$. So the coefficient of a particular power of $u$ in the disconnected correlator $ \left\langle \prod_{i=1}^n \mathcal{A}_{\eta_i}(u, \mu_i) \right\rangle^{\bullet} $ is a rational function in $\nu_1$. \par

Again, because the coefficients are rational functions, we can extend them to the complex plane, and it makes sense to talk about poles. The only possible poles must come from \( \frac{1}{(\nu +1)_t} \) or \( \mu = 0\). These poles are all simple, except possibly for the last case. Let us calculate the residue at \( \nu = - l\), for \( l = 1, 2, \dotsc \).
\begin{lemma}\label{lem:usualA-res}
The residue of the \( \mathcal{A} \)-operators at negative integers is, up to a multiplicative constant, equal to the inverse of the operator with a negative argument. More precisely,
\begin{align}
\Res_{\nu = - l} \mathcal{A}_\eta (u, \nu r + \eta ) &= \mathcal{A}_{-\eta}(u,lr - \eta )^{-1} &&\text{if } \eta \neq 0 
\\
\Res_{\nu = - l} \mathcal{A}_0 (u, \nu r) &= \frac{1}{r} \mathcal{A}_0 (u,lr)^{-1} && \text{if } \eta = 0 
\end{align}
\end{lemma}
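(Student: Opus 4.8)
The plan is to follow verbatim the strategy used in the proof of Lemma~\ref{lem:A-res}, now feeding in the explicit shape of the usual orbifold $\mathcal{A}$-operator and the formula for its inverse from the preceding lemma. Writing $\mu = \nu r + \eta$ with $\nu = [\mu]$, I would first isolate the $\nu$-dependence of the coefficient of each fixed power of $u$. The factors $r^{-\eta/r}$, $\mathcal{S}(ru\mu)^\nu$, $\mathcal{S}(ru\mu)^t$, $\mu^{t-1}$ and $\mathcal{E}_{tr-\eta}(u\mu)$ all depend on $\nu$ only analytically, through $\mu = \nu r + \eta$ and through the exponent $\nu$, so they contribute no poles; and since the residue is taken at $\nu = -l$ with $l \geq 1$, the value $\mu = -lr + \eta$ stays away from $0$, so the potential pole at $\mu = 0$ is irrelevant. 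Hence the only singular factor is $1/(\nu+1)_t$, and $\Res_{\nu=-l}$ simply selects the terms with $t \geq l$.

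The core computation is then the residue of the Pochhammer factor. Since $(\nu+1)_t = (\nu+1)\cdots(\nu+t)$ has a simple zero at $\nu = -l$ coming from the factor $(\nu+l)$, I obtain
\begin{equation*}
\Res_{\nu=-l}\frac{1}{(\nu+1)_t} = \frac{1}{(1-l)_{l-1}\,(t-l)!} = \frac{(-1)^{l-1}}{(l-1)!\,(t-l)!}.
\end{equation*}
Substituting $\nu = -l$ everywhere, setting $\lambda := lr - \eta = -\mu$, and using that $\mathcal{S}$ is even to write $\mathcal{S}(ru\mu) = \mathcal{S}(ru\lambda)$, the two exponentials $\mathcal{S}(ru\mu)^{\nu}\,\mathcal{S}(ru\mu)^{t}$ collapse to $\mathcal{S}(ru\lambda)^{t-l}$. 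Re-indexing by $s := t - l \geq 0$ turns the energy $tr - \eta$ into $sr + \lambda$ and turns $\mu^{t-1}$ into $(-1)^{s+l-1}\lambda^{s+l-1}$; collecting this sign against the $(-1)^{l-1}$ from the residue leaves a clean $(-1)^s/s!$, so that
\begin{equation*}
\Res_{\nu=-l}\mathcal{A}_\eta(u,\mu) = \frac{r^{-\eta/r}}{(l-1)!}\sum_{s\geq 0}\frac{(-1)^s}{s!}\,\mathcal{S}(ru\lambda)^s\,\lambda^{s+l-1}\,\mathcal{E}_{sr+\lambda}(-u\lambda).
\end{equation*}

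It then remains to recognise this as the inverse operator with negative argument. Comparing with the formula for $\mathcal{A}_{\langle\mu\rangle}(u,\mu)^{-1}$ from the preceding lemma evaluated at second argument $\lambda = lr - \eta$, and recalling that its subscript records the representative $-\eta$ of the relevant residue class (so that its $r$-power prefactor is $r^{-\eta/r}$, matching the residue), the comparison reduces to matching $(l-1)!$ and $\lambda^{s+l-1}$ against $[\lambda]!$ and $\lambda^{s+[\lambda]}$. The entire case distinction is then governed by the value of the integer part: for $\eta \neq 0$ one has $[lr-\eta] = l-1$, so both $(l-1)!$ and $\lambda^{s+l-1}$ agree exactly and the residue equals the inverse; for $\eta = 0$ one has $[lr] = l$, and the mismatch of $(l-1)!$ versus $l!$ together with $\lambda^{s+l-1}$ versus $\lambda^{s+l}$ produces the extra factor $l/\lambda = l/(lr) = 1/r$. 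I expect the only delicate point to be precisely this bookkeeping of constants — tracking the $r^{-\eta/r}$ prefactor and the jump of $[lr-\eta]$ between the two cases — rather than anything conceptual, since the structural argument is identical to that of Lemma~\ref{lem:A-res}.
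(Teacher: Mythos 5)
Your proof is correct and follows essentially the same route as the paper's: isolate the simple pole of $1/(\nu+1)_t$ at $\nu=-l$ (so only the terms $t\geq l$ contribute, with $\Res_{\nu=-l}1/(\nu+1)_t=(-1)^{l-1}/(l-1)!(t-l)!$), substitute $\mu=-\lambda$, use that $\mathcal{S}$ is even, reindex by $s=t-l$, and compare with the inverse-operator formula via $[\lambda]=l-1+\delta_{\eta 0}$. Your explicit handling of the prefactor (reading the subscript $-\eta$ literally, so that the inverse carries $r^{-\eta/r}$) is exactly the convention the paper uses implicitly in its own, terser, comparison step, and your resulting constants $1$ and $1/r$ match the lemma.
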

\begin{proof}
Let us prove both equations together. The only contributing terms have \( t \geq l\), so we calculate
\begin{align}
\Res_{\nu = -l} \mathcal{A}_\eta (u, \mu )& = r^{-\frac{\eta}{r}} \mathcal{S}(ru\mu )^{\nu} \sum_{t \geq l} \frac{\mathcal{S} (ru\mu )^t \mu^{t-1} (\nu + l)}{(\nu+1)_t} \mathcal{E}_{tr-\eta}(u\mu ) \bigg|_{\nu = -l}\\
& = r^{-\frac{\eta}{r}} \mathcal{S}(ru\mu )^{-l} \sum_{t \geq l} \frac{\mathcal{S} (ru\mu )^t \mu^{t-1} }{(1-l)_{l-1} (t-l)!} \mathcal{E}_{tr-\eta}(u\mu )
\end{align}
where we kept writing \( \mu \) for \( -lr + \eta \). As this is negative, however, it makes sense to rename it \( \mu = -\lambda \). Substituting and collecting the minus signs from the Pochhammer symbol, we get
\begin{align}
\Res_{\nu = -l} \mathcal{A}_\eta (u, \mu )&= \frac{(-1)^{l-1}r^{-\frac{\eta}{r}}}{(l-1)!} \mathcal{S}(ru\lambda )^{-l} \sum_{t \geq l} (-1)^{t-1} \frac{\mathcal{S} (ru\lambda )^t \lambda^{t-1} }{(t-l)!} \mathcal{E}_{tr-\eta}(-u\lambda )\\
&=\frac{r^{-\frac{\eta}{r}}}{(l-1)!} \sum_{t \geq 0} (-1)^t \frac{\mathcal{S} (ru\lambda )^t \lambda^{t+l-1} }{(t-l)!} \mathcal{E}_{tr+ \lambda}(-u\lambda )
\end{align}
Because \( \lambda = lr - \eta \), we have \( l = [\lambda ] + 1-\delta_{\eta 0} \) and \( \eta = - \langle \lambda \rangle \). Recalling equation \eqref{eq:A-inv}, we obtain the result.
\end{proof}
Because of lemma \ref{lem:usualA-res}, we can express the residues in \( \mu_1 \) of the disconnected correlator as follows:
\begin{equation}
\Res_{\nu_1 = - l}\cord{ \prod_{i=1}^n \mathcal{A}_{\eta_i}(u, \mu_i)} = c(\eta_1 )\cord{\mathcal{A}_{-\eta_1}(u,lr-\eta_1 )^{-1} \prod_{i=2}^n \mathcal{A}_{\eta_i}(u,\mu_i)}
\end{equation}
where \( c(\eta_1 )\) is the coefficient in lemma~\ref{lem:usualA-res}. Recalling equation~\eqref{eq:GenSerFockusual} and realising that the inverse \( \mathcal{A}\)-operator is given by the same conjugations as the normal \( \mathcal{A}\)-operator, but starting from \( \alpha_\mu \) in stead of \( \alpha_{-\mu} \), we can see that this reduces to
\begin{equation}\label{eq:rescordusual}
\Res_{\nu_1 = - l}\cord{ \prod_{i=1}^n \mathcal{A}_{\eta_i}(u, \mu_i)} = C\cord{ e^{\frac{\alpha_{r}}{r}} \mathcal{D}(u) \alpha_{lr-\eta_1} \prod_{i=2}^n \alpha_{-\mu_i} }
\end{equation}
for some specific coefficient \( C\) that depends only on $\eta_1$ and $l$.\par
For the pole at zero, we see the only contributing terms must have \( t \leq 0\), but we also need \( tr - \eta \geq 0\), in order for the \( \mathcal{E} \) not to get annihilated by the covacuum. Therefore, we need only consider the case \( \eta = 0\) and the term \( t =0\). However, this term in \( \Big\langle \prod_{i=1}^n \mathcal{A}_{\eta_i}(u, \mu_i)\Big\rangle^\bullet \) cancels against the term coming from
\begin{equation}
\cord{ \mathcal{A}_{\eta_1}(u, \mu_1)} \cord{ \prod_{i=2}^n \mathcal{A}_{\eta_i}(u, \mu_i)}
\end{equation}
as that has exactly the same conditions \( \eta = t = 0\) in order for the first correlator not to vanish.\par
The rest of the proof is completely parallel to that of theorem~\ref{Poly}, only the computation of the degree of the polynomial makes some difference.

The degree of the coefficient of $u^{2g-2+n}$, $2g-2+n\geq 0$, of a connected correlator $ \left\langle \prod_{i=1}^n \mathcal{A}_{\eta_i}(u, \mu_i) \right\rangle^{\circ} $ can be computed in the following way. The coefficient $\prod_{i=1}^n \mu_i^{t_i-1}/(\nu_i+1)_{t_i}$ has degree $-n$ in $\nu_1,\dots,\nu_n$ and degree $0$ in $u$. The leading term of the connected correlator $ \left\langle \prod_{i=1}^n \mathcal{E}_{t_ir-\eta_i}(u\mu_i)\right\rangle^{\circ} $ has degree $n-1+n-2=2n-3$ in $\nu_1,\dots,\nu_n$ and degree $n-2$ in $u$. 
The coefficient of $u^{2g}$ in the series $\mathcal{S}\cdot \prod_{i=1}^n \mathcal{S}(ru\mu_i )^{\nu_i+t_i}$, where $\mathcal{S}$ without argument denotes the $S$-functions coming from the connected correlator $ \left\langle \prod_{i=1}^n \mathcal{E}_{t_ir-\eta_i}(u\mu_i)\right\rangle^{\circ} $ divided by its leading term, is a polynomial of degree $(3/2)\cdot 2g=3g $ in $\nu_1,\dots,\nu_n$. So, the total degree in $\nu_1,\dots,\nu_n$ is equal to $-n+2n-3+3g=3g-3+n$. 

This completes the proof of the theorem.
\end{proof}

\section{Correlation functions on spectral curves}\label{sec:Correlation} 

In this section we explain the relation of the polynomiality statements with the fact that the $n$-point generation functions can be represented via correlation functions defined on the $n$-th cartesian power of a spectral curve. The results concerning the monotone and strictly monotone Hurwitz numbers in this section are new, while in the case of usual Hurwitz numbers it is well-known and we recall it here for completeness. 

The set-up for the problems considered in this paper is the following: We consider a spectral curve $\CP1$ with a global coordinate $z$, with a function $x=x(z)$ on it. Let $\{p_0,\dots,p_{r-1}\}$ be the set of the $z$-coordinates of the critical points of $x$.  We consider the $n$-point generating function of a particular Hurwitz problem, for a fixed genus $g$, and we want it to be an expansion of a symmetric function on $\left(\CP1\right)^{\times n}$ of a particular type:
\begin{equation}\label{eq:RepresentationOnSpectralCurve}
\sum_{\substack{0\leq \alpha_1,\dots,\alpha_n\leq r-1}} P_{\vec\alpha} \left(\frac{d}{dx_1},\dots,\frac{d}{dx_n}\right) \prod_{i=1}^n \xi_{\alpha_i}(x_i)
\end{equation}
Here the $P_{\vec\alpha}$ are polynomials in $n$ variables of degree $3g-3+n$, and the functions $\xi_{\alpha}(x)$ are defined as (the expansions of) some functions that form a convenient basis in the space spanned by $1/(p_\alpha - z )$, $\alpha=0,\dots,r-1$.\par
The reason we are interested in the particular degree \( 3g-3+n\), is in short due to this being the dimension of the moduli space of curves \( \overline{\mathcal{M}}_{g,n}\). Somewhat more explicitly, we expect an ELSV-type formula to hold, as it does in the usual orbifold case\textemdash{}the ELSV-formula itself for \( r = 1\) \cite{ELSV} and the Johnson-Pandharipande-Tseng formula for general \( r\) \cite{JPT}, for more explanations and examples we refer to~\cite{Ey,DOSS,LPSZ,DLPS,ALS}. The topological recursion implies~\cite{Ey} that the correlation differentials are given by the differentials of
\begin{equation}
\sum_{\substack{0\leq \alpha_1,\dots,\alpha_n\leq r-1}} \left[\int_{\overline{\mathcal{M}}_{g,n}} \frac{C_{\vec\alpha}}{ \prod_{i=1}^n \left(1-\psi_i  \frac{d}{dx_i}\right) } \right] \prod_{i=1}^n \xi_{\alpha_i}(x_i),
\end{equation}
where $C_{\vec\alpha}$ is some class in the cohomology of $\overline{\mathcal{M}}_{g,n}$. Because the complex cohomological degree of the \( \psi \)-classes is one, this implies that we have a  polynomial in the derivatives of degree \( \dim \overline{\mathcal{M}}_{g,n} = 3g-3+n\).

\subsection{Monotone orbifold Hurwitz numbers} In the case of the monotone orbifold Hurwitz numbers the conjectural spectral curve  is given by $x=z(1-z^r)$ \cite{DoKarev}. The conjecture on the topological recursion assumes the expansion of equation~\eqref{eq:RepresentationOnSpectralCurve} in $x_1,\dots,x_n$ near $x_1=\cdots=x_n=0$, so we have the following expected property of orbifold Hurwitz numbers:
\begin{equation}\label{eq:RepresentationOnSpectralCurveMonotone}
\sum_{\vec\mu\in\left(\mathbb{N}^\times\right)^n} h^{\circ,r,\leq}_{g;\vec{\mu}} \prod_{i=1}^n x_i^{\mu_i} = \sum_{\substack{0\leq \alpha_1,\dots,\alpha_n\leq r-1}}  P_{\vec\alpha} \left(\frac{d}{dx_1},\dots,\frac{d}{dx_n}\right) \prod_{i=1}^n \xi_{\alpha_i}(x_i).
\end{equation}

 In this case the critical points are given by $p_i = \zeta^i (r+1)^{-1/r}$, $i=0,\dots,r-1$, where $\zeta$ is a primitive $r$-th root of $1$.  This means that up to some non-zero constant factors that are not important, we have the space of functions spanned by:
\begin{equation}
\xi''_i = \frac{1}{1-\zeta^{-i}(r+1)^{1/r}z}, \qquad i=0,1,\dots,r-1
\end{equation}
Consider a non-degenerate change of basis $\xi_k' = \sum_{i=0}^{r-1}\zeta^{ki}/r \cdot \xi''_i $. We have: 
\begin{equation}
\xi_k' = \frac{\left((r+1)^{1/r}z\right)^k}{1-(r+1)z^r}, \qquad k=0,1,\dots, r-1
\end{equation}
Observe that $x=z(1-z^r)$ implies 
\begin{equation}
\frac{d}{dx} = \frac{1}{1-(r+1)z^r} \frac{d}{dz}
\end{equation}
Therefore, the functions $\xi'_k$ are given up to non-zero constant factors $C_k'$ by 
\begin{equation}
\xi'_k = C_k' \frac{d}{dx} \frac{z^{k+1}}{k+1}, \qquad k=0,1,\dots, r-1
\end{equation}
 Thus, the suitable set of basis functions for the representation of the $n$-point function in the form of equation~\eqref{eq:RepresentationOnSpectralCurveMonotone} is given by 
 \begin{equation}
 \xi_i:= \frac{d}{dx} \left(\frac{z^{i+1}}{i+1} \right), \qquad i=0,\dots,r-1
\end{equation}

\begin{lemma} \label{xih}
	For $i=0,\dots,r-1$, we have:
 \begin{equation}\label{eq:FormulaForXiMonotone}
 \xi_i(x) = \sum_{\substack{\mu =0\\r| \mu - i}}^\infty \binom{\mu + [\mu ]}{\mu } x^{\mu}
 \end{equation}	
\end{lemma}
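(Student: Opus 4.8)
The goal is to compute the Taylor expansion of $\xi_i(x) = \frac{d}{dx}\left(\frac{z^{i+1}}{i+1}\right)$ in the variable $x$ near $x=0$, and show it equals the stated series. The key observation is that $\xi_i$ is already given to us as an $x$-derivative of the function $z^{i+1}/(i+1)$, so the plan is to first expand $z^{i+1}$ itself as a power series in $x$, and then differentiate the result with respect to $x$ term by term.

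**Step 1: expand $z$ as a series in $x$.** Since $x = z(1-z^r) = z - z^{r+1}$, the coordinate $z$ is a local inverse of $x$ near the origin, with $z = x + O(x^{r+1})$. The clean way to extract the coefficients is Lagrange inversion: for the functional equation $x = z(1-z^r)$, I would apply the Lagrange--B\"urmann formula to compute $[x^\mu] z^{i+1}$. Writing $z = x \cdot \phi(z)$ with $\phi(z) = 1/(1-z^r)$, Lagrange inversion gives
\begin{equation}
[x^{\mu}]\, z^{i+1} = \frac{i+1}{\mu}[w^{\mu-i-1}]\,\phi(w)^{\mu} = \frac{i+1}{\mu}[w^{\mu-i-1}]\,(1-w^r)^{-\mu}.
\end{equation}
Expanding $(1-w^r)^{-\mu}$ by the binomial series, the coefficient of $w^{\mu-i-1}$ is nonzero only when $\mu - i - 1$ is a multiple of $r$, i.e.\ when $r \mid \mu - i - 1$. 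In that case, writing $\mu - i - 1 = r m$ and recalling $\mu = r[\mu] + \langle\mu\rangle$, one checks that the divisibility condition translates into a congruence on $\langle\mu\rangle$ and that $m = [\mu] - \text{(something)}$; I would track these indices carefully to match the claimed parity of the exponent.

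**Step 2: differentiate and reindex.** Having expanded $\frac{z^{i+1}}{i+1} = \sum_\mu c_\mu x^\mu$, applying $\frac{d}{dx}$ shifts the exponents down by one, so $\xi_i(x) = \sum_\mu \mu\, c_\mu\, x^{\mu-1}$. The factor $\mu$ cancels against the $1/\mu$ from Lagrange inversion, leaving a pure binomial coefficient. After reindexing the summation variable, the surviving exponents should be exactly those $\mu$ with $r \mid \mu - i$, and the coefficient should simplify to $\binom{\mu + [\mu]}{\mu}$. This last simplification, matching $\frac{1}{r}\binom{\cdots}{\cdots}$ from the binomial series to $\binom{\mu+[\mu]}{\mu}$, is where I expect the bulk of the bookkeeping to lie.

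**Main obstacle.** The computation itself is elementary; the real care is in the index arithmetic. One must verify that the divisibility condition emerging from Lagrange inversion ($r \mid \mu - i - 1$ before differentiation) becomes exactly $r \mid \mu - i$ after the derivative shifts the exponent, and that the various occurrences of $[\mu]$ and $\langle\mu\rangle$ line up so that the binomial coefficient collapses precisely to $\binom{\mu+[\mu]}{\mu}$ with no stray factors of $r$ or off-by-one errors. An alternative, perhaps cleaner, route would be to avoid Lagrange inversion altogether by observing that $(r+1) z^r = 1 - \frac{dx}{dz}$ relates $\frac{d}{dx}$ to $\frac{d}{dz}$, and to expand $\xi_i = \frac{z^i \, dz}{(1 - (r+1)z^r)\,dx} = \frac{z^i}{1-(r+1)z^r}$ directly as a geometric series in $z^r$, then substitute the $z$-in-$x$ expansion; but I expect the Lagrange approach to give the binomial coefficient most transparently, so that is the route I would pursue first.
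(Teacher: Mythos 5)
Your proposal is correct and is essentially the paper's own argument: the paper likewise extracts $[x^n]\,z^{i+1}$ from the functional equation $x=z(1-z^r)$ by a residue computation (of which the Lagrange--B\"urmann formula you invoke is precisely the packaged form) and then differentiates term by term, the prefactor $1/\mu$ cancelling against the derivative. The only difference is cosmetic and in your favour---the derivative form of Lagrange inversion hands you the single binomial $\frac{i+1}{\mu}\binom{\mu+m-1}{m}$ at once, whereas the paper's residue keeps the factor $dx/dz=1-(r+1)z^r$ and must combine two binomials---and your worry about stray factors is unfounded: after differentiation the surviving exponent is $\mu=rm+i$ with $0\le i\le r-1$, so $[\mu]=m$ and the coefficient collapses exactly to $\binom{\mu+[\mu]}{\mu}$.
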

\begin{proof} In order to compute the expansion of $z^{i+1}$ in $x$, we compute the residue:
\begin{align}
& \oint z^{i+1} \frac{dx}{x^{n+1}} = \oint \frac{1-(r+1)z^r}{(1-z^r)^{n+1} } \frac{z^{i+1}dz }{z^{n+1}} = \oint \frac{dz}{z^{n-i}} (1-(r+1)z^r) \sum_{j=0}^\infty \binom{n+j}{j} z^{rj} 
\end{align} 
This residue is nontrivial only for $n=kr+i+1$, $k\geq 0$, and in this case it is equal to the coefficient of $z^{kr}$, that is,
\begin{equation}
\binom{kr+k+i+1}{k}-(r+1)\binom{kr+k+i}{k-1} = \frac{(i+1)\cdot (kr+k+i)!}{k! (kr+i+1)!}
\end{equation}
Thus 
\begin{equation}
\frac{z^{i+1}}{i+1} = \sum_{k=0}^\infty \binom{kr+k+i}{k} \frac{x^{kr+i+1}}{kr+i+1}
\end{equation}
which implies the formula for $\xi_i= (d/dx) \left(z^{i+1}/(i+1) \right)$, $i=0,\dots,r-1$, if we set \( \mu = kr+i\). 
\end{proof}
 
The explicit formulae for the expansions of functions $\xi_i$ in the variable $x$ given by equation~\eqref{eq:FormulaForXiMonotone} imply a particular structure for the coefficients of the expansion given by equation~\eqref{eq:RepresentationOnSpectralCurve}, that is, for monotone orbifold Hurwitz numbers. In fact we have:

\begin{proposition} \label{prop:ExpansionMonotone}
	The coefficient of $x_1^{\mu_1}\cdots x_n^{\mu_n}$ of the expansion in $x_1,\dots,x_n$ near zero of an expression of the form
\begin{equation}\label{eq:ExpressionPropPolyMonotone}
 \sum_{\substack{0\leq k_1,\dots,k_n\leq r-1}} P_{k_1,\dots,k_n} \left(\frac{d}{dx_1},\dots,\frac{d}{dx_n}\right) \prod_{i=1}^n \xi_{k_i}
\end{equation}
where $P_{k_1,\dots,k_n}$ are polynomials of degree $3g-3+n$ and $\xi_k$ is equal to $\frac{d}{dx} \frac{z^{k+1}}{k+1}$, is represented as
\begin{equation}
\prod_{i=1}^n \binom{\mu_i+[\mu_i ]}{\mu_i} \cdot Q_{\langle \mu_1 \rangle,\dotsc ,\langle \mu_n \rangle} ([\mu_1],\dotsc ,[\mu_n])
\end{equation}
where $\mu_i=r[\mu_i]+\langle \mu_i \rangle$, is the euclidean division, and $Q_{\eta_1,\dots,\eta_n}$ are some polynomials of degree $3g-3+n$ whose coefficients depend on $\eta_1,\dots,\eta_n \in \{ 0, \dotsc, r-1\}$.
\end{proposition}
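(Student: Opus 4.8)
The plan is to reduce the $n$-variable statement to a single-variable coefficient computation and to isolate the whole arithmetic content in one factorial identity. Since $\prod_{i=1}^n \xi_{k_i}(x_i)$ is a product of one-variable series and any $P_{k_1,\dots,k_n}(\tfrac{d}{dx_1},\dots,\tfrac{d}{dx_n})$ is a finite linear combination of monomials $\prod_i(\tfrac{d}{dx_i})^{m_i}$, the extraction $[x_1^{\mu_1}\cdots x_n^{\mu_n}]$ factorizes over the indices $i$. So it suffices to analyze, for one variable, the quantity $[x^\mu](\tfrac{d}{dx})^m\xi_k(x)$ and to prove that it equals $\binom{\mu+[\mu]}{\mu}$ times a polynomial in $[\mu]$ of degree $m$ whose coefficients depend only on $\langle\mu\rangle$ (and on $k,m,r$).

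First I would invoke Lemma~\ref{xih} to write $\xi_k(x)=\sum_{\mu'\equiv k}\binom{\mu'+[\mu']}{\mu'}x^{\mu'}$, differentiate $m$ times, and read off
\[
[x^\mu]\Big(\tfrac{d}{dx}\Big)^m\xi_k(x)=\frac{(\mu+m)!}{\mu!}\binom{(\mu+m)+[\mu+m]}{\mu+m},
\]
which is nonzero exactly when $\mu+m\equiv k\pmod r$. On the support of this condition one has $\langle\mu+m\rangle=k$ and $[\mu+m]=[\mu]+s$, where $s=(m+\langle\mu\rangle-k)/r\ge 0$, so both binomials take the shape $\binom{(r+1)N+\eta}{N}$. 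Writing $\nu=[\mu]$ and cancelling the common factorials, the ratio of the above to $\binom{\mu+[\mu]}{\mu}$ becomes
\[
\frac{\big((r+1)\nu+\langle\mu\rangle+m+s\big)!}{\big((r+1)\nu+\langle\mu\rangle\big)!}\cdot\frac{\nu!}{(\nu+s)!}
=\frac{\prod_{j=1}^{m+s}\big((r+1)\nu+\langle\mu\rangle+j\big)}{\prod_{i=1}^{s}(\nu+i)}.
\]

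The heart of the argument is then to show that this ratio is genuinely a polynomial in $\nu$, not merely a rational function. I would verify that for each $i\in\{1,\dots,s\}$ the index $j_i:=(r+1)i-\langle\mu\rangle$ lies in $\{1,\dots,m+s\}$ (this uses $rs=m+\langle\mu\rangle-k\le m+\langle\mu\rangle$ and $\langle\mu\rangle\le r-1$), and that the corresponding numerator factor equals $(r+1)\nu+\langle\mu\rangle+j_i=(r+1)(\nu+i)$, which cancels the denominator factor $(\nu+i)$ up to the constant $r+1$. What survives is $(r+1)^{s}$ times the product of the remaining $m$ linear factors, a polynomial in $\nu=[\mu]$ of degree exactly $m$ with leading coefficient $(r+1)^{m+s}$ and coefficients depending only on $\langle\mu\rangle,k,m,r$. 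This factorial cancellation is the main obstacle; everything else is linearity and degree counting.

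With the single-variable identity in hand, I would assemble the result. For fixed residues $\eta_i=\langle\mu_i\rangle$, only the monomials of $P_{k_1,\dots,k_n}$ with $m_i\equiv k_i-\eta_i\pmod r$ contribute to $[x_1^{\mu_1}\cdots x_n^{\mu_n}]$; summing their contributions produces $\prod_{i=1}^n\binom{\mu_i+[\mu_i]}{\mu_i}$ times a polynomial in $[\mu_1],\dots,[\mu_n]$ whose total degree is at most $\sum_i m_i\le 3g-3+n=\deg P$, matching the stated bound. Summing finally over $k_1,\dots,k_n$ changes nothing structurally, since each $k_i$ is fixed modulo $r$ by $\eta_i$ and the chosen $m_i$, so the dependence on the $k_i$ is absorbed into the coefficients. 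This yields the desired polynomial $Q_{\eta_1,\dots,\eta_n}([\mu_1],\dots,[\mu_n])$ and completes the proof.
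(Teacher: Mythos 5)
Your proposal is correct and follows essentially the same route as the paper's proof: expand $(d/dx)^m\xi_k$ via Lemma~\ref{xih}, note the congruence condition on the support, factor out $\binom{\mu+[\mu]}{\mu}$, and show the residual ratio of factorials is a polynomial of degree $m$ in $[\mu]$ with leading coefficient $(r+1)^{m+s}$ by cancelling the denominator factors $(\nu+i)$ against the numerator factors $(r+1)(\nu+i)$. Your identification of the exact cancelling indices $j_i=(r+1)i-\langle\mu\rangle$ is just a more explicit rendering of the paper's cancellation step, so no substantive difference.
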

\begin{proof}
	The coefficient of $x^\mu$ in $(d/dx)^p \xi_q$ is non-trivial if and only if $\langle \mu \rangle +p \equiv q \mod r$. In this case, the coefficient of $x^\mu$ is equal to 
	\begin{equation}\label{eq:CoeffExpansionMonotoneHur}
	\binom{[\mu+p]+\mu+p}{[\mu+p]} (\mu+1)_p = \binom{\mu+[\mu]}{\mu} \cdot \frac{([\mu+p]+\mu+p)![\mu]!}{(\mu+[\mu])![\mu+p]!}
	\end{equation}
	Represent $p$ as $p=-\langle \mu \rangle + sr+\ell\geq 0$, $0\leq \ell\leq r-1$. Then the second factor on the right hand side of equation~\eqref{eq:CoeffExpansionMonotoneHur} can be rewritten as 
	\begin{equation}
	 \frac{({([\mu]+s)(r+1)+\ell)!}}{([\mu](r+1)+\langle \mu \rangle )!([\mu]+1)_s}
	\end{equation}
	Observe that we can cancel the factors $([\mu]+1), ([\mu]+2),\dotsc, ([\mu]+s)$ in the denominator with the factors $([\mu]+1)(r+1), ([\mu]+2)(r+1),\dots, ([\mu]+s)(r+1)$ in the numerator. Since $([\mu]+1)(r+1)>[\mu](r+1)+\langle \mu \rangle$, after this cancellation the numerator is still divisible by $([\mu](r+1)+\langle \mu \rangle )!$. So, this factor is a polynomial of degree $p$ in $[\mu]$, with the leading coefficient $(r+1)^{p+s} [\mu]^p$. 
	
	Since the only possible nontrivial coefficient of $x^\mu$ in $(d/dx)^p \xi_q$ is a common factor $\binom{\mu+[\mu]}{\mu}$ multiplied by a polynomial of degree $p$ in $[\mu]$, the coefficient of $\prod_{i=1}^n x_i^{\mu_i}$ in the whole expression~\eqref{eq:ExpressionPropPolyMonotone} is also given by a common factor  $\prod_{i=1}^n \binom{\mu_i+[\mu_i]}{\mu_i}$ multiplied by a polynomial in $[\mu_1],\dotsc,[\mu_n]$ of the same degree as $P_{k_1,\dots,k_n}$. 
\end{proof}

Thus the quasi-polynomiality property of monotone orbifold Hurwitz numbers is equivalent to the property that the $n$-point functions can be represented in a very particular way (given by equation~\eqref{eq:RepresentationOnSpectralCurveMonotone}) on the corresponding conjectural spectral curve, cf.~\cite[conjecture 23]{DoKarev}. 

\subsection{Strictly monotone orbifold Hurwitz numbers} In this case the spectral curve topological recursion follows from the two-matrix model consideration~\cite{CEO}, and it was combinatorially proved in~\cite{DOPS}, see also~\cite{DoManescu}. From these papers it does follow that the $n$-point function is represented as an expansion of the following form:
\begin{equation}\label{eq:RepresentationOnSpectralCurveStrictlyMonotone}
\sum_{\vec\mu\in\left(\mathbb{N}^\times\right)^n} h^{\circ,r,<}_{g;\vec\mu} \prod_{i=1}^n x_i^{-\mu_i} =  \sum_{\substack{0\leq \alpha_1,\dots,\alpha_n\leq r-1}}  P_{\vec\alpha} \left(\frac{d}{dx_1},\dots,\frac{d}{dx_n}\right) \prod_{i=1}^n \xi_{\alpha_i}(x_i)
\end{equation}
for the curve $x=z^{r-1}+z^{-1}$. The goal of this section is to show the equivalence of this representation to the quasi-polynomiality property of strictly monotone orbifold Hurwitz numbers.

The critical points of $x$ are given by $p_i=\zeta^i (r-1)^{-1/r}$, $i=0,\dots, r-1$, so, repeating the argument for the previous section and using that in this case
\begin{equation}
-\frac{1}{z^2} \frac{d}{dx} = \frac{1}{1-(r-1)z^r} \frac{d}{dz}
\end{equation}
we see that a good basis of functions $\xi_i$ can be chosen as 
\begin{equation}
\xi_i = \frac{1}{z^2} \frac{d}{dx} \left(\frac{z^{i+1}}{i+1}\right), \qquad i=0,\dots, r-1
\end{equation}

The expansion of these function in $x^{-1}$ near $x=\infty$ is given by the following lemma:
\begin{lemma}
	For $i=0,\dots,r-1$, we have:
	\begin{equation}\label{eq:FormulaForXiSctrictlyMonotone}
	\xi_i(x) = \sum_{\substack{\mu =1\\ r| \mu - i }}^\infty \binom{\mu -1}{[\mu ]} x^{-\mu}
	\end{equation}	
\end{lemma}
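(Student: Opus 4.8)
The plan is to mirror the residue computation behind equation~\eqref{eq:FormulaForXiMonotone} in the monotone case, adapted to an expansion at $x=\infty$ instead of at $x=0$. First I would make the function $\xi_i$ fully explicit: combining the relation $-z^{-2}\,d/dx=(1-(r-1)z^r)^{-1}\,d/dz$ with the definition $\xi_i=z^{-2}(d/dx)\bigl(z^{i+1}/(i+1)\bigr)$ gives the closed form
\[
\xi_i=-\frac{z^i}{1-(r-1)z^r},
\]
which is a rational function of $z$, regular at $z=0$. Since $x=z^{-1}+z^{r-1}\to\infty$ as $z\to 0$, expanding $\xi_i$ in powers of $x^{-1}$ is the same as expanding in $z$ near $z=0$, and the two are related by a residue.

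Writing $\xi_i(x)=\sum_{\mu\ge 1}a_\mu x^{-\mu}$, I would recover the coefficient as $a_\mu=\tfrac{1}{2\pi i}\oint \xi_i(x)\,x^{\mu-1}\,dx$ and convert it into a residue at $z=0$ via the substitution $x=z^{-1}+z^{r-1}$, using
\[
x^{\mu-1}=z^{-(\mu-1)}(1+z^r)^{\mu-1},\qquad dx=-z^{-2}\bigl(1-(r-1)z^r\bigr)\,dz.
\]
The crucial simplification is that the factor $1-(r-1)z^r$ coming from $dx$ cancels exactly against the denominator of $\xi_i$; after this cancellation the integrand collapses to the clean form $z^{i-\mu-1}(1+z^r)^{\mu-1}\,dz$, with no leftover rational factor.

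It then remains to read off $\Res_{z=0}\bigl(z^{i-\mu-1}(1+z^r)^{\mu-1}\bigr)$, i.e.\ the coefficient of $z^{\mu-i}$ in $(1+z^r)^{\mu-1}=\sum_{j}\binom{\mu-1}{j}z^{rj}$. This is nonzero precisely when $r\mid\mu-i$, which is exactly the summation condition in equation~\eqref{eq:FormulaForXiSctrictlyMonotone}; and when it holds, the constraint $0\le i\le r-1$ forces $\langle\mu\rangle=i$ and $j=(\mu-i)/r=[\mu]$, so the surviving coefficient is $\binom{\mu-1}{[\mu]}$, exactly as stated.

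The step I expect to require the most care, and the only genuine obstacle, is the orientation and sign bookkeeping at $x=\infty$. In the monotone case the expansion is taken at $x=0$, where $x\approx z$ preserves orientation and the residue yields the coefficient directly; here the coefficient of $x^{-\mu}$ is attached to a residue at infinity and the local map $x\approx z^{-1}$ reverses orientation, so the naive substitution produces an overall sign that must be reconciled with the normalization of $\xi_i$ (which the spectral-curve construction fixes only up to a nonzero constant, and which also accounts for the absence of a $\mu=0$ term in the sum). Once this normalization is pinned down, the computation above reproduces equation~\eqref{eq:FormulaForXiSctrictlyMonotone} verbatim.
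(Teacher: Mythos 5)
Your proposal is correct and follows essentially the same route as the paper's proof, which likewise extracts the coefficient of $x^{-\mu}$ as the residue $\oint \xi_i(x)\,x^{\mu-1}\,dx$, pulls it back to the $z$-coordinate via $x=(1+z^r)/z$, and reads off $[z^{\mu-i}]\,(1+z^r)^{\mu-1}=\binom{\mu-1}{[\mu]}$; the only cosmetic difference is that the paper routes the cancellation through one integration by parts, writing the integrand as $-\frac{z^{i+1}}{i+1}\,d\bigl(\frac{(1+z^r)^{\mu-1}}{z^{\mu+1}}\bigr)$, instead of cancelling the factor $1-(r-1)z^r$ directly as you do. Your caution about signs is indeed warranted: with the paper's literal normalization $\xi_i=\frac{1}{z^2}\frac{d}{dx}\bigl(\frac{z^{i+1}}{i+1}\bigr)=-\frac{z^i}{1-(r-1)z^r}$, the orientation reversal at $x=\infty$ makes every coefficient come out as $-\binom{\mu-1}{[\mu]}$ (and produces a constant term $-\delta_{i,0}$), a harmless normalization slip that the paper's proof silently ignores and that disappears once $\xi_i$ is rescaled by $-1$, as the spectral-curve setup permits since the $\xi_i$ are only fixed up to nonzero constants.
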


\begin{proof} We compute the coefficient of $x^{-\mu}$ as the residue
	\begin{equation}
	\oint \frac{1}{z^2} \frac{d}{dx} \left(\frac{z^{i+1}}{i+1}\right) x^{\mu-1}dx = \oint -\frac{z^{i+1}}{i+1} d \left(\frac{(1+z^r)^{\mu-1}}{z^{\mu+1}} \right)
	\end{equation}
	We see that his residue can be non-trivial only if $\mu+1 \equiv i+1 \mod r$, and in this case it is equal to $\binom{\mu-1}{[\mu]}$.
\end{proof}

The proof of the following statement repeats the proof of proposition~\ref{prop:ExpansionMonotone}.
\begin{proposition}
	 The coefficient of $x_1^{-\mu_1}\cdots x_n^{-\mu_n}$ of the expansion in $x_1^{-1},\dots,x_n^{-1}$ near infinity of an expression of the form
	 \begin{equation}\label{eq:ExpressionPropPolyStrictlyMonotone}
	 \sum_{\substack{0\leq k_1,\dots,k_n\leq r-1}} P_{k_1,\dots,k_n} \left(\frac{d}{dx_1},\dots,\frac{d}{dx_n}\right) \prod_{i=1}^n \xi_{k_i}
	 \end{equation}
	 where $P_{k_1,\dots,k_n}$ are polynomials of degree $3g-3+n$ and $\xi_k$ is equal to $\frac{1}{z^2} \frac{d}{dx} \left(\frac{z^{k+1}}{k+1}\right)$, is represented as
	 \begin{equation}
	 \prod_{i=1}^n  \binom{\mu_i-1}{[\mu_i]} \cdot Q_{\langle \mu_1 \rangle, \dotsc, \langle \mu_n \rangle} ([\mu_1],\dotsc,[\mu_n])
	 \end{equation}
	 where $\mu_i=r[\mu_i]+\langle \mu_i \rangle $ and $Q_{\eta_1,\dots,\eta_n}$ are some polynomials of degree $3g-3+n$ whose coefficients depend on $\eta_1,\dots,\eta_n \in \{ 0, \dotsc , r-1\}$.
\end{proposition}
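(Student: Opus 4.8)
The plan is to mirror the proof of Proposition~\ref{prop:ExpansionMonotone} almost verbatim, substituting the expansion~\eqref{eq:FormulaForXiSctrictlyMonotone} for~\eqref{eq:FormulaForXiMonotone} and keeping track of the sign flip produced by expanding in $x^{-1}$ instead of $x$. First I would compute the coefficient of $x^{-\mu}$ in $(d/dx)^p\xi_q$. Since $(d/dx)^p x^{-\nu}=(-1)^p\,\nu(\nu+1)\cdots(\nu+p-1)\,x^{-\nu-p}$, only the term $\nu=\mu-p$ of~\eqref{eq:FormulaForXiSctrictlyMonotone} contributes, and it is present precisely when $\mu-p\equiv q\pmod r$. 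In that case the coefficient equals $(-1)^p\,(\mu-p)(\mu-p+1)\cdots(\mu-1)\,\binom{\mu-p-1}{[\mu-p]}$. Writing the rising product as $(\mu-1)!/(\mu-p-1)!$ and expanding the binomial, this becomes
\[
(-1)^p\binom{\mu-1}{[\mu]}\cdot\frac{[\mu]!\,(\mu-1-[\mu])!}{[\mu-p]!\,(\mu-p-1-[\mu-p])!},
\]
which is the exact analogue of~\eqref{eq:CoeffExpansionMonotoneHur}, with the common factor $\binom{\mu-1}{[\mu]}$ pulled out in place of $\binom{\mu+[\mu]}{\mu}$.

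The core of the argument is then to show that the remaining factor is a polynomial of degree $p$ in $[\mu]$ for fixed residues $\langle\mu\rangle$ and $q$. Setting $s:=(p+q-\langle\mu\rangle)/r$, which is an integer by the congruence condition, one gets $[\mu-p]=[\mu]-s$, together with $\mu-1-[\mu]=(r-1)[\mu]+\langle\mu\rangle-1$ and $\mu-p-1-[\mu-p]=(r-1)([\mu]-s)+q-1$. The factor then reads
\[
\frac{[\mu]!}{([\mu]-s)!}\cdot\frac{\bigl((r-1)[\mu]+\langle\mu\rangle-1\bigr)!}{\bigl((r-1)([\mu]-s)+q-1\bigr)!}.
\]
The first ratio is a product of $s$ consecutive integers, hence a polynomial of degree $s$ in $[\mu]$ with leading term $[\mu]^s$; the second is a product of $p-s$ consecutive integers each of the form $(r-1)[\mu]+\text{const}$, hence a polynomial of degree $p-s$ with leading term $\bigl((r-1)[\mu]\bigr)^{p-s}$. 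Their product is therefore a genuine polynomial of degree $p$, with nonzero leading coefficient $(r-1)^{p-s}$ for $r\ge 2$, exactly as the corresponding monotone factor has degree $p$ and leading coefficient $(r+1)^{p+s}$.

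Finally, just as at the end of the proof of Proposition~\ref{prop:ExpansionMonotone}, the only nontrivial coefficient of $x^{-\mu}$ in $(d/dx)^p\xi_q$ is the common factor $\binom{\mu-1}{[\mu]}$ times a polynomial of degree $p$ in $[\mu]$; by linearity in the coefficients of the $P_{k_1,\dots,k_n}$ and the product structure of~\eqref{eq:ExpressionPropPolyStrictlyMonotone}, the coefficient of $\prod_{i=1}^n x_i^{-\mu_i}$ factors as $\prod_{i=1}^n\binom{\mu_i-1}{[\mu_i]}$ times a polynomial $Q_{\langle\mu_1\rangle,\dots,\langle\mu_n\rangle}([\mu_1],\dots,[\mu_n])$ of degree equal to the total degree $3g-3+n$ of the $P_{k_1,\dots,k_n}$. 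I expect the only real work to lie in the middle paragraph: verifying that the factorial ratio is a genuine polynomial of the correct degree $p$ rather than merely a rational function, which hinges on the substitution $[\mu-p]=[\mu]-s$ and the bookkeeping of the two consecutive-integer products. One should also check the boundary terms with $\mu-p<1$, where the contribution simply vanishes in agreement with $\binom{\mu-p-1}{[\mu-p]}=0$, so that no spurious terms appear.
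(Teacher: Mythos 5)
Your proof is correct and takes essentially the same approach as the paper, whose entire proof of this proposition is the remark that it repeats the proof of Proposition~\ref{prop:ExpansionMonotone}; your computation is precisely that adaptation (extract the coefficient, pull out $\binom{\mu-1}{[\mu]}$, and check that the leftover factor is a polynomial of degree $p$ in $[\mu]$ with leading coefficient $(r-1)^{p-s}$), carried out with correct bookkeeping. One tiny correction to your boundary discussion: at $\mu-p=0$ (which forces $q=0$) the binomial $\binom{\mu-p-1}{[\mu-p]}=\binom{-1}{0}=1$ does not vanish, but your polynomial still correctly gives zero there because the second consecutive-integer product contains the factor $(r-1)([\mu]-s)+q=0$.
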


Thus the polynomiality property of strictly monotone orbifold Hurwitz numbers is also equivalent to the property that the $n$-point functions can be represented in a very particular way (given by equation~\eqref{eq:RepresentationOnSpectralCurveStrictlyMonotone}) on the corresponding spectral curve, cf.~\cite[conjecture 12]{DoManescu}.\par
Note that \cite{DoManescu} has a binomial \( \binom{\mu_i-1}{[\mu_i-1]} \), which is equal to ours unless \( \langle \mu_i \rangle = 0\). In that case it differs by a factor \( r -1\), which can be absorbed in the polynomial.

\subsection{Usual orbifold Hurwitz numbers} The spectral curve topological recursion for the usual orbifold Hurwitz numbers is proved in~\cite{DLN,BHLM}, see also~\cite{DLPS,LPSZ}. The corresponding spectral curve is given by the formula $x=\log z - z^r$, and the computations for this curves are also performed in~\cite{SSZ} in relation to a different combinatorial problem. From these papers it does follow that the $n$-point function is represented as an expansion of the following form:
\begin{equation}\label{eq:RepresentationOnSpectralCurveUsual}
\sum_{\vec\mu\in\left(\mathbb{N}^\times\right)^n} h^{\circ,r}_{g;\vec \mu} \prod_{i=1}^n e^{\mu_ix^i} =  \sum_{\substack{0\leq \alpha_1,\dots,\alpha_n\leq r-1}}  P_{\vec\alpha} \left(\frac{d}{dx_1},\dots,\frac{d}{dx_n}\right) \prod_{i=1}^n \xi_{\alpha_i}(x_i)
\end{equation}
It also follows from these papers that the good basis of functions $\xi_i$ is given by 
\begin{equation}
\xi_i=\frac{d}{dx}\left(\frac{z^{i+1}}{i+1}\right)=\frac{z^i}{1-rz^r}, \qquad i=0,\dots,r-1
\end{equation}
 and the expansions of these functions in $e^x$ near $e^x=0$ is given by 
\begin{equation}
\xi_i(x) = \sum_{\substack{\mu = 0\\ r | \mu - i}}^{\infty} \frac{\mu^{[\mu ]}}{[\mu ]!} e^{\mu x}, \qquad i=0,\dots, r-1
\end{equation}
For these functions the differentiation with respect to $x$ is the same as the multiplication by the corresponding degree of $e^x$, so the following statement is obvious:
\begin{proposition}
	The coefficient of $e^{\mu_1x_1}\cdots e^{\mu_nx_n}$ of the expansion in $e^{x_1},\dots,e^{x_n}$ near zero of an expression of the form
	\begin{equation}\label{eq:ExpressionPropPolyUsual}
	\sum_{\substack{0\leq k_1,\dots,k_n\leq r-1}} P_{k_1,\dots,k_n} \left(\frac{d}{dx_1},\dots,\frac{d}{dx_n}\right) \prod_{i=1}^n \xi_{k_i}
	\end{equation}
	where $P_{k_1,\dots,k_n}$ are polynomials of degree $3g-3+n$ and $\xi_k$ is equal to $\frac{d}{dx} \left(\frac{z^{k+1}}{k+1}\right)$, is represented as
	\begin{equation}
	\prod_{i=1}^n  
	\frac{\mu_i^{[\mu_i]}}{[\mu_i]!}\cdot Q_{\langle \mu_i \rangle,\dotsc, \langle \mu_n \rangle} ([\mu_1],\dotsc,[\mu_n])
	\end{equation}
	where $\mu_i=r[\mu_i]+\langle \mu_i\rangle $ and $Q_{\eta_1,\dots,\eta_n}$ are some polynomials of degree $3g-3+n$ whose coefficients depend on $\eta_1,\dotsc,\eta_n \in \{ 0, \dotsc, r-1\}$.
\end{proposition}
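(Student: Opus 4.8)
The plan is to exploit the special feature that distinguishes this case from the monotone and strictly monotone ones: here each basis function $\xi_k$ is \emph{already} expanded as a series in the exponentials $e^{\mu x}$, and the differential operators act diagonally on these, $\frac{d}{dx}e^{\mu x}=\mu\,e^{\mu x}$. It is this diagonalization\textemdash rather than any cancellation of factorials as in the proof of Proposition~\ref{prop:ExpansionMonotone}\textemdash that makes the extraction of the coefficient immediate, and it is precisely why the author can call the statement obvious. Accordingly I would organize the argument as a short piece of bookkeeping rather than a computation.

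First I would determine which terms of the outer sum in~\eqref{eq:ExpressionPropPolyUsual} contribute to the coefficient of $\prod_{i=1}^n e^{\mu_i x_i}$. Since $\xi_{k_i}(x_i)=\sum_{r\mid \mu_i-k_i}\frac{\mu_i^{[\mu_i]}}{[\mu_i]!}e^{\mu_i x_i}$, the exponential $e^{\mu_i x_i}$ appears in $\xi_{k_i}$ only when $k_i\equiv\mu_i\pmod r$; as $0\le k_i\le r-1$, this forces $k_i=\langle\mu_i\rangle$. Hence in the sum over $(k_1,\dots,k_n)$ exactly one multi-index survives, namely $\vec k=\langle\vec\mu\rangle$.

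Next I would apply the operator. Because $\frac{d}{dx_i}$ acts on $e^{\mu_i x_i}$ as multiplication by $\mu_i$, the operator $P_{\langle\vec\mu\rangle}\!\left(\frac{d}{dx_1},\dots,\frac{d}{dx_n}\right)$ multiplies the surviving term by the scalar $P_{\langle\vec\mu\rangle}(\mu_1,\dots,\mu_n)$. Reading off the coefficient of $\prod_i e^{\mu_i x_i}$ then yields
\begin{equation}
P_{\langle\vec\mu\rangle}(\mu_1,\dots,\mu_n)\cdot\prod_{i=1}^n\frac{\mu_i^{[\mu_i]}}{[\mu_i]!},
\end{equation}
which already exhibits the non-polynomial prefactor $\prod_i \mu_i^{[\mu_i]}/[\mu_i]!$ demanded by the statement.

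Finally I would recast the polynomial factor in the required variables: substituting $\mu_i=r[\mu_i]+\langle\mu_i\rangle$ with the residues $\langle\mu_i\rangle$ held fixed turns $P_{\langle\vec\mu\rangle}(\mu_1,\dots,\mu_n)$ into a polynomial $Q_{\langle\vec\mu\rangle}([\mu_1],\dots,[\mu_n])$ whose coefficients depend only on $\langle\vec\mu\rangle$. Since this is an invertible affine reparametrization in each variable (with $r\neq 0$), the top-degree part is sent to a nonzero top-degree part, so the total degree is unchanged and $Q_{\langle\vec\mu\rangle}$ again has degree $3g-3+n$. I do not expect a genuine obstacle here: the only two points deserving a moment's care are the collapse of the $\vec k$-sum to the single index $\langle\vec\mu\rangle$ and the invariance of the degree under $\mu_i\mapsto r[\mu_i]+\langle\mu_i\rangle$; everything else is forced by the fact that the $\xi_i$ diagonalize $\frac{d}{dx}$, which is exactly why this case is so much easier than the two preceding ones.
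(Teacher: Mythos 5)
Your proposal is correct and follows exactly the route the paper takes: the paper justifies the proposition by the single observation that "differentiation with respect to $x$ is the same as multiplication by the corresponding degree of $e^x$," which is precisely your diagonalization argument, and your two supplementary points (the collapse of the $\vec k$-sum to $\vec k=\langle\vec\mu\rangle$ and the degree-preservation under the affine substitution $\mu_i = r[\mu_i]+\langle\mu_i\rangle$) are the correct bookkeeping that the paper leaves implicit when it calls the statement obvious.
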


\appendix
Thus the polynomiality property of usual orbifold Hurwitz numbers is also equivalent to the property that the $n$-point functions can be represented in a very particular way (given by equation~\eqref{eq:RepresentationOnSpectralCurveUsual}) on the corresponding spectral curve.

\section{Computations for unstable correlation function}\label{sec:Unstable}
In this section we prove that the unstable correlation differentials for the conjectural (or proved) CEO topological recursion spectral curve coincide with the expression derived from the $\mathcal{A}$-operators. These computations are performed in the case of monotone orbifold Hurwitz numbers for the cases $(g,n) = (0,1)$ and $(g,n) = (0,2)$, and for strictly monotone orbifold Hurwitz numbers for the case $(g,n) = (0,1)$.

Note that in both cases the computation of the $(0,1)$-numbers was done before, see~\cite{DoKarev,DoManescu,CEO,DOPS}. The \( (0,2)\)-calculation for the monotone Hurwitz numbers is a new result, but we learned after completing our calculation that Karev obtained the same formula independently \cite{Karev}.\par
We show these computations here to test the $\mathcal{A}$-operator formula and to demonstrate its power. The computation of the generating function for the $(0,2)$ monotone orbifold Hurwitz numbers is necessary for the conjecture on topological recursion in~\cite{DoKarev}.

\subsection{The case \texorpdfstring{$ (g,n)=(0,1)$}{(g,n)=(0,1)} } In this section we check that the spectral curve reproduces the correlation differential for $(g,n)=(0,1)$ obtained from the $\mathcal{A}$-operators of section \ref{sec:Aoperators}.

\subsubsection{The monotone case}

%
%

Since in the case of $n=1$ there is no difference between connected and disconnected Hurwitz numbers, the $(0,1)$-free energy for monotone Hurwitz numbers reads:
\begin{equation}
F_{0,1}^{\leq}(x)\coloneqq \sum_{\mu=1}^\infty [u^{-1+d/r}]H^{\bullet, r, \leq}(u,\mu) x^\mu
\end{equation}
Of course, in this formula only $\mu=[\mu]r$, $[\mu]\geq 0$, can contribute non-trivially. Let us compute what we get. We have:
\begin{align*}
& [u^{-1+d/r}]H^{\bullet, r, \leq}(u,\mu) = \frac{(\mu+[\mu])!}{\mu! [\mu]!} [u^{-1}]  \left\langle \mathcal{A}^h_{\langle \mu \rangle }(u,\mu) \right\rangle \\
& =  \frac{(\mu+[\mu])!}{\mu! [\mu]!} \cdot \frac{(\mu+[\mu]+1)_{-2}}{([\mu]+1)_0}\cdot 
[z^{-1}] \mathcal{S}(z)^{\mu-1} \mathcal{S}(rz)^{0+[\mu]} \left\langle \mathcal{E}_0(z) \right\rangle \\
& = \frac{(\mu+[\mu])!}{\mu! [\mu]!} \frac{1}{(\mu+[\mu])(\mu+[\mu]-1)}
\end{align*}
(here we used in the second line equation~\eqref{eq:FormulaForA-Operator}, where $t$ and $v$ deliberately must be equal to $0$ and $-1$ respectively).

Thus we have (replacing $\mu$ by $r[\mu]$ everywhere):
\begin{equation}
F_{0,1}^{\leq} = \sum_{[\mu]=1}^{\infty} \frac{(r[\mu]+[\mu]-2)!}{(r[\mu])! [\mu]!} x^{r[\mu]}
\end{equation}

\begin{theorem}
	We have: $\omega^{\leq}_{0,1}\coloneqq d F^{\leq}_{0,1} = -ydx$.
\end{theorem}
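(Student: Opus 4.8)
The plan is to prove the identity of one-forms by matching Taylor coefficients in the local coordinate $x$ near $x=0$. Since $\omega^{\leq}_{0,1}=dF^{\leq}_{0,1}$ is exact and we already have the explicit series for $F^{\leq}_{0,1}$, writing $k=[\mu]$ and differentiating term by term gives
\[
\omega^{\leq}_{0,1}=\left(\sum_{k=1}^\infty \frac{(rk+k-2)!}{(rk-1)!\,k!}\,x^{rk-1}\right)dx ,
\]
using $rk/(rk)!=1/(rk-1)!$. Since $-y=z^{r-1}/(1-z^r)$, it therefore suffices to show that the bracketed series is the expansion of $-y$ as a power series in $x=z(1-z^r)$ near $x=0$; equivalently, that $dF^{\leq}_{0,1}/dx=-y$.

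First I would expand $-y$ in powers of $x$. Because $x=z-z^{r+1}=z+O(z^{r+1})$, the map $z\mapsto x$ is a local biholomorphism at the origin, so coefficients may be extracted by the residue method used in the proof of Lemma~\ref{xih}: changing variables from $x$ to $z$ and using $dx=(1-(r+1)z^r)\,dz$,
\[
[x^{rk-1}]\,\frac{z^{r-1}}{1-z^r}=\Res_{z=0}\frac{z^{r-1}\bigl(1-(r+1)z^r\bigr)}{z^{rk}(1-z^r)^{rk+1}}\,dz ,
\]
which is the coefficient of $z^{r(k-1)}$ in $\bigl(1-(r+1)z^r\bigr)(1-z^r)^{-(rk+1)}$.

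Expanding $(1-z^r)^{-(rk+1)}=\sum_{j\ge0}\binom{rk+j}{j}z^{rj}$ isolates the terms $j=k-1$ and $j=k-2$, so the coefficient equals
\[
\binom{rk+k-1}{k-1}-(r+1)\binom{rk+k-2}{k-2}.
\]
The one genuinely delicate point is the binomial simplification: factoring out $(rk+k-2)!/\bigl((k-1)!\,(rk)!\bigr)$ turns the difference of binomials into $(rk+k-1)-(r+1)(k-1)=r$, so the residue equals $r\,(rk+k-2)!/\bigl((rk)!\,(k-1)!\bigr)$. Rewriting $(rk)!=rk\,(rk-1)!$ and $k!=k\,(k-1)!$ shows this coincides with $(rk+k-2)!/\bigl((rk-1)!\,k!\bigr)$, matching the coefficient of $x^{rk-1}$ in $\omega^{\leq}_{0,1}$ and hence proving $\omega^{\leq}_{0,1}=-y\,dx$.

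The main obstacle is purely bookkeeping: setting up the residue with the correct shift $m=rk-1$ and the correct powers of $z$ and $(1-z^r)$, and then confirming the binomial identity above. As an independent cross-check one can integrate the right-hand side directly: with $w=z^r$ one finds $\int(-y)\,dx=\frac{r+1}{r}z^r+\log(1-z^r)$, a function of $z$ with vanishing constant term whose $x$-expansion reproduces $F^{\leq}_{0,1}$, confirming the sign and the absence of lower-order terms.
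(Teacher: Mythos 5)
Your proof is correct, and it takes a genuinely (if mildly) different route from the paper's. The paper never expands $-y$ directly: it uses the algebraic relation $xy=-z^r$ on this curve, so that $-y\,dx=(z^r/x)\,dx$, and then simply quotes the expansion of $z^r$ already established in Lemma~\ref{xih} (the case $i+1=r$); the theorem then follows by dividing by $x$ and reindexing $[\mu]=k+1$, with no new computation. You instead extract $[x^{rk-1}]$ of $-y=z^{r-1}/(1-z^r)$ by a fresh residue computation; the extra factor $1/(1-z^r)$ raises the exponent to $(1-z^r)^{-(rk+1)}$ and forces a binomial identity different from the one in Lemma~\ref{xih}, namely
\begin{equation}
\binom{rk+k-1}{k-1}-(r+1)\binom{rk+k-2}{k-2}=\frac{r\,(rk+k-2)!}{(rk)!\,(k-1)!},
\end{equation}
which you verify correctly (it also holds at the boundary case $k=1$, where the second binomial vanishes). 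Both arguments are instances of the same Lagrange-inversion-style coefficient extraction on the spectral curve; the paper's version buys brevity by reusing its lemma, yours buys self-containedness at the cost of redoing the residue. One point you should make explicit: to conclude $\omega^{\leq}_{0,1}=-y\,dx$ from matching the coefficients of $x^{rk-1}$, you also need $[x^m](-y)=0$ for $m\not\equiv r-1\pmod r$. This is immediate from your own residue formula, since the integrand is $z^{r-1-m-1}$ times a power series in $z^r$, but it is part of the statement being proved (Lemma~\ref{xih} records the analogous vanishing), so it deserves a sentence. Your closing cross-check via the antiderivative $\tfrac{r+1}{r}z^r+\log(1-z^r)$ is a nice sanity check on sign and constant term, though as stated it is not an independent verification, since confirming that its $x$-expansion equals $F^{\leq}_{0,1}$ is equivalent to the theorem itself.
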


\begin{proof} The spectral curve gives $y=-z^r/x$. In lemma \ref{xih} we have shown that
	\begin{equation}\label{eq:Equationzi}
	z^i = \sum_{k=0}^\infty \frac{(kr+k+i-1)!}{k! (kr+i)!} ix^{kr+i}=\sum_{k=0}^\infty \frac{(kr+k+i-1)!}{(k+1)! (kr+i-1)!} \frac{(ki+i)}{(kr + i)} x^{kr+i}
	\end{equation}
So,
	\begin{align*}
	& -ydx = \sum_{j=0}^\infty \frac{(kr+k+r-1)!}{(k+1)! (kr+r-1)!} x^{kr+r-1} dx \\
	& = \sum_{k+1=1}^\infty \frac{((k+1)r+(k+1)-2)!}{(k+1)! ((k+1)r-1)!} x^{(k+1)r-1} dx = dF_{0,1}^{\leq} 
	\end{align*}
	(for the last equality we just identify $[\mu]$ with $k+1$).
\end{proof}

\subsubsection{The strictly monotone case}

Similarly, for strictly monotone Hurwitz numbers the $(0,1)$-free energy reads:
\begin{equation}
F_{0,1}^{<}(x)\coloneqq \sum_{\mu=1}^\infty [u^{-1+d/r}]H^{\bullet, r, \leq}(u,\mu) x^{-\mu} - \log(x)
\end{equation}
Again, only $\mu=[\mu]r$, $[\mu]\geq 0$ can contribute non-trivially. We have:
\begin{align*}
& [u^{-1+d/r}]H^{\bullet, r, \leq}(u,\mu) = \frac{(\mu-1)!}{(\mu - [\mu] -1)! [\mu]!} [u^{-1}]  \left\langle \mathcal{A}_{\langle \mu \rangle }(u,\mu) \right\rangle \\
& =  \frac{(\mu-1)!}{(\mu - [\mu] -1)! [\mu]!} (\mu - [\mu ] +2)_{-2} \\
& = \frac{(\mu-1)!}{(\mu - [\mu] +1)! [\mu]!} 
\end{align*}
(here we used in the second line equation~\eqref{eq:FormulaForA-Operatorsigma}, where $t$ and $v$ deliberately must be equal to $0$ and $-1$ respectively). Thus we have (replacing $\mu$ by $r[\mu]$ everywhere):
\begin{equation} \label{zeroonesigma}
dF_{0,1}^{<} = -\frac{1}{x} \sum_{[\mu]=1}^{\infty} \frac{(r[\mu])!}{([\mu]r - [\mu] +1)! [\mu]!} x^{-r[\mu]} dx - \frac{dx}{x}
\end{equation}

\begin{theorem}
	We have: $\omega^{<}_{0,1}\coloneqq d F_{0,1}^{<} = ydx$.
\end{theorem}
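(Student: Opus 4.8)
The plan is to mirror the proof just given in the monotone case. The spectral curve gives $y=z$, so the claim is that the one-form $y\,dx = z\,dx$, expanded as a Laurent series in $x^{-1}$ near $x=\infty$, reproduces $dF^{<}_{0,1}$ as written in \eqref{zeroonesigma}. Since \eqref{zeroonesigma} contains only the powers $x^{-r[\mu]-1}\,dx$ together with the term $-dx/x$ produced by $-\log x$, it is enough to compute $[x^{-\mu}]z$ for all $\mu\geq 1$, to check that it vanishes unless $r\mid \mu-1$, and then to compare coefficients.

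The heart of the argument is a single residue computation, in the same spirit as the proof of the lemma for $\xi_i$. Writing $x=(1+z^r)/z$ and $dx=\frac{(r-1)z^r-1}{z^2}\,dz$, I would extract
\begin{equation}
[x^{-\mu}]\,z = \frac{1}{2\pi i}\oint z\,x^{\mu-1}\,dx = -\Res_{z=0}\frac{(1+z^r)^{\mu-1}\big((r-1)z^r-1\big)}{z^{\mu}}\,dz,
\end{equation}
the extra sign coming from the orientation reversal $x\sim 1/z$ at $z=0$. This residue is the coefficient of $z^{\mu-1}$ in $(1+z^r)^{\mu-1}\big((r-1)z^r-1\big)$, which vanishes unless $\mu=rm+1$; in that case a short simplification of $(r-1)\binom{rm}{m-1}-\binom{rm}{m}$ collapses to $-\frac{(rm)!}{m!\,((r-1)m+1)!}$, so that
\begin{equation}
[x^{-\mu}]\,z = \frac{(rm)!}{m!\,((r-1)m+1)!}, \qquad \mu=rm+1,\quad m\geq 0,
\end{equation}
and $[x^{-\mu}]z=0$ otherwise. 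Equivalently one may obtain this by Lagrange inversion applied to $1/x=z/(1+z^r)$, which is the cleaner route for fixing the sign.

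Multiplying by $dx$ gives $y\,dx=\sum_{m\geq 0}\frac{(rm)!}{m!\,((r-1)m+1)!}\,x^{-rm-1}\,dx$. The terms with $m\geq 1$ match the sum in \eqref{zeroonesigma} after identifying $[\mu]=m$ and accounting for the shift of the exponent by one caused by $d/dx$, while the $m=0$ term $x^{-1}\,dx$ is precisely the contribution $-dx/x$ of the $-\log x$ summand in $F^{<}_{0,1}$; this explains why the logarithm, absent in the monotone case, is forced here by the simple pole of $z$ at $x=\infty$. Matching all coefficients then yields $dF^{<}_{0,1}=y\,dx$. I expect the only genuine difficulty to be the bookkeeping of signs\textemdash the orientation flip in the residue, the sign of the $-\log x$ term, and the overall sign relating $\omega^{<}_{0,1}$ to $y\,dx$ (note that the monotone statement instead carries $-y\,dx$). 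Carrying the computation through Lagrange inversion rather than a bare residue is the safest way to pin these down; once the signs are settled, the binomial simplification and the term-by-term comparison are routine.
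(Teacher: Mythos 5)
Your strategy is the same as the paper's: expand $z$ as a series in $x^{-1}$ near $z=0$ by a residue computation (equivalently, Lagrange inversion), collapse the two binomials, and compare term by term with \eqref{zeroonesigma}, the $m=0$ term being absorbed by the $-\log x$ summand. The binomial collapse is correct: $(r-1)\binom{rm}{m-1}-\binom{rm}{m}=-\frac{(rm)!}{m!\,((r-1)m+1)!}$. However, the sign bookkeeping that you defer as ``routine'' is exactly where the proof as written fails, and it is not routine\textemdash{}it is the entire content of the comparison. With the orientation-reversal sign that you include (correctly, under the standard convention for coefficients of an expansion at $x=\infty$), you obtain
\begin{equation}
z=\sum_{m\ge 0}\frac{(rm)!}{m!\,((r-1)m+1)!}\,x^{-rm-1},
\end{equation}
with \emph{positive} coefficients, as it must be: $x\sim 1/z$ forces $z\sim +1/x$, and Lagrange inversion applied to $1/x=z/(1+z^r)$ confirms it. Hence every coefficient of $y\,dx=z\,dx$ is positive. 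But every coefficient of $dF^{<}_{0,1}$ in \eqref{zeroonesigma} is negative, including the term $-dx/x$. Your assertion that the $m=0$ term $+x^{-1}\,dx$ ``is precisely the contribution $-dx/x$'' is a sign error, and the $m\geq 1$ terms likewise agree only up to a global minus sign. What your computation actually proves is $dF^{<}_{0,1}=-y\,dx$, i.e.\ a series with positive coefficients cannot equal one with negative coefficients, so the final matching step does not close.

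The paper's own proof lands on $+y\,dx$ for the complementary reason: it evaluates $\oint z\,dx/x^{n+1}$ as a bare residue at $z=0$, i.e.\ \emph{without} the orientation flip you insert, which amounts to expanding $z$ with the opposite sign; the same convention underlies the lemma giving the expansion of the $\xi_i$ for the curve $x=z^{r-1}+z^{-1}$ in section~\ref{sec:Correlation}, so within the paper everything is self-consistent. The upshot is that you cannot keep both your extra minus sign in the residue and the claimed equality $\omega^{<}_{0,1}=+y\,dx$: either adopt the paper's convention (coefficient $:=\Res_{z=0}$ in the $z$-coordinate, no flip), drop your minus sign, and your comparison then closes verbatim; or keep the standard convention, in which case the identity you end up proving is $\omega^{<}_{0,1}=-y\,dx$ (in line with the monotone case). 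As written, your proposal mixes the two conventions, and the resulting sign clash is a genuine gap rather than bookkeeping to be settled later.
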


\begin{proof} The spectral curve reads $x = z^{r-1} + z^{-1}$ and $y=z$. Let us expand $z = \sum_{n=0}^{\infty} a_n x^n$ and compute the coefficients by 
	\begin{equation}
	a_n = \oint z \frac{dx}{x^{n+1}} = - \oint [1 - (r-1)z^r] z^n \sum_{j=0} \binom{n + j}{j} (-z^r)^j dz\\
	\end{equation}
This residue is nontrivial only for $n = -rj - 1, \, j \leq 0$, hence we should extract in the two summands the $j$-th and the $(j-1)$-st term respectively. Therefore, the residue reads
\begin{align}
&(-1)^{j-1} \left[ \binom{-rj -1 +j}{j} + (r-1)\binom{-rj - 1 + j -1}{j-1} \right]\\ =& (-1)^{j} \binom{-rj -1 +j}{j}\frac{1}{(-rj +j -1)}
\end{align}
Hence
	\begin{align*}
	 ydx = zdx &= \sum_{j=0}^\infty (-1)^{j} \binom{-rj -1 +j}{j}\frac{1}{(-rj +j -1)} x^{-jr-1} dx\\
	 &=-\frac{1}{x} \sum_{j=0}^\infty (-1)^j \frac{(-rj)_j}{j! (rj -j +1)} x^{-jr} dx \\
	& = -\frac{1}{x} \sum_{j=0}^\infty \frac{(rj)!}{j! (rj - j +1)!} x^{-jr} dx = dF^{<}_{0,1} 
	\end{align*}
where, in order to obtain the last line, we collected the minus signs from the Pochhammer symbol. For the last equality we identify $[\mu]$ with $j$ and incorporate the term $[\mu] = 0$ inside the sum in formula  \eqref{zeroonesigma}.
\end{proof}

\subsection{The case \texorpdfstring{$ (g,n)=(0,2)$}{(g,n)=(0,2)}} In this section we use equation~\eqref{eq:FormulaForGeneratingFunction} in order to check whether the holomorphic part of the expansion of the unique genus zero Bergman kernel gives the differential $d_1d_2 F^{\leq}_{0,2}$. More precisely, we prove the following theorem:

\begin{theorem}
	We have:
	\begin{equation}
	\frac{dz_1dz_2}{(z_1-z_2)^2} = \frac{dx_1dx_2}{(x_1-x_2)^2}+ d_1d_2 F^{\leq}_{0,2}(x_1,x_2)
	\end{equation}
\end{theorem}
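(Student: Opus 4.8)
The plan is to reduce the bidifferential identity to a scalar coefficient comparison and then compute the two sides by the semi-infinite wedge formalism and by residues, respectively. First I would note that the diagonal double poles of $\frac{dz_1dz_2}{(z_1-z_2)^2}$ and $\frac{dx_1dx_2}{(x_1-x_2)^2}$ cancel, since $x=z+O(z^{r+1})$ near $z=0$, so the difference is holomorphic near $x_1=x_2=0$. The theorem is then equivalent to the family of scalar identities
\begin{equation*}
\mu_1\mu_2\,h^{\circ,r,\leq}_{0;\mu_1,\mu_2}=\frac{1}{(2\pi i)^2}\oint\!\oint x_1^{-\mu_1}x_2^{-\mu_2}\left[\frac{dz_1dz_2}{(z_1-z_2)^2}-\frac{dx_1dx_2}{(x_1-x_2)^2}\right],\qquad \mu_1,\mu_2\geq 1,
\end{equation*}
taken with nested contours $|x_2|<|x_1|$.

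For the left-hand side I would express $h^{\circ,r,\leq}_{0;\mu_1,\mu_2}$ through the $\mathcal{A}$-operators, using the connected-correlator formula given before Theorem~\ref{Poly} (equivalently Proposition~\ref{AhOper} with inclusion-exclusion), so that it equals $\binom{\mu_1+[\mu_1]}{\mu_1}\binom{\mu_2+[\mu_2]}{\mu_2}$ times $[u^0]$ of the connected two-point correlator. The key tool is the two-point connected $\mathcal{E}$-correlator obtained from~\eqref{eq:commE}: for $a>0$ and auxiliary variables $w_1,w_2$,
\begin{equation*}
\corc{\mathcal{E}_a(w_1)\mathcal{E}_{-a}(w_2)}=\frac{\zeta(a(w_1+w_2))}{\zeta(w_1+w_2)}.
\end{equation*}
Because the bracketed part of~\eqref{eq:FormulaForA-Operator} is a power series in the products $uz$, extracting $[u^0]$ forces $v_i=t_i$ in both operators, every $\mathcal{S}$-factor degenerates to $1$, and the $\mathcal{E}$-correlator degenerates to its constant term $a$. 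This collapses the double sum to a single finite sum over the admissible energy $a=t_1r-\langle\mu_1\rangle>0$, with $t_2r-\langle\mu_2\rangle=-a$, of the Pochhammer ratios $([\mu_i]+\mu_i+1)_{t_i-1}/([\mu_i]+1)_{t_i}$.

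For the right-hand side I would pass to the $z$-coordinate via $x=z(1-z^r)$, $dx=(1-(r+1)z^r)dz$, expand $(z_1-z_2)^{-2}=\sum_{k\geq0}(k+1)z_2^kz_1^{-k-2}$, and take residues using $(1-z_i^r)^{-\mu_i}=\sum_{p}\binom{\mu_i+p-1}{p}z_i^{rp}$. The Bergman term then yields $\sum a\,\binom{\mu_1+p-1}{p}\binom{\mu_2+q-1}{q}$ with $a=k+1=rp-\mu_1$ and $r(p+q)=\mu_1+\mu_2$, while the subtracted $\frac{dx_1dx_2}{(x_1-x_2)^2}$ contributes nothing, since on $|x_2|<|x_1|$ its expansion carries only negative powers of $x_1$.

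The final matching is the elementary factorial identity
\begin{equation*}
\binom{\mu+[\mu]}{\mu}\frac{([\mu]+\mu+1)_{t-1}}{([\mu]+1)_t}=\frac{1}{\mu}\binom{\mu+P-1}{P},\qquad P:=[\mu]+t,
\end{equation*}
applied to each factor with $P_1=(\mu_1+a)/r$ and $P_2=(\mu_2-a)/r$, together with the coincidence $a=k+1$ of the two weights; these turn the left-hand side into exactly the residue sum. I expect the main obstacle to lie not in any deep identity but in the careful bookkeeping around the unstable $(0,2)$ geometry: verifying the degeneration of all $\mathcal{S}$-factors at $[u^0]$, checking that the two ranges of summation over $a$ agree (including the boundary cases $\langle\mu_i\rangle=0$ and $a=\mu_2$, where the Pochhammer symbols are read by analytic continuation), and justifying the contour orderings used to expand the two double poles.
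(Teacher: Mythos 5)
Your proposal is correct, and it takes a genuinely different route from the paper on the spectral-curve side of the identity. The Hurwitz side is treated the same way in both: extracting $[u^0]$ of the connected two-point $\mathcal{A}$-correlator forces $v_i=t_i$, degenerates every $\mathcal{S}$-factor to $1$ and the $\mathcal{E}$-correlator to its constant term $a$ (the paper phrases this as $[\zeta_1^0\zeta_2^0]\left\langle\mathcal{E}_v(\zeta_1)\mathcal{E}_{-v}(\zeta_2)\right\rangle^\circ=v$), yielding a finite sum over the exchanged energy $a$; your Pochhammer-to-binomial rewriting and the summation ranges (with $a>0$ forced by the covacuum and $P_2\geq 0$ forced by the vanishing of $1/([\mu_2]+1)_{t_2}$, including both Case I and the Case II where $r\mid\mu_1,\mu_2$ and the $a=0$ term drops out of the connected correlator) all check out. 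Where you diverge is the geometric side: the paper reduces the theorem to $\log(z_1-z_2)=\log(x_1-x_2)+F^{\leq}_{0,2}+C_1(x_1)+C_2(x_2)$, applies the Euler operator $E=x_1\partial_{x_1}+x_2\partial_{x_2}$, computes $E\log(z_1-z_2)$ in closed form via the expansions of lemma~\ref{xih}, and must then prove the telescoping binomial identities \eqref{eq:TechnicalEquationProof02} and \eqref{eq:TechnicalEquationProof03} by repeated use of Pascal's rule, finishing with an argument that nothing lies in the kernel of $E$. You instead compare Taylor coefficients of the bidifferentials directly: the double residue of $x_1^{-\mu_1}x_2^{-\mu_2}\,dz_1dz_2/(z_1-z_2)^2$ gives $\sum_a a\binom{\mu_1+p-1}{p}\binom{\mu_2+q-1}{q}$ with $rp=\mu_1+a$, $rq=\mu_2-a$, which matches the Hurwitz-side sum \emph{term by term}, so no summation identity is needed at all. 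Your route is shorter and structurally more transparent (both sides are convolutions over the exchanged energy $a$), and as a bonus it shows the paper's telescoping identities are avoidable; what the paper's route buys is that it stays entirely within formal power-series manipulations, never invoking holomorphy of $B(z_1,z_2)-B(x_1,x_2)$ across the diagonal or contour orderings — the one place where your argument is slightly terse, since cancelling the double poles leaves a potential simple pole on the diagonal, which must be ruled out by the symmetry of the bidifferential before the coefficient-matching reduction is a genuine equivalence.
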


\begin{proof} It is sufficient to prove that 
\begin{equation}\label{eq:2pointIdentityGenFunctLog}
\log(z_1-z_2) = \log(x_1-x_2) +F_{0,2}(x_1,x_2)+C_1(x_1)+C_2(x_2)
\end{equation}
where $C_1,C_2$ are some functions of one variable. 

We apply the Euler operator 
\begin{equation}
E\coloneqq x_1\frac{\partial}{\partial x_1}+x_2\frac{\partial}{\partial x_2}
\end{equation}
to both sides of this formula. Using that $\partial_x = (1-(r+1)z^r)^{-1} \partial_z$, we observe that in the coordinates $z_1,z_2$ the Euler operator has the form 
\begin{equation}
E\coloneqq\frac{1-z_1^r}{1-(r+1) z_1^r}\cdot z_1\frac{\partial}{\partial z_1}+
\frac{1-z_2^r}{1-(r+1) z_2^r}\cdot z_2\frac{\partial}{\partial z_2}
\end{equation}
We have:
\begin{align}
 E \log(z_1-z_2) &= 1 + r\cdot \frac{z_1^{r}+z_1^{r-1}z_2+\cdots + z_2^r  (r+1)z_1^rz_2^r}{(1-(r+1) z_1^r)(1-(r+1) z_2^r)} 
\\ \notag 
&= 1+ r \frac{\partial^2}{\partial x_1 \partial x_2} 
\left(\frac{z_1^{r+1}z_2}{(r+1)\cdot 1}+\frac{z_1^{r}z_2^2}{r\cdot 2}+\cdots + \frac{z_1z_2^{r+1}}{1\cdot(r+1)} - \frac{z_1^{r+1}z_2^{r+1}}{r+1}\right)\\ \notag
&=1+\frac{r}{r+1} \frac{\partial^2}{\partial x_1 \partial x_2}(z_1z_2-x_1x_2) + r \frac{\partial^2}{\partial x_1 \partial x_2} \Big( \frac{z_1^rz_2^2}{r\cdot 2}+\cdots + \frac{z_1^2z_2^r}{2\cdot r} \Big)
\end{align}
Using equation~\eqref{eq:Equationzi}, we finally obtain the following formula for $E\log(z_1-z_2)$:
\begin{equation}\label{eq:FinalFormulaEulerLHS-1}
r\cdot \sum_{\substack{i_1,i_2=1\\ i_1+i_2=r}}^{r-1}\sum_{k_1,k_2=0}^\infty 
\frac{(k_1r+k_1+i_1)!}{k_1! (k_1r+i_1)!}\frac{(k_2r+k_2+i_2)!}{k_2! (k_2r+i_2)!} 
x_1^{k_1r+i_1} x_2^{k_2r+i_2}
\end{equation}
for the degrees of $x_1,x_2$ not divisible by $r$ (Case I), and 
\begin{align}\label{eq:FinalFormulaEulerLHS-2}
\notag \frac{1}{r+1}+\frac{r}{r+1} \sum_{k_1,k_1=0}^\infty \binom{k_1r+ k_1}{k_1} \binom{k_2r+k_2}{k_2} x_1^{k_1r} x_2^{k_2r} \\
= 1 + \frac{r}{r+1} \sum_{\substack{k_1,k_1=0\\ (k_1,k_2)\neq(0,0)}}^\infty \binom{k_1r+ k_1}{k_1} \binom{k_2r+k_2}{k_2} x_1^{k_1r} x_2^{k_2r}
\end{align}
if one of the exponents, and, therefore, both of them, are divisible by $r$ (Case II).

Now we apply the Euler operator $E$ to the right hand side of equation~\eqref{eq:2pointIdentityGenFunctLog}. We obtain the following expression:
\begin{equation}
1+\tilde C_1(x_1)+\tilde C_2(x_2)+\sum_{\substack{\mu_1,\mu_2\geq 1\\r | (\mu_1+\mu_2)}} 
h^{\circ,r,\leq}_{0; (\mu_1,\mu_2)} x_1^{\mu_1} x_2^{\mu_2} (\mu_1+\mu_2)
\end{equation}
We have to prove that the sum of equations~\eqref{eq:FinalFormulaEulerLHS-1} and~\eqref{eq:FinalFormulaEulerLHS-2} is equal to this expression.

Let us compute  $h^{\circ,r,\leq}_{0; (\mu_1,\mu_2)}$. Equation~\eqref{eq:FormulaForGeneratingFunction} implies that 
\begin{equation}
h^{\circ,r,\leq}_{0; (\mu_1,\mu_2)} = \binom{\mu_1+[\mu_1]}{\mu_1} \binom{\mu_2+[\mu_2]}{\mu_2} \cdot \left\langle 
\mathcal{A}_{\langle\mu_1\rangle}(u,\mu_1) 
\mathcal{A}_{\langle\mu_2\rangle}(u,\mu_2)
\right\rangle^\circ
\end{equation}
Since we have to use connected correlators, it implies that in the $\mathcal{A}_{\langle\mu_1\rangle}$-operator we have to take only the operators $\mathcal{E}$ with the positive indices, and in the $\mathcal{A}_{\langle\mu_2\rangle}$-operator we have to take only the operators $\mathcal{E}$ with the negative indices. Specializing the formula further, and using that $[\zeta_1^0\zeta_2^0] \left\langle \mathcal{E}_{v}(\zeta_1) \mathcal{E}_{-v}(\zeta_2) \right\rangle^\circ = v$, we have:
\begin{equation}
h^{\circ,r,\leq}_{0; (\mu_1,\mu_2)} 
= \sum_{t=1}^{[\mu_2]+1} \frac{(\mu_1+[\mu_1]+t-1)!}{\mu_1!([\mu_1]+t)!} (tr-\langle \mu_1 \rangle) \frac{(\mu_2+[\mu_2]-t)!}{\mu_2!([\mu_2]+1-t)!}
\end{equation}
in Case I, and
\begin{equation}
h^{\circ,r,\leq}_{0; (\mu_1,\mu_2)} 
= \sum_{t=1}^{[\mu_2]} \frac{(\mu_1+[\mu_1]+t-1)!}{\mu_1!([\mu_1]+t)!} \cdot tr\cdot \frac{(\mu_2+[\mu_2]-t-1)!}{\mu_2!([\mu_2]-t)!}
\end{equation}
in Case II. Note that in Case II, we omit the contributions from the \( t=0\) part, as it cancels the strictly diconnected correlator in the inclusion-exclusion formula. \par
So, in order to complete the proof of the theorem we have to show that 
\begin{align} \label{eq:TechnicalEquationProof02}
 (\mu_1+\mu_2) &\sum_{t=1}^{[\mu_2]+1} \frac{(\mu_1+[\mu_1]+t-1)!}{\mu_1!([\mu_1]+t)!} (tr-\langle \mu_1 \rangle) \frac{(\mu_2+[\mu_2]-t)!}{\mu_2!([\mu_2]+1-t)!} \\ \notag
& = r\cdot \binom{\mu_1+[\mu_1]}{\mu_1} \binom{\mu_2+[\mu_2]}{\mu_2}
\end{align}
in Case I (cf. equation~\eqref{eq:FinalFormulaEulerLHS-1}) and
\begin{align}\label{eq:TechnicalEquationProof03}
 (\mu_1+\mu_2) &\sum_{t=1}^{[\mu_2]} \frac{(\mu_1+[\mu_1]+t-1)!}{\mu_1!([\mu_1]+t)!} \cdot t\cdot \frac{(\mu_2+[\mu_2]-t-1)!}{\mu_2!([\mu_2]-t)!} \\ \notag
& = \frac{1}{r+1} \cdot \binom{\mu_1+[\mu_1]}{\mu_1} \binom{\mu_2+[\mu_2]}{\mu_2}
\end{align}
in Case II. \par
Let us show this for Case I first. Observe that $tr-\langle \mu_1\rangle = ([\mu_1]+t)r-\mu_1$ and $\mu_1+\mu_2=([\mu_1]+[\mu_2]+1)r$, so we can rewrite the left hand side of equation~\eqref{eq:TechnicalEquationProof02} as 
\begin{align}
& r\cdot (\mu_1+\mu_2)\cdot \sum_{t=1}^{[\mu_2]+1} \frac{(\mu_1+[\mu_1]+t-1)!}{\mu_1!([\mu_1]+t-1)!}  \frac{(\mu_2+[\mu_2]-t)!}{\mu_2!([\mu_2]+1-t)!}
\\ \notag 
& - r\cdot ([\mu_1]+[\mu_2]+1)\cdot \sum_{t=1}^{[\mu_2]+1} \frac{(\mu_1+[\mu_1]+t-1)!}{(\mu_1-1)!([\mu_1]+t)!}  \frac{(\mu_2+[\mu_2]-t)!}{\mu_2!([\mu_2]+1-t)!}
\end{align}
Let us omit the factor $r$ since we have it in the right hand side of equation~\eqref{eq:TechnicalEquationProof02}. Let us multiply the first summand by $\mu_1$ and the second summand by $([\mu_1]+t)$. We get identical sums with the opposite signs. So, this expression divided by $r$ is equal to 
\begin{align}
& \sum_{t=1}^{[\mu_2]+1} \frac{(\mu_1+[\mu_1]+t-1)!}{\mu_1!([\mu_1]+t-1)!}  \frac{(\mu_2+[\mu_2]-t)!}{(\mu_2-1)!([\mu_2]+1-t)!}
\\ \notag 
& - \sum_{t=1}^{[\mu_2]} \frac{(\mu_1+[\mu_1]+t-1)!}{(\mu_1-1)!([\mu_1]+t)!}  \frac{(\mu_2+[\mu_2]-t)!}{\mu_2!([\mu_2]-t)!} \\ \notag
& =: \sum_{t=1}^{[\mu_2]+1} A_t - \sum_{t=1}^{[\mu_2]} B_t
\end{align}
We can reshuffle the summands in this expression in the following way:
\begin{equation}
A_{[\mu_2]+1} - B_{[\mu_2]}+ A_{[\mu_2]} - B_{[\mu_2]-1}+\cdots + A_{2} - B_{1}+A_1
\end{equation}
Now we add up term by term, starting at the left. First we get
\begin{align*}
A_{[\mu_2]+1} - B_{[\mu_2]} & = \binom{\mu_1+[\mu_1]+[\mu_2]}{\mu_1} \binom{\mu_2}{\mu_2} - \binom{\mu_1+[\mu_1]+[\mu_2]-1}{\mu_1-1} \binom{\mu_2}{\mu_2}
\\
&  = \binom{\mu_1+[\mu_1]+[\mu_2]-1}{\mu_1} \binom{\mu_2}{\mu_2}
\end{align*}
Iterating this, get get the following sequence of expressions:
\begin{align}
 A_{[\mu_2]+1}& - B_{[\mu_2]} + A_{[\mu_2]} \\
& = \binom{\mu_1+[\mu_1]+[\mu_2]-1}{\mu_1} \binom{\mu_2}{\mu_2} + 
\binom{\mu_1+[\mu_1]+[\mu_2]-1}{\mu_1} \binom{\mu_2}{\mu_2-1}\\
& = \binom{\mu_1+[\mu_1]+[\mu_2]-1}{\mu_1} \binom{\mu_2+1}{\mu_2}\\
A_{[\mu_2]+1}& - B_{[\mu_2]} + A_{[\mu_2]} - B_{[\mu_2]-1} \\
& = \binom{\mu_1+[\mu_1]+[\mu_2]-1}{\mu_1} \binom{\mu_2+1}{\mu_2}
- \binom{\mu_1+[\mu_1]+[\mu_2]-2}{\mu_1-1} \binom{\mu_2+1}{\mu_2} \\
& = \binom{\mu_1+[\mu_1]+[\mu_2]-2}{\mu_1} \binom{\mu_2+1}{\mu_2}
\end{align}
eventually ending up at
\begin{align}
&  A_{[\mu_2]+1} - B_{[\mu_2]} + \cdots + A_1 = \binom{\mu_1+[\mu_1]}{\mu_1} \binom{\mu_2+[\mu_2]}{\mu_2}
\end{align}
which gives us equation~\eqref{eq:TechnicalEquationProof02}.\par

In Case II, the computation is similar. Observe that $t = ([\mu_1]+t)-\mu_1/r$ and $(\mu_1+\mu_2)/r=[\mu_1]+[\mu_2]$, so we can rewrite the left hand side of equation~\eqref{eq:TechnicalEquationProof03} in the following way:
\begin{align}
& (\mu_1+\mu_2)\cdot \sum_{t=1}^{[\mu_2]} \frac{(\mu_1+[\mu_1]+t-1)!}{\mu_1!([\mu_1]+t-1)!}  \frac{(\mu_2+[\mu_2]-t-1)!}{\mu_2!([\mu_2]-t)!}
\\ \notag 
& - ([\mu_1]+[\mu_2])\cdot \sum_{t=1}^{[\mu_2]} \frac{(\mu_1+[\mu_1]+t-1)!}{(\mu_1-1)!([\mu_1]+t)!}  \frac{(\mu_2+[\mu_2]-t-1)!}{\mu_2!([\mu_2]-t)!}
\end{align}
Again, if we multiply the first summand by $\mu_1$ and the second summand by $([\mu_1]+t)$, this yields identical sums with opposite signs. Cancelling these terms, we get that this expression is equal to 
\begin{align}
& \sum_{t=1}^{[\mu_2]} \binom{\mu_1 + [\mu_1 ] + t -1}{\mu_1} \binom{\mu_2 + [\mu_2 ]-t-1}{\mu_2 -1}
\\ \notag 
& - \sum_{t=1}^{[\mu_2]-1} \binom{\mu_1 + [\mu_1 ] + t -1}{\mu_1-1} \binom{\mu_2 + [\mu_2 ]-t-1}{\mu_2} \\
& =: \sum_{t=1}^{[\mu_2]} A'_t - \sum_{t=1}^{[\mu_2]-1} B'_t
\end{align}
Reshuffling the summands in this expression in the same way as for Case I, we would now get
\begin{equation}
A'_{[\mu_2]} - B'_{[\mu_2]-1}+ A'_{[\mu_2]-1} - B'_{[\mu_2]-2}+\cdots + A'_{2} - B'_{1}+A'_1
\end{equation}
We will calculate this in the same way as before: we start at the right and at the next term one at a time. First we get
\begin{align*}
A'_{[\mu_2]} - B'_{[\mu_2]-1} & = \binom{\mu_1+[\mu_1]+[\mu_2]-1}{\mu_1} \binom{\mu_2}{\mu_2} - \binom{\mu_1+[\mu_1]+[\mu_2]-2}{\mu_1-1} \binom{\mu_2}{\mu_2}
\\
&  = \binom{\mu_1+[\mu_1]+[\mu_2]-2}{\mu_1} \binom{\mu_2}{\mu_2}
\end{align*}
Iterating this, the next few calculations give us the following result:
\begin{align*}
& A'_{[\mu_2]} - B'_{[\mu_2]-1} + A'_{[\mu_2]-1} \\
& = \binom{\mu_1+[\mu_1]+[\mu_2]-2}{\mu_1} \binom{\mu_2}{\mu_2} + 
\binom{\mu_1+[\mu_1]+[\mu_2]-2}{\mu_1} \binom{\mu_2}{\mu_2-1}\\ 
& = \binom{\mu_1+[\mu_1]+[\mu_2]-2}{\mu_1} \binom{\mu_2+1}{\mu_2}\\
& A'_{[\mu_2]} - B'_{[\mu_2]-1} + A'_{[\mu_2]-1} - B'_{[\mu_2]-2} \\
& = \binom{\mu_1+[\mu_1]+[\mu_2]-2}{\mu_1} \binom{\mu_2+1}{\mu_2}
- \binom{\mu_1+[\mu_1]+[\mu_2]-3}{\mu_1-1} \binom{\mu_2+1}{\mu_2} \\
& = \binom{\mu_1+[\mu_1]+[\mu_2]-3}{\mu_1} \binom{\mu_2+1}{\mu_2}
\end{align*}
And finally we get the following result:
\begin{align*}
  A'_{[\mu_2]} - B'_{[\mu_2]-1} + \cdots + A'_1 &= \binom{\mu_1+[\mu_1]}{\mu_1} \binom{\mu_2+[\mu_2]-1}{\mu_2} \\
&= \frac{1}{r+1}\binom{\mu_1+[\mu_1]}{\mu_1} \binom{\mu_2+[\mu_2]}{\mu_2}
\end{align*}
which gives us equation~\eqref{eq:TechnicalEquationProof03}.\par
This way we prove equation~\eqref{eq:2pointIdentityGenFunctLog} is satisfied up to the kernel of the Euler operator. Since neither the left hand side nor the right hand side of equation~\eqref{eq:2pointIdentityGenFunctLog} contain the terms in the kernel of the Euler operator, we see that equation~\eqref{eq:2pointIdentityGenFunctLog} is satisfied, and this completes the proof of the theorem.
\end{proof}



\end{document}